\definecolor{sepia}{cmyk}{0, 0.83, 1, 0.70}
\newtheorem{theorem}{Theorem}
\newtheorem{corollary}[theorem]{Corollary}
\newtheorem{fact}[theorem]{Fact}
\newtheorem{lemma}[theorem]{Lemma}
\theoremstyle{definition}
\newtheorem{definition}[theorem]{Definition}
\newtheorem{example}[theorem]{Example}
\newtheorem{remark}[theorem]{Remark}
\numberwithin{equation}{section} 
\numberwithin{theorem}{section}  
\renewenvironment{proof}[1][\proofname]
{\par
	\pushQED{$\blacksquare$} 
	\normalfont\topsep6\p@\@plus6\p@\relax
	\trivlist
	\item[\hskip\labelsep\bfseries#1\@addpunct{.}]
	\ignorespaces}
{\popQED \endtrivlist\@endpefalse}
\DeclareMathOperator*{\fix}{Fix}
\DeclareMathOperator*{\spn}{span}
\DeclareMathOperator*{\rank}{rank}
\begin{document}

\title{\vspace{-5em} \textbf{\Large Comparing the Methods of Alternating and Simultaneous Projections for Two Subspaces}  }


\author{Simeon Reich\thanks{Department of Mathematics,
    The Technion -- Israel Institute of Technology, 3200003 Haifa, Israel;
    E-mail: \texttt{sreich@technion.ac.il}. } \and
    Rafa\l\ Zalas\thanks{Department of Mathematics,
    The Technion -- Israel Institute of Technology, 3200003 Haifa, Israel;
    E-mail: \texttt{rafalz@technion.ac.il}. }
}

\maketitle

\begin{abstract}
  We study the well-known methods of alternating and simultaneous projections when applied to two nonorthogonal linear subspaces of a real Euclidean space. Assuming that both of the methods have a common starting point chosen from either one of the subspaces, we show that the method of alternating projections converges significantly faster than the method of simultaneous projections. On the other hand, we provide examples of subspaces and starting points, where the method of simultaneous projections outperforms the method of alternating projections.

  \bigskip
  \noindent \textbf{Key words and phrases:} Friedrichs angle; Principal angles; Rates of convergence;

  \bigskip
  \noindent \textbf{2010 Mathematics Subject Classification:} 41A25, 41A28, 41A44, 41A65.
\end{abstract}

\section{Introduction}
The best approximation problem for two linear subspaces $A$ and $B$ of $\mathbb R^d$ is to find a point in the intersection $A \cap B$ which is the closest to any given starting point $x \in \mathbb R^d$. Even though the setting of two linear subspaces is very limited, its theoretical study still attracts a lot of attention in both finite and infinite dimensional Hilbert spaces; see, for example, \cite{ArtachoCampoy2019, BauschkeCruzNghiaPhanWang2014, BauschkeCruzNghiaPhanWang2016, BauschkeDeutschHundalPark2003, BehlingCruzSantos2018, FaltGiselsson2017, KayalarWeinert1988, ReichZalas2017} and \cite{Deutsch2001, Galantai2004}. In the context of Banach spaces, see also the recent papers \cite{BargetzKlemencReichSkorokhod2020, BargetzLuggin2023}. For a more detailed overview on the best approximation problem for two or more closed and convex sets and the related convex feasibility problem, we refer the reader to \cite{BauschkeBorwein1996, BauschkeCombettes2017, Cegielski2012, Popa2012}  and \cite{CensorCegielski2015}.

We focus on the Method of Alternating Projections (MAP):
\begin{equation}\label{int:ak}
  a_0 \in \mathbb R^d, \qquad
  a_k:= (P_AP_B)^k (a_0), \qquad
  k=1,2,\ldots,
\end{equation}
and on the Method of Simultaneous Projections (MSP):
\begin{equation}\label{int:xk}
  x_0 \in \mathbb R^d, \qquad
  x_k := \left(\frac{P_A+P_B}{2}\right)^k(x_0), \qquad k=1,2,\ldots,
\end{equation}
where by $P_A$ and $P_B$ we denote the orthogonal projection onto $A$ and $B$, respectively. We recall that these methods converge to $P_{A\cap B}(a_0)$ \cite{Halperin1962, KopeckaReich2004, Neumann1949} and to $P_{A \cap B}(x_0)$ \cite{Lapidus1981, Reich1983}, respectively.  In this paper we try to determine for which starting points $a_0 = x_0$ the MAP is faster than the MSP and when the opposite happens. For this reason we analytically compare the error terms $\|a_k - P_{A \cap B}(a_0)\|$ and $\|x_k - P_{A \cap B}(x_0)\|$, as well as their rates of $Q$-linear convergence;  see Definition \ref{def:Qlinear}.

In order to avoid the trivial case, where the MAP converges in one iteration for all starting points, we assume  that the subspaces are ``nonorthogonal'',  that is, the \emph{Friedrichs angle} $\theta_F = \theta_F(A,B) \in [0, \frac \pi 2)$; see Definition \ref{def:Friedrichs}.  The nonorthogonality condition has a few equivalent forms that will be interchangeably used in the sequel; see Remark \ref{rem:cosANDlambda} and Remark \ref{rem:assumption}.  Since we only consider the Euclidean space $\mathbb R^d$, we can also deduce that $\theta_F > 0$, so that $\cos(\theta_F) \in (0,1)$; see  Remark \ref{rem:parallel}.  We emphasize that for infinite dimensional Hilbert spaces, it might happen that $\cos(\theta_F) = 1$; see, for example, \cite[Example on p. 464]{PustylnikReichZaslavski2010}. We refer the interested reader to \cite{BadeaGrivauxMuller2011, BauschkeBorweinLewis1997, BauschkeDeutschHundal2009, BorodinKopecka2020, DeutschHundal2015, ReichZalas2021, ReichZalas2023}, which discuss such a scenario in more detail.

It is known that the Friedrichs angle determines the (optimal) rates of $R$-linear convergence for the MAP and the MSP, which are given by $\cos^2(\theta_F)$ and $\frac 1 2 + \frac 1 2\cos(\theta_F)$, respectively;  see Definition \ref{def:Rlinear}.  In fact, for the MAP \cite{KayalarWeinert1988}, we have
\begin{equation}\label{int:KW88:eq}
  \|(P_AP_B)^k-P_{A\cap B}\| = \cos^{2k-1}(\theta_F),
\end{equation}
while for the MSP \cite{ReichZalas2017}, we have
\begin{equation} \label{int:RZ17:eq} \textstyle
  \left\|\left(\frac{P_A+P_B} 2\right)^k-P_{A\cap B}\right\| = \left(\frac 1 2 + \frac 1 2\cos(\theta_F)\right)^{k},
\end{equation}
where $k = 1, 2, \ldots$. Note, however, that the above-mentioned rates do not determine the behavior of the particular trajectories $\{a_k\}_{k=0}^\infty$ and $\{x_k\}_{k=0}^\infty$, but rather inform us of the worst-case scenario. In fact, as it will be demonstrated in this paper, when $\theta_F < \frac \pi 3$, we can always find a starting point $a_0 = x_0$ for which the MSP outperforms the MAP in spite of the inequality $\cos^2(\theta_F) < \frac 1 2 + \frac 1 2\cos(\theta_F)$. We return to this below.

This suggests that the information on the Friedrichs angle is not sufficient and that one has to consider other means of describing the relative positions between the subspaces. For this reason, we recall the concept of the \emph{principal angles} $\theta_n \in [0,\frac \pi 2]$ which are recursively defined for $n = 1, \ldots,p$, where $p := \min\{\dim A, \dim B\}$; see, for example, \cite{BjorckGolub1973, Galantai2004} and Definition \ref{def:principal}. The principal angles are increasing and the Friedrichs angle $\theta_F$ happens to be the first nonzero principal angle $\theta_f$, where $f := \dim(A \cap B)+1$. As it turns out \cite{Galantai2008, GalantaiHegedus2006}, for $r := \rank(P_BP_AP_B)$, the scalars $\lambda_n := \cos^2(\theta_n)$, $n = f, \ldots, r$, are the only eigenvalues in $(0,1)$ of the product of projections $P_BP_AP_B$, while the scalars $\mu_n^{+} := \frac 1 2 + \frac 1 2 \cos(\theta_n)$ and $\mu_n^{-} := \frac 1 2 - \frac 1 2 \cos(\theta_n)$, $n = f, \ldots, r$, are the only eigenvalues in $(0,1) \setminus\{\frac 1 2\}$ of the averaged projections $\frac{P_A+P_B}{2}$; see, for example, \cite{BjorstadMandel1991, BottcherSpitkovsky2010, Galantai2008, KnyazevJujunashviliArgentati2010}. We comment on this in Sections \ref{sec:angles} and \ref{sec:eigenvalues}, where we also reestablish these properties.

We note in passing that the Friedrichs angle, as well as the principal angles, play an important role in determining the rate of $R$-linear convergence for other projection methods; see, for example, \cite{ArtachoCampoy2019, BauschkeCruzNghiaPhanWang2014, BauschkeCruzNghiaPhanWang2016, BauschkeDeutschHundalPark2003, BehlingCruzSantos2018, FaltGiselsson2017}. On the other hand, when the projections are nonorthogonal, the above-mentioned angles might not be sufficient \cite{BargetzKlemencReichSkorokhod2020}.

Our interest in the study of the principal angles lies in the fact that the rate of $Q$-linear convergence $\lambda \in (0,1)$ for the sequence $\{\|a_k - P_{A \cap B}(a_0)\|\}_{k=0}^\infty$ happens to be the largest eigenvalue of $P_BP_AP_B$ in $(0,1)$ for which the projection of the point $a_0$ onto the eigenspace $\mathcal N(\lambda - P_BP_AP_B)$ is nonzero. Similarly, the rate of $Q$-linear convergence $\mu \in (0,1)$ for $\{\|x_k - P_{A \cap B}(x_0)\|\}_{k=0}^\infty$ is the eigenvalue of $\frac{P_A+P_B}{2}$ with an analogous property; see Theorem \ref{thm:main}. Even though we precisely identify $\lambda$ and $\mu$ and their dependence on the starting point $a_0 = x_0$, these rates cannot be easily compared in general, and the comparison of the error terms $\|a_k - P_{A \cap B}(a_0)\|$ and $\|x_k - P_{A \cap B}(x_0)\|$ is also not trivial.

\bigskip
The \textbf{main contributions} of our paper are as follows:

\begin{enumerate}[(C1)]
  \item We show that when the starting point $a_0=x_0$ is chosen from either one of the subspaces, then the above-mentioned rates of $Q$-linear convergence satisfy the identity $\mu = \frac 1 2 + \frac 1 2 \sqrt \lambda$. This implies that $\mu > \lambda$. We also demonstrate that in this case the error terms satisfy the inequality $\|x_k - P_{A \cap B}(x_0)\| > \|a_k - P_{A \cap B}(a_0)\|$ for all $k = 1,2,\ldots$. For details, see Theorem \ref{thm:main2}.

  \item In contrast to Theorem \ref{thm:main2}, we show that when the Friedrichs angle $\theta_F < \frac \pi 3$, we can always choose a starting point $a_0 = x_0$ for which the rates satisfy the opposite inequality $\mu < \lambda$. In this case we also have $\|x_k - P_{A \cap B}(x_0)\| < \|a_k - P_{A \cap B}(a_0)\|$ for all $k = 1,2,\ldots$. We comment on this in Remark \ref{rem:cos12}.

  \item Surprisingly, when $\theta_F > \frac \pi 3$, the situation described in (C2) cannot happen and for every starting point $a_0 = x_0 \in \mathbb R^d$ (for which the MAP does not converge in one iteration), we obtain $\lambda < \mu$. We also comment on this in Remark \ref{rem:cos12}.
\end{enumerate}

In order to justify the statements (C1)--(C3), we have conducted a very detailed spectral analysis of the operators $P_BP_AP_B$ and $\frac{P_A+P_B}{2}$. In particular, we provided numerous projection formulas onto their eigenspaces. This allowed us not only to recover the above-mentioned relation between $\lambda_n$, $\mu_n^+$ and $\mu_n^-$, but also to extend this relation to certain subsets of the eigenvalues; see Remark \ref{rem:multiplicity} and Theorem \ref{thm:Lambdax}. We emphasize that Theorem \ref{thm:Lambdax} plays a central role in establishing the identity $\mu = \frac 1 2 + \frac 1 2 \sqrt \lambda$ in (C1).

The outline of the paper is as follows. In Section \ref{sec:spectralTheorem} we introduce the notation and recall the spectral theorem. We also comment on some of the consequences of the spectral theorem that are relevant to our study. In Section \ref{sec:angles} we introduce the Friedrichs and principal angles. The proofs from this section are relegated to \hyperref[sec:AppendixA]{Appendix A}. The above-mentioned spectral analysis is presented Section \ref{sec:spectral}, while the results concerning the rates of convergence appear in Section \ref{sec:rates}.

\section{Preliminaries} \label{sec:preliminaries}
 Let $\langle \cdot, \cdot\rangle $ denote an arbitrary inner product in $\mathbb R^d$ and let $\|\cdot\|$ be the induced norm. We also denote the corresponding operator norm by the same symbol $\|\cdot\|$.

We recall the well-known concepts of $Q$-linear and $R$-linear convergence.

\begin{definition}\label{def:Qlinear}
  We say that a sequence of scalars $\{\alpha_k\}_{k=0}^\infty \subset (0,\infty)$ converges \emph{$Q$-linearly} to zero at the rate $r$, if $r = \lim_{k \to \infty} \alpha_{k+1}/\alpha_k \in (0,1)$.
\end{definition}

 We note that some authors define $Q$-linear convergence with at least the rate $r \in (0, 1)$ if the inequality $\alpha_{k+1} \leq r \alpha_k$ holds for large enough $k$. Both of these notions will be relevant in this manuscript.

\begin{definition}\label{def:Rlinear}
  We say that a sequence of scalars $\{\alpha_k\}_{k=0}^\infty \subset [0,\infty)$  converges  \emph{$R$-linearly} to zero at the rate $r \in (0,1)$, if it can be majorized by another sequence that converges $Q$-linearly to zero at the rate $r$.
\end{definition}

\subsection{Spectral Theorem} \label{sec:spectralTheorem}

Let $T$ be a nonzero and linear operator on $\mathbb R^d$. The null space of $T$ is given by $\mathcal N(T) := \{x \in \mathbb R^d \colon T(x) = 0\}$, where by $0$ we denote the null element of $\mathbb R^d$. Let $I$ denote the identity operator on $\mathbb R^d$. The set of eigenvalues of $T$ is defined by
\begin{equation}\label{def:Lambda}
  \Lambda(T) := \Big\{\lambda \in \mathbb R \colon  \mathcal N (\lambda I - T) \neq \{0\}\Big\}.
\end{equation}
In the sequel, instead of writing $\lambda I$, we simply write $\lambda$ for short. Note that by definition, the set $\Lambda(T)$ consists of distinct elements. However, in some situations it is convenient to repeat the eigenvalues according to their geometric multiplicity. In such a case we write this explicitly using the symbol $\Lambda_{\#}(T)$; see Remarks \ref{rem:SDT} and \ref{rem:multiplicity}, and Lemma \ref{lem:principal}. In the sequel, we will also consider the subset of $\Lambda(T)$ given by
\begin{equation}\label{def:sigmax}
  \Lambda (x, T) := \Big\{\lambda \in \Lambda(T) \colon P_{\mathcal N(\lambda - T)}(x) \neq 0 \Big\}, \quad x \in \mathbb R^d.
\end{equation}
This subset of eigenvalues will play a crucial role in determining the rate of convergence; see Theorem \ref{thm:main}.

Following \cite[Theorem 1, Section 79, p 156]{Halmos1974} and \cite[Theorem 5.12]{Kress2014}, we formulate the spectral theorem in terms of orthogonal projections.

\begin{theorem}[Spectral Theorem]\label{thm:SDT}
  Let $T$ be a nonzero, linear and self-adjoint operator on $\mathbb R^d$. Then, the set $\Lambda(T)$ is nonempty and consists of at least one nonzero element. Moreover, for different eigenvalues $\lambda, \lambda' \in \Lambda(T)\setminus\{0\}$, the corresponding eigenspaces $\mathcal N (\lambda - T)$ and $\mathcal N (\lambda' - T)$, and the null space $\mathcal N(T)$, are pairwise orthogonal. Furthermore, for each $x \in \mathbb R^d$, we have $x = x_0 + \sum_{\lambda \in \Lambda(T)\setminus \{0\}} x_\lambda,$ where $x_0 := P_{\mathcal N(T)}(x)$ and $x_\lambda := P_{\mathcal N (\lambda - T)}(x)$, $\lambda \in \Lambda(T)\setminus \{0\}$. Consequently,   $T^k(x) = \sum_{\lambda \in \Lambda(T)\setminus \{0\}} \lambda^k x_\lambda$, $k = 1,2,\ldots$.
\end{theorem}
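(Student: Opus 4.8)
The plan is to obtain Theorem~\ref{thm:SDT} from the classical spectral theorem for a self-adjoint operator on a finite-dimensional inner product space (as recorded in the cited sources \cite{Halmos1974, Kress2014}) and then merely translate its conclusion into the language of orthogonal projections used throughout the paper. Concretely, the classical theorem furnishes an orthonormal basis $e_1,\dots,e_d$ of $\mathbb R^d$ with $Te_j=\lambda_j e_j$ for real scalars $\lambda_j$; the set $\Lambda(T)$ is then the set of distinct values among $\lambda_1,\dots,\lambda_d$, and for each $\lambda\in\Lambda(T)$ the eigenspace $\mathcal N(\lambda-T)$ is the span of those $e_j$ with $\lambda_j=\lambda$. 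Since $T\neq 0$, at least one $\lambda_j$ is nonzero, which gives both that $\Lambda(T)\neq\emptyset$ and that $\Lambda(T)$ contains a nonzero element.

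The orthogonality assertion I would handle directly: if $Tx=\lambda x$ and $Ty=\lambda' y$, then self-adjointness gives $\lambda\langle x,y\rangle=\langle Tx,y\rangle=\langle x,Ty\rangle=\lambda'\langle x,y\rangle$, so $\langle x,y\rangle=0$ whenever $\lambda\neq\lambda'$; observing that $\mathcal N(T)=\mathcal N(0-T)$ is itself the eigenspace associated with the eigenvalue $0$, this single computation also covers the pairwise orthogonality of $\mathcal N(T)$ and the nonzero eigenspaces. Next, the orthonormal basis of eigenvectors shows that $\mathbb R^d$ is the orthogonal direct sum $\mathcal N(T)\oplus\bigoplus_{\lambda\in\Lambda(T)\setminus\{0\}}\mathcal N(\lambda-T)$, which is equivalent to saying that the orthogonal projections onto these mutually orthogonal subspaces add up to the identity. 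Hence every $x\in\mathbb R^d$ decomposes as $x=P_{\mathcal N(T)}(x)+\sum_{\lambda\in\Lambda(T)\setminus\{0\}}P_{\mathcal N(\lambda-T)}(x)=x_0+\sum_\lambda x_\lambda$, and applying $T$ termwise, using $Tx_0=0$ and $Tx_\lambda=\lambda x_\lambda$, followed by an obvious induction on $k$, yields $T^k(x)=\sum_{\lambda\in\Lambda(T)\setminus\{0\}}\lambda^k x_\lambda$.

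Should a self-contained argument be preferred over invoking the classical theorem, essentially everything above is routine bookkeeping except for one point, which is where I expect the only real work: producing an eigenvector in the first place and then iterating. For this I would maximize the Rayleigh quotient $x\mapsto\langle Tx,x\rangle$ over the (compact) unit sphere, show by a variational/Lagrange-multiplier argument that a maximizer is an eigenvector, and then restrict $T$ to the orthogonal complement of that eigenvector, which is $T$-invariant by self-adjointness, completing the proof by induction on $\dim$. The nonvanishing of some eigenvalue is then automatic: a self-adjoint $T$ with $\Lambda(T)=\{0\}$ has $\mathbb R^d=\mathcal N(T)$ and hence $T=0$, contradicting the hypothesis.
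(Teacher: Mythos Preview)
Your proposal is correct. Note that the paper does not actually give a proof of Theorem~\ref{thm:SDT}; it is stated as a known result with references to \cite{Halmos1974, Kress2014}, and your approach---invoking the classical spectral theorem from those same sources and translating it into the language of orthogonal projections---is precisely what the paper's citation is meant to convey.
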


\begin{remark} \label{rem:SDT}
  The spectral theorem is oftentimes formulated in an equivalent way. Indeed for $\Lambda_{\#}(T) = \{\lambda_1,\ldots,  \lambda_d  \}$ and the corresponding orthogonal eigenvectors $\{e_1, \ldots,  e_d  \}$, where $T(e_i) = \lambda_i e_i$, we have $x =  \sum_{i=1}^{d}  \langle x, e_i \rangle e_i$. In particular, for each $\lambda \in \Lambda(T)$, the projection $x_\lambda$ of Theorem \ref{thm:SDT} can be written as $x_\lambda = \sum_{\lambda_i = \lambda} \langle x, e_i \rangle e_i$.
\end{remark}

 The representation of $x$ given in Theorem \ref{thm:SDT} is referred to as the \emph{spectral representation of $x$ for the operator $T$}. We note that this representation is unique.
Thus, one can use Theorem \ref{thm:SDT} in order to determine the projections onto $\mathcal N(T)$ and $\mathcal N(\lambda - T)$. Since we refer to this property quite often, we formulate it in the following lemma.

\begin{lemma}[Uniqueness] \label{lem:uniqueness}
  Let $T $ be as in Theorem \ref{thm:SDT}. Moreover, let $x \in \mathbb R^d$ and assume that $x$ can be written as $ x = y_0 + \sum_{\lambda \in \Lambda(T)\setminus \{0\}} y_\lambda$ for some $y_0 \in \mathcal N(T)$ and $y_\lambda \in \mathcal N(\lambda - T)$, $\lambda \in \Lambda(T)\setminus \{0\}$. Then,
  $P_{\mathcal N(T)}(x) = y_0$ and $P_{\mathcal N(\lambda - T)}(x) = y_\lambda$, $\lambda \in \Lambda(T)\setminus \{0\}$. Moreover, $P_{\mathcal N(\lambda - T)}(x) = 0$ for all $\lambda \in \mathbb R \setminus \Lambda(T)$.
\end{lemma}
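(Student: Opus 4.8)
The plan is to prove the Uniqueness Lemma as a direct consequence of the Spectral Theorem (Theorem \ref{thm:SDT}) together with the pairwise orthogonality of the eigenspaces and the null space. First I would invoke Theorem \ref{thm:SDT} to write $x$ in its spectral representation $x = x_0 + \sum_{\lambda \in \Lambda(T)\setminus\{0\}} x_\lambda$, where $x_0 = P_{\mathcal N(T)}(x)$ and $x_\lambda = P_{\mathcal N(\lambda - T)}(x)$. Subtracting the hypothesized representation $x = y_0 + \sum_{\lambda} y_\lambda$, I obtain $(x_0 - y_0) + \sum_{\lambda \in \Lambda(T)\setminus\{0\}} (x_\lambda - y_\lambda) = 0$, where each summand $x_0 - y_0$ lies in $\mathcal N(T)$ and each $x_\lambda - y_\lambda$ lies in $\mathcal N(\lambda - T)$.

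The key step is then to use orthogonality: by Theorem \ref{thm:SDT}, the subspaces $\mathcal N(T)$ and $\mathcal N(\lambda - T)$, $\lambda \in \Lambda(T)\setminus\{0\}$, are pairwise orthogonal. Taking the squared norm of the displayed identity and expanding, all cross terms vanish, leaving $\|x_0 - y_0\|^2 + \sum_{\lambda} \|x_\lambda - y_\lambda\|^2 = 0$. Since each term is nonnegative, every term must vanish, so $y_0 = x_0 = P_{\mathcal N(T)}(x)$ and $y_\lambda = x_\lambda = P_{\mathcal N(\lambda - T)}(x)$ for every $\lambda \in \Lambda(T)\setminus\{0\}$, as claimed.

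For the final assertion, let $\lambda \in \mathbb R \setminus \Lambda(T)$. By the definition \eqref{def:Lambda} of $\Lambda(T)$, we have $\mathcal N(\lambda - T) = \{0\}$, hence $P_{\mathcal N(\lambda - T)}$ is the zero operator and $P_{\mathcal N(\lambda - T)}(x) = 0$ trivially. (One should check the only mildly delicate point: that $0$ is always excluded as an eigenvalue of interest exactly when $\mathcal N(T) = \{0\}$, but since the statement only concerns $\lambda \notin \Lambda(T)$ and $0 \in \Lambda(T)$ whenever $\mathcal N(T) \neq \{0\}$, there is no conflict — if $\mathcal N(T) = \{0\}$ then $x_0 = 0$ automatically and the representation is still consistent.)

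I do not expect any genuine obstacle here; the lemma is essentially a restatement of the uniqueness of orthogonal decompositions along mutually orthogonal subspaces, and the only thing to be careful about is bookkeeping — making sure the index set $\Lambda(T)\setminus\{0\}$ is handled consistently and that the case $0 \notin \Lambda(T)$ (equivalently $T$ invertible) is covered, in which case $x_0 = y_0 = 0$ and the sum ranges over all of $\Lambda(T)$. The argument via squared norms is the cleanest route; an alternative would be to apply the projections $P_{\mathcal N(T)}$ and $P_{\mathcal N(\lambda-T)}$ directly to the identity $\sum (x_\mu - y_\mu) = 0$ and use that a projection annihilates every orthogonal complement summand, but the norm computation avoids having to justify term-by-term application of a bounded operator to a (finite) sum and is marginally more transparent.
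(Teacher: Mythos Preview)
Your proposal is correct and aligns with the paper's treatment: the paper does not give a separate proof of this lemma but simply remarks that the spectral representation in Theorem~\ref{thm:SDT} is unique (by orthogonality of the eigenspaces) and records the consequence as Lemma~\ref{lem:uniqueness}. Your squared-norm argument is exactly the standard way to make this uniqueness explicit, so you are filling in the details the paper leaves to the reader.
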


The next lemma is particularly useful when $\lambda = 1 $, in which case $T_1 = T - P_{\fix T}$.
\begin{lemma}\label{lem:spectralShift}
  Let $T $ be as in Theorem \ref{thm:SDT} and let $T_\lambda := T - \lambda P_{\mathcal N(\lambda - T)}$ for some $\lambda \neq 0$. Then,
  \begin{equation}\label{lem:spectralShift:NandE1}
    \mathcal N(T_\lambda) = \mathcal N(T) \oplus \mathcal N(\lambda - T), \ \
    \mathcal N(\lambda - T_\lambda) = \{0\}
  \end{equation}
  and for each nonzero $\sigma \neq \lambda$, we have
  \begin{equation}\label{lem:spectralShift:NandE2}
    \mathcal N(\sigma - T_\lambda) = \mathcal N(\sigma - T).
  \end{equation}
  In particular,
  \begin{equation}\label{lem:spectralShift:Lambda}
    \Lambda(T_\lambda) \setminus \{0\} = \Lambda(T)\setminus\{0, \lambda\}.
  \end{equation}
  Moreover, for each $x \in \mathbb R^d$ and $x_\lambda := P_{\mathcal N(\lambda - T)}(x)$, we have
  \begin{equation}\label{lem:spectralShift:Lambdax}
    \Lambda (x, T_\lambda) \setminus\{0\} = \Lambda(x,T)\setminus\{0,\lambda\}
    = \Lambda (x - x_\lambda, T) \setminus \{0\}.
  \end{equation}
\end{lemma}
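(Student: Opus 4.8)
The plan is to establish the four displayed identities in order, using the Spectral Theorem and the Uniqueness Lemma as the only tools. First I would prove \eqref{lem:spectralShift:NandE1}. Write $E_\lambda := P_{\mathcal N(\lambda - T)}$, so $T_\lambda = T - \lambda E_\lambda$. Since $T$ is self-adjoint and $E_\lambda$ is the orthogonal projection onto an eigenspace of $T$, $E_\lambda$ commutes with $T$ and $T_\lambda$ is again self-adjoint. Using the spectral representation $x = x_0 + \sum_{\sigma \in \Lambda(T)\setminus\{0\}} x_\sigma$, one computes directly that $T_\lambda(x) = \sum_{\sigma \in \Lambda(T)\setminus\{0,\lambda\}} \sigma\, x_\sigma$, because the $\lambda$-term $\lambda x_\lambda$ is cancelled by $-\lambda E_\lambda(x) = -\lambda x_\lambda$ and the null-space term contributes nothing. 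From this formula, $T_\lambda(x) = 0$ iff $\sigma x_\sigma = 0$ for every $\sigma \in \Lambda(T)\setminus\{0,\lambda\}$, i.e.\ iff $x_\sigma = 0$ for all such $\sigma$, i.e.\ iff $x \in \mathcal N(T) \oplus \mathcal N(\lambda - T)$; this gives the first equation. For the second, if $x \in \mathcal N(\lambda - T_\lambda)$ then $\lambda x = T_\lambda(x) = \sum_{\sigma \neq 0,\lambda} \sigma x_\sigma$, and comparing spectral representations (uniqueness, Lemma \ref{lem:uniqueness}, applied to $T$) forces $\lambda x_\sigma = \sigma x_\sigma$ for all $\sigma$, hence $x_\sigma = 0$ whenever $\sigma \neq \lambda$, and also the $\lambda$-component must vanish since $\lambda x_\lambda$ does not appear on the right-hand side; thus $x = 0$.

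Next I would prove \eqref{lem:spectralShift:NandE2}. Fix a nonzero $\sigma \neq \lambda$. From the formula $T_\lambda(x) = \sum_{\tau \in \Lambda(T)\setminus\{0,\lambda\}} \tau x_\tau$ derived above, $T_\lambda(x) = \sigma x$ means $\sum_{\tau \neq 0,\lambda} \tau x_\tau = \sigma(x_0 + \sum_{\tau \neq 0} x_\tau)$; equating spectral components via uniqueness gives $\sigma x_0 = 0$ (so $x_0 = 0$), $\sigma x_\lambda = 0$ (so $x_\lambda = 0$), $\tau x_\tau = \sigma x_\tau$ for $\tau \neq 0,\lambda$ (so $x_\tau = 0$ for $\tau \neq \sigma$), and the $\sigma$-component is unconstrained. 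Hence $x = x_\sigma \in \mathcal N(\sigma - T)$. Conversely, if $x \in \mathcal N(\sigma - T)$ then $E_\lambda(x) = 0$ (distinct eigenspaces are orthogonal, $\sigma \neq \lambda$), so $T_\lambda(x) = T(x) = \sigma x$. This proves the set equality. The displayed consequence \eqref{lem:spectralShift:Lambda} is then immediate: an element $\sigma \neq 0$ lies in $\Lambda(T_\lambda)$ iff $\mathcal N(\sigma - T_\lambda) \neq \{0\}$; by \eqref{lem:spectralShift:NandE1} this fails for $\sigma = \lambda$, and by \eqref{lem:spectralShift:NandE2} it holds for $\sigma \neq \lambda$ exactly when $\sigma \in \Lambda(T)$.

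For \eqref{lem:spectralShift:Lambdax} I would again use Lemma \ref{lem:uniqueness}, this time to identify the eigenspace projections of $x$ relative to the various operators. By \eqref{lem:spectralShift:NandE2}, for $\sigma \in \Lambda(T)\setminus\{0,\lambda\}$ the eigenspaces $\mathcal N(\sigma - T_\lambda)$ and $\mathcal N(\sigma - T)$ coincide, so the orthogonal projections agree and $P_{\mathcal N(\sigma - T_\lambda)}(x) = x_\sigma \neq 0$ iff $\sigma \in \Lambda(x,T)$; combined with \eqref{lem:spectralShift:Lambda} this yields $\Lambda(x,T_\lambda)\setminus\{0\} = \Lambda(x,T)\setminus\{0,\lambda\}$. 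For the second equality, note $x - x_\lambda = x_0 + \sum_{\sigma \neq 0,\lambda} x_\sigma$ is precisely the spectral representation of $x - x_\lambda$ for $T$ (its $\lambda$-component is zero since the $x_\sigma$ are mutually orthogonal), so by uniqueness $P_{\mathcal N(\sigma - T)}(x - x_\lambda) = x_\sigma$ for $\sigma \neq \lambda$ and $= 0$ for $\sigma = \lambda$; hence $\sigma \in \Lambda(x - x_\lambda, T)\setminus\{0\}$ iff $\sigma \neq \lambda$ and $x_\sigma \neq 0$, which is exactly $\Lambda(x,T)\setminus\{0,\lambda\}$.

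I expect no serious obstacle here; the whole argument is a careful bookkeeping exercise built on the single computation $T_\lambda(x) = \sum_{\sigma \in \Lambda(T)\setminus\{0,\lambda\}} \sigma x_\sigma$ together with repeated appeals to the uniqueness of the spectral representation. The one point demanding a little care is keeping the null-space component $x_0$ and the $\lambda$-component $x_\lambda$ straight throughout — in particular remembering that $T_\lambda$ collapses both $\mathcal N(T)$ and $\mathcal N(\lambda - T)$ into its own null space, so that when solving $T_\lambda(x) = \sigma x$ for nonzero $\sigma$ one must separately conclude $x_0 = 0$ and $x_\lambda = 0$ rather than absorbing them silently.
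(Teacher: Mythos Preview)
Your proposal is correct and follows essentially the same approach as the paper: both arguments compute $T_\lambda(x)$ (equivalently $\sigma x - T_\lambda(x)$) from the spectral representation of $x$ with respect to $T$, then read off the eigenspaces of $T_\lambda$ by invoking the uniqueness of that representation, and finally identify the projections onto eigenspaces to obtain \eqref{lem:spectralShift:Lambdax}. The only cosmetic difference is that the paper disposes of the trivial case $\lambda \notin \Lambda(T)$ explicitly at the outset, whereas your formula $T_\lambda(x) = \sum_{\sigma \in \Lambda(T)\setminus\{0,\lambda\}} \sigma x_\sigma$ handles it automatically.
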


\begin{proof}
  The statement of the lemma is trivial when $\lambda \notin \Lambda (T)$ as $T_\lambda = T$. We may thus assume that $\lambda \in \Lambda(T)\setminus \{0\}$. We first show \eqref{lem:spectralShift:NandE1}--\eqref{lem:spectralShift:NandE2}. Indeed, by Theorem \ref{thm:SDT}, for each $x \in \mathbb R^d$, we have
  \begin{equation}\label{pr:spectralShift:x}
    x = (x_0 + x_\lambda) + \sum_{\varphi \in \Lambda(T)\setminus\{0, \lambda\}} x_\varphi,
  \end{equation}
  where the vectors $x_0 := P_{\mathcal N(T)}(x)$ and $x_\varphi := P_{\mathcal N (\varphi - T)}(x)$, $\varphi \in \Lambda(T)\setminus \{0\}$, are orthogonal. Moreover, for each $\sigma \in \mathbb R$, we get
  \begin{equation}\label{pr:spectralShift:T}
    \sigma x - T_\lambda(x) = \sigma (x_0 + x_\lambda) + \sum_{\varphi \in \Lambda(T)\setminus\{0, \lambda\}} (\sigma - \varphi) x_\varphi.
  \end{equation}
  Assume that $\sigma x - T_\xi(x) = 0$. In each one of the cases, $\sigma = 0$, $\sigma = \lambda$ and $\sigma \in \Lambda(T) \setminus \{0, \lambda\}$, the vectors corresponding to the nonzero scalars on the right-hand side of \eqref{pr:spectralShift:T} must be zero. Thus, we arrive at $x = x_0 + x_\lambda \in \mathcal N(T) \oplus\mathcal N(\lambda - T)$, $x = 0$ and $x = x_\sigma \in \mathcal N(\sigma - T)$, respectively. This shows the inclusions ``$\subset$''. The other direction ``$\supset$'' is obvious. By using the same argument, we see that $\mathcal N(\sigma - T_\lambda) = \{0\}$ for all $\sigma \notin \Lambda(T)$, which completes the proof of \eqref{lem:spectralShift:NandE1}--\eqref{lem:spectralShift:NandE2}.

  It is clear that \eqref{lem:spectralShift:NandE1}--\eqref{lem:spectralShift:NandE2} imply \eqref{lem:spectralShift:Lambda}. Moreover, applying Lemma \ref{lem:uniqueness} to $T_\lambda$ and the representation of $x$ given in \eqref{pr:spectralShift:x}, we get $P_{\mathcal N(T_\lambda)}(x) = x_0+x_\lambda$ and $P_{\mathcal N(\varphi - T_\lambda)}(x) = x_\varphi$ for $\varphi \in \Lambda(T_\lambda) \setminus\{0\}$. Similarly, we have $P_{\mathcal N(T)}(x-x_\lambda) = x_0$, $P_{\mathcal N(\lambda - T)}(x-x_\lambda) = 0$ and $P_{\mathcal N(\varphi - T)}(x-x_\lambda) = x_\varphi$. Note, however, that for $\varphi \in \Lambda(T_\lambda) \setminus\{0\} = \Lambda(T) \setminus\{0, \lambda\}$, the projection $x_\varphi \neq 0$ only when $\varphi \in \Lambda(x,T)\setminus\{0,\lambda\}$. This proves \eqref{lem:spectralShift:Lambdax}.
\end{proof}

\subsection{Dixmier, Friedrichs and Principal Angles} \label{sec:angles}
 Let $A$ and $B$ be two nontrivial ($\neq \{0\}$, $\neq \mathbb R^d$) subspaces of $\mathbb R^d$.  We recall the concept of the Dixmier angle, which is sometimes called the minimal angle.

\begin{definition}\label{def:Dixmier}
  The \emph{Dixmier angle} $\theta_D = \theta_D (A,B) \in [0,\frac \pi 2]$ is implicitly defined through
  \begin{equation} \label{def:Dixmier:eq}
    \cos(\theta_D) = \max\{  \langle a, b \rangle  \colon a\in A,\ b \in B,\  \|a\|, \|b\| \leq 1  \}.
  \end{equation}
\end{definition}

A very similar concept to the Dixmier angle is the Friedrichs angle.
\begin{definition} \label{def:Friedrichs}
  The \emph{Friedrichs angle} $\theta_F = \theta_F (A,B) \in [0,\frac \pi 2]$ is implicitly defined through
  \begin{equation} \label{def:Friedrichs:eq}
    \cos(\theta_F) =
    \max\left\{  \langle a, b \rangle  \colon
      \begin{array}{l}
        a \in A \cap (A \cap B)^\perp, \\
        b \in B \cap (A \cap B)^\perp,
      \end{array}
       \  \|a\|, \|b\| \leq 1
    \right\}.
  \end{equation}
\end{definition}

In general, the two angles can be different. However, when $A \cap B = \{0\}$, then $\theta_F = \theta_D$.  Note that it is enough to consider the maximum in \eqref{def:Dixmier:eq} only on the unit sphere $\|a\| = \|b\| = 1$ and the value will be the same. A similar observation can be made for the maximum in \eqref{def:Friedrichs:eq} when $A \nsubseteq B$ and $B \nsubseteq A$.

\smallskip
We now comment on some less intuitive properties of the Friedrichs angle.

\begin{remark}[Orthogonal and Nonorthogonal Subspaces]
    The Friedrichs angle allows us to define \emph{orthogonal} subspaces through the condition $\cos(\theta_F) = 0$, even when $A \cap B \neq \{0\}$. Moreover, it allows us to define  \emph{nonorthogonal} subspaces via the inequality $\cos(\theta_F) > 0$. Note however, that nonortoghonal subspaces must satisfy $A \nsubseteq B$ and $B \nsubseteq A$. The nonorthogonality condition and its equivalent forms will be oftentimes assumed in this manuscript; see, for example, Remark \ref{rem:cosANDlambda} and Remark \ref{rem:assumption}.
\end{remark}

\begin{remark}[Parallel Subspaces] \label{rem:parallel}
    Surprisingly, the Friedrichs angle is not a suitable tool for defining parallel subspaces in $\mathbb R^d$, as in $\mathbb R^d$ the maximum on the right-hand side in \eqref{def:Friedrichs:eq} is always strictly less than one. In particular, in $\mathbb R^d$, we always have
    \begin{equation}\label{}
      \cos(\theta_F) < 1,
    \end{equation}
    equivalently $\theta_F > 0$. Since this condition is less intuitive, we elaborate on its proof below. We emphasize that for infinite dimensional Hilbert spaces it may happen that $\cos(\theta_F) = 1$ which leads to many interesting behaviours of both the MAP and the MSP. We refer the interested reader to \cite{BadeaGrivauxMuller2011, BauschkeBorweinLewis1997, BauschkeDeutschHundal2009, BorodinKopecka2020, DeutschHundal2015, ReichZalas2021, ReichZalas2023}.
\end{remark}
\begin{proof}
  Suppose that $\cos(\theta) = 1$. Then, using a compactness argument, there are unit vectors $a \in A \cap (A \cap B)^\perp$ and $b \in B \cap (A \cap B)^\perp$ such that
    \begin{equation}\label{}
      1 = \langle a, b\rangle = \langle a, P_A(b)\rangle\leq \|P_A(b)\| \leq \|b\| = 1.
    \end{equation}
    Consequently, $\|P_{A^\perp}(b)\| = 0$ and $b = P_A(b) + P_{A^\perp}(b) = P_A(b) \in A$. This shows that there is a nonzero $b \in A \cap B$ -- in contradiction with $b \in (A \cap B)^\perp$.
\end{proof}

The following lemma connects the Dixmier angle with the largest eigenvalue of the product $P_BP_AP_B$. For the proof, see \hyperref[sec:AppendixA]{Appendix A}.

\begin{lemma}\label{lem:principal0}
  Let $\lambda^* := \max \Lambda(P_BP_AP_B)$. Then, $\cos(\theta_D) = \sqrt{\lambda^*} = \|P_AP_B\|$. Moreover, if the unit vectors $u^* \in A$ and $v^* \in B$ satisfy $\langle u^*, v^*\rangle = \sqrt{\lambda^*}$, then
  \begin{equation}\label{lem:principal0:reciprocal}
    P_A(v^*) = \sqrt{\lambda^*} u^* \quad \text{and} \quad
    P_B(u^*) = \sqrt{\lambda^*} v^*.
  \end{equation}
  In particular, for $\lambda^* >0$, we have $u^* \in \mathcal N(\lambda^* - P_AP_BP_A)$ and $v^* \in \mathcal N(\lambda^* - P_BP_AP_B)$, while for $\lambda^* = 0$, we have $u^* \in A \cap B^\perp$ and $v^* \in B \cap A^\perp$.
\end{lemma}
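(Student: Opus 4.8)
The plan is to identify $\cos(\theta_D)$ with the operator norm $\|P_AP_B\|$, to convert this into the eigenvalue statement by passing to the self-adjoint positive semidefinite operator $P_BP_AP_B$, and finally to read off the reciprocity relations \eqref{lem:principal0:reciprocal} (and the eigenvector claims) from the equality case of the Cauchy--Schwarz inequality.

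First I would prove $\cos(\theta_D)=\|P_AP_B\|$. For $a\in A$ and $b\in B$ in the unit ball, self-adjointness and idempotency of $P_A$ give $\langle a,b\rangle=\langle a,P_Ab\rangle=\langle a,P_AP_Bb\rangle\le\|P_AP_B\|$, hence $\cos(\theta_D)\le\|P_AP_B\|$. For the reverse inequality, given $x$ with $\|x\|\le 1$ and $P_AP_Bx\neq 0$, the unit-ball vectors $b:=P_Bx\in B$ and $a:=P_AP_Bx/\|P_AP_Bx\|\in A$ satisfy $\langle a,b\rangle=\|P_AP_Bx\|$; passing to the supremum over such $x$ (the case $P_AP_B=0$ being immediate, since then $\langle a,b\rangle=\langle a,P_AP_Bb\rangle=0$ for all admissible $a,b$) yields $\cos(\theta_D)\ge\|P_AP_B\|$. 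Then $\|P_AP_B\|^2=\|(P_AP_B)^*(P_AP_B)\|=\|P_BP_AP_B\|$, and since $\langle P_BP_AP_Bx,x\rangle=\|P_AP_Bx\|^2\ge 0$, the operator $P_BP_AP_B$ is self-adjoint and positive semidefinite, so its norm equals its largest eigenvalue $\lambda^*$. This gives $\cos(\theta_D)=\sqrt{\lambda^*}=\|P_AP_B\|$.

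Next, let $u^*\in A$ and $v^*\in B$ be unit vectors with $\langle u^*,v^*\rangle=\sqrt{\lambda^*}$. Since $v^*=P_Bv^*$, the chain
\[
  \sqrt{\lambda^*}=\langle u^*,v^*\rangle=\langle u^*,P_Av^*\rangle\le\|P_Av^*\|=\|P_AP_Bv^*\|\le\|P_AP_B\|=\sqrt{\lambda^*}
\]
must consist entirely of equalities; in particular $\|P_Av^*\|=\sqrt{\lambda^*}$ and Cauchy--Schwarz is tight in $\langle u^*,P_Av^*\rangle=\|u^*\|\,\|P_Av^*\|$. When $\lambda^*>0$ this forces $P_Av^*=\sqrt{\lambda^*}\,u^*$; when $\lambda^*=0$ we get $P_Av^*=0=\sqrt{\lambda^*}\,u^*$ directly. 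Interchanging the roles of $A,B$ and of $u^*,v^*$ (and using $\|P_BP_A\|=\|(P_AP_B)^*\|=\|P_AP_B\|$) gives $P_Bu^*=\sqrt{\lambda^*}\,v^*$. Finally, if $\lambda^*>0$, applying $P_A$ to the latter relation and using $P_Au^*=u^*$ yields $P_AP_BP_Au^*=P_AP_Bu^*=\sqrt{\lambda^*}\,P_Av^*=\lambda^*u^*$, so $u^*\in\mathcal N(\lambda^*-P_AP_BP_A)$, and symmetrically $v^*\in\mathcal N(\lambda^*-P_BP_AP_B)$; if $\lambda^*=0$, the two relations read $P_Av^*=0$ and $P_Bu^*=0$, i.e. $u^*\in A\cap B^\perp$ and $v^*\in B\cap A^\perp$.

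I do not anticipate a genuine obstacle: every step is a routine computation with orthogonal projections. The only points requiring attention are ordering the inequality chain so that equality propagates all the way back to the Cauchy--Schwarz step, and treating the degenerate case $\lambda^*=0$ separately, since there the equality condition in Cauchy--Schwarz does not by itself deliver the proportionality $P_Av^*=\sqrt{\lambda^*}\,u^*$ and one must instead invoke $\|P_Av^*\|=0$.
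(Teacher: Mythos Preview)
Your proof is correct and follows essentially the same outline as the paper's: identify $\cos(\theta_D)=\|P_AP_B\|$, pass to $\lambda^*=\|P_BP_AP_B\|$, then deduce \eqref{lem:principal0:reciprocal} from $\|P_Av^*\|=\sqrt{\lambda^*}$ together with the extremality condition $\langle u^*,P_Av^*\rangle=\|u^*\|\,\|P_Av^*\|$ (the paper expands $\|P_Av^*-\sqrt{\lambda^*}u^*\|^2$ instead of invoking the equality case of Cauchy--Schwarz, which is the same computation).

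The one genuine difference is in the first step. The paper obtains $\cos(\theta_D)=\|P_AP_B\|$ by a case split: if $A\cap B\neq\{0\}$ both sides equal $1$, while if $A\cap B=\{0\}$ one has $\theta_D=\theta_F$ and the Kayalar--Weinert identity \eqref{int:KW88:eq} gives $\cos(\theta_F)=\|P_AP_B-P_{A\cap B}\|=\|P_AP_B\|$. Your direct argument via $\langle a,b\rangle=\langle a,P_AP_Bb\rangle$ and the explicit construction $a=P_AP_Bx/\|P_AP_Bx\|$, $b=P_Bx$ is more self-contained and avoids the forward reference to \eqref{int:KW88:eq}; it also makes the lemma logically independent of that result, which is a small structural gain.
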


The following recursive definition is quite common in the literature; see, for example, \cite[Section 1]{BjorckGolub1973}, \cite[p. 25]{Galantai2004}, \cite[Remark 2.6]{BottcherSpitkovsky2010} or \cite[Definition 3.1]{BauschkeCruzNghiaPhanWang2016}. Let $p : = \min\{\dim A, \dim B\}$ and put $u_0 : = v_0 := 0$ in $\mathbb R^d$.

\begin{definition}\label{def:principal}
  The \emph{principal angles} $\theta_n \in [0,\frac \pi 2]$, $n = 1, \ldots, p$, are implicitly defined through $\cos(\theta_n) = \langle u_n, v_n\rangle$, where for each $n = 1, \ldots, p$, the unit vectors $ u_n \in A$ and $v_n \in B$ satisfy
  \begin{equation}\label{def:principal:cos}
    \langle u_n, v_n \rangle = \max \left\{
      \langle u, v \rangle \colon
      \begin{array}{c}
        u \in A,\ v \in B,\ \|u\| = \|v\| = 1 \\
        \langle u, u_i \rangle = \langle v, v_i \rangle = 0,\ i = 0, \ldots, n-1
      \end{array}
    \right\}.
  \end{equation}
\end{definition}

Lemma \ref{lem:principal} ensures that the principal angles do not depend on the choice of the \emph{principal vectors} $u_n$ and $v_n$. Since this result is known, we decided to relegate its proof to \hyperref[sec:AppendixA]{Appendix A}. We mention here only that the proof relies on multiple applications of Lemma \ref{lem:principal0} to properly chosen subspaces $A_n$ and $B_n$.

\begin{lemma}\label{lem:principal}
  Let each one of the tuples $\{u_n\}_{n=1}^p \subset A$ and $\{v_n\}_{n=1}^p \subset B$ consist of orthonormal vectors. Assume that $r := \rank (P_BP_AP_B) \geq 1$. The following conditions are equivalent:
  \begin{enumerate}[(i)]

  \item For each $n = 1,\ldots,p$, the vectors $u_n$ and $v_n$ satisfy \eqref{def:principal:cos}.

  \item There are $\lambda_1 \geq \ldots \geq \lambda_r > 0 = \lambda_{r+1} = \ldots = \lambda_p$ such that
  \begin{equation}\label{lem:principal:reciprocal}
    P_A(v_n) = \sqrt{\lambda_n} u_n \quad \text{and} \quad
    P_B(u_n) = \sqrt{\lambda_n} v_n, \quad n = 1,\ldots,p.
  \end{equation}

  \end{enumerate}
  In any case, (i) or (ii), we get $\langle u_n, v_n\rangle = \sqrt{\lambda_n}$. Moreover, for $n = 1 ,\ldots,r$, we have $u_n \in \mathcal N(\lambda_n - P_AP_BP_A)$ and $v_n \in \mathcal N(\lambda_n - P_BP_AP_B)$, while for $n = r+1, \ldots, p$, we have $u_n \in A \cap B^\perp$ and $v_n \in B \cap A^\perp$. In particular, $\Lambda_{\#}(P_BP_AP_B) \setminus\{0\} = \{\lambda_1, \ldots, \lambda_r\}$.
\end{lemma}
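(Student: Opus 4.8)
The plan is to set up an induction on $n$ in which each step reinterprets the constrained maximization in \eqref{def:principal:cos} as the computation of a Dixmier angle of a pair of ``residual'' subspaces, so that Lemma \ref{lem:principal0} applies; a separate, global rank count then pins down how many of the resulting scalars are positive and identifies the relevant eigenspaces. Concretely, for $n=1,\dots,p$ I would put $A_n := A \cap \{u_1,\dots,u_{n-1}\}^\perp$ and $B_n := B \cap \{v_1,\dots,v_{n-1}\}^\perp$, so that $A_1=A$, $B_1=B$ and, since $u_1,\dots,u_{n-1}\in A$ are orthonormal with $n-1<p\le\dim A$ (and symmetrically on the $B$-side), each $A_n,B_n$ is nontrivial with $\dim A_n=\dim A-n+1$, $\dim B_n=\dim B-n+1$. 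The feasible set in \eqref{def:principal:cos} is exactly $\{(u,v)\colon u\in A_n,\ v\in B_n,\ \|u\|=\|v\|=1\}$, so under (i) we have $u_n\in A_n$, $v_n\in B_n$ and $\langle u_n,v_n\rangle=\cos\theta_D(A_n,B_n)$. Setting $\lambda_n:=\max\Lambda(P_{B_n}P_{A_n}P_{B_n})$, Lemma \ref{lem:principal0} applied to $(A_n,B_n)$ yields $\langle u_n,v_n\rangle=\sqrt{\lambda_n}$ together with $P_{A_n}(v_n)=\sqrt{\lambda_n}\,u_n$ and $P_{B_n}(u_n)=\sqrt{\lambda_n}\,v_n$.

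Next I would upgrade these ``local'' reciprocity relations to the global ones $P_A(v_n)=\sqrt{\lambda_n}u_n$ and $P_B(u_n)=\sqrt{\lambda_n}v_n$ by induction on $n$. Writing $A=\spn\{u_1,\dots,u_{n-1}\}\oplus A_n$ orthogonally gives $P_A(v_n)=\sum_{i<n}\langle v_n,u_i\rangle u_i+P_{A_n}(v_n)$; since $v_n\in B$ and, by the inductive hypothesis, $P_B(u_i)=\sqrt{\lambda_i}v_i$, we get $\langle v_n,u_i\rangle=\langle v_n,P_B(u_i)\rangle=\sqrt{\lambda_i}\langle v_n,v_i\rangle=0$ for $i<n$ by orthonormality, so $P_A(v_n)=P_{A_n}(v_n)=\sqrt{\lambda_n}u_n$; the relation for $P_B(u_n)$ is proved symmetrically using $P_A(v_i)=\sqrt{\lambda_i}u_i$. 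Monotonicity $\lambda_1\ge\cdots\ge\lambda_p\ge0$ follows because $A_{n+1}\subseteq A_n$ and $B_{n+1}\subseteq B_n$ shrink the feasible set in \eqref{def:principal:cos}. This establishes (i) $\Rightarrow$ \eqref{lem:principal:reciprocal}.

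For the remaining assertions I would argue from \eqref{lem:principal:reciprocal} alone, so that the conclusions hold under (i) (just proved) or under (ii) (hypothesis). From \eqref{lem:principal:reciprocal} one gets $P_BP_AP_B(v_n)=\sqrt{\lambda_n}P_B(u_n)=\lambda_n v_n$ and, symmetrically, $P_AP_BP_A(u_n)=\lambda_n u_n$; also $\langle u_n,v_n\rangle=\langle u_n,P_A(v_n)\rangle=\sqrt{\lambda_n}$. Let $m$ be the number of indices with $\lambda_n>0$, so $\lambda_1\ge\cdots\ge\lambda_m>0=\lambda_{m+1}=\cdots=\lambda_p$ (the $\lambda_n$ being nonincreasing; under (ii) this ordering is part of the hypothesis). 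Then $v_1,\dots,v_m$ are orthonormal eigenvectors of $P_BP_AP_B$ for nonzero eigenvalues, hence lie in $\image(P_BP_AP_B)$, so $m\le r$. For $n>m$ we have $\lambda_n=0$, hence $P_A(v_n)=0$ and $P_B(u_n)=0$, i.e.\ $v_n\in B\cap A^\perp$ and $u_n\in A\cap B^\perp$, so $p-m\le\dim(B\cap A^\perp)$ and $p-m\le\dim(A\cap B^\perp)$. Using the standard identities $\dim(B\cap A^\perp)=\dim B-r$ and $\dim(A\cap B^\perp)=\dim A-r$ (via $\ker(P_BP_AP_B)=\ker(P_AP_B)=B^\perp\oplus(B\cap A^\perp)$, $\rank(P_AP_B)=r$, and the symmetric statement), we obtain $p-m\le\min\{\dim A,\dim B\}-r=p-r$, so $m\ge r$ and hence $m=r$. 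This gives $\lambda_1\ge\cdots\ge\lambda_r>0=\lambda_{r+1}=\cdots=\lambda_p$; moreover $v_1,\dots,v_r$ is then an orthonormal basis of $\image(P_BP_AP_B)$, so $\Lambda_\#(P_BP_AP_B)\setminus\{0\}=\{\lambda_1,\dots,\lambda_r\}$, while $u_n\in\mathcal N(\lambda_n-P_AP_BP_A)$ and $v_n\in\mathcal N(\lambda_n-P_BP_AP_B)$ for $n\le r$, and $u_n\in A\cap B^\perp$, $v_n\in B\cap A^\perp$ for $n>r$.

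It remains to prove (ii) $\Rightarrow$ (i). Since $u_n\in A_n$, $v_n\in B_n$ and $\langle u_n,v_n\rangle=\sqrt{\lambda_n}$, we have $\cos\theta_D(A_n,B_n)\ge\sqrt{\lambda_n}$. Conversely, for any unit $u\in A_n$ and $v\in B_n$, Cauchy--Schwarz gives $\langle u,v\rangle^2=\langle u,P_A(v)\rangle^2\le\|P_A(v)\|^2=\langle v,P_BP_AP_B(v)\rangle$ (using $v\in B$); expanding $v$ in the orthonormal basis $v_1,\dots,v_r$ of $\image(P_BP_AP_B)$ together with a vector in $\ker(P_BP_AP_B)$, and using $v\perp v_1,\dots,v_{n-1}$ with $\lambda_n\ge\lambda_j$ for $j\ge n$, this quantity is $\le\lambda_n$. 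Hence $\langle u,v\rangle\le\sqrt{\lambda_n}=\langle u_n,v_n\rangle$, so $(u_n,v_n)$ attains the maximum in \eqref{def:principal:cos}, which is (i). I expect the main obstacle to be the rank count of the third paragraph: the step-by-step Dixmier analysis only controls each $\lambda_n$ locally and never by itself reveals that exactly $r$ of them are positive, so one must combine the two one-sided bounds $m\le r$ and $p-m\le p-r$, the latter resting on correctly computing $\dim(A\cap B^\perp)$ and $\dim(B\cap A^\perp)$ from the kernel of $P_AP_B$.
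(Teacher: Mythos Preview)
Your proof is correct and shares the paper's overall architecture---reducing \eqref{def:principal:cos} to the Dixmier angle of the residual pair $(A_n,B_n)$ and invoking Lemma~\ref{lem:principal0}, then upgrading the local reciprocity relations to global ones by an induction that kills the cross terms $\langle v_n,u_i\rangle$---but it departs from the paper in two places.

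First, to pin down how many $\lambda_n$ are positive, the paper argues constructively: for each $n\le r$ it exhibits an eigenvector $v\in\mathcal N(\lambda-P_BP_AP_B)$ with $\lambda>0$ orthogonal to $V_{n-1}$, builds $u:=P_A(v)/\|P_A(v)\|$, checks $u\in U_{n-1}^\perp$, and concludes $\sqrt{\lambda_n}\ge\langle u,v\rangle>0$; then for $n>r$ it argues by contradiction with the rank. Your two-sided dimension count ($m\le r$ from orthonormal eigenvectors in the image, $p-m\le\dim(B\cap A^\perp)=\dim B-r$ from Lemma~\ref{lem:NullSet:S}) is shorter and avoids the eigenvector construction, at the cost of invoking $\rank(P_AP_BP_A)=\rank(P_BP_AP_B)$ for the symmetric identity $\dim(A\cap B^\perp)=\dim A-r$.

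Second, for (ii)$\Rightarrow$(i) the paper actually computes $\Lambda_\#(P_{B_n}P_{A_n}P_{B_n})\setminus\{0\}=\{\lambda_n,\dots,\lambda_r\}$ by showing that the $v_i$, $i\ge n$, remain eigenvectors of the residual operator and that no further eigenvector can exist (again a rank contradiction). Your Cauchy--Schwarz bound $\langle u,v\rangle^2\le\|P_A(v)\|^2=\langle v,P_BP_AP_B v\rangle\le\lambda_n$ via the spectral expansion in the basis $v_1,\dots,v_r$ is more direct and entirely sufficient for the lemma; the paper's route yields the extra information about the residual spectrum, which your argument does not need.
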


\begin{remark}
  \begin{enumerate}[(i)]
    \item The Dixmier angle $\theta_D$ is the smallest principal angle and coincides with $\theta_1$. On the other hand, using Lemma \ref{lem:principal}, we see that the Friedrichs angle $\theta_F$ is the smallest nonzero principal angle and coincides with $\theta_f$ for $f := \dim(A \cap B)+1$. The proof of the latter statement can be found, for example, in \cite[Proposition 3.3]{BauschkeCruzNghiaPhanWang2016}.

    \item It follows from Lemma \ref{lem:principal} that $\lambda_n = \cos^2(\theta_n)$ are the only nonzero eigenvalues of $P_BP_AP_B$, $n = 1, \ldots, r$. This property can, in fact, be used in order to define the principal angles; see, for example, \cite[Definition 17]{GalantaiHegedus2006} for $\mathbb C^d$ and its generalization \cite[Section 2.2]{KnyazevJujunashviliArgentati2010} to an infinite dimensional Hilbert space. Equality \eqref{lem:principal:reciprocal} is attributed to Afriat \cite{Afriat1957} who called the vectors $u_n$ and $v_n$ \emph{reciprocals}. It can also be found, for example, in \cite[Eq. (6)]{Galantai2008} and in \cite[Eq. (13)]{GalantaiHegedus2006}.
  \end{enumerate}
\end{remark}

\section{Spectral Analysis}
\label{sec:spectral}
 Let again $A$ and $B$ be two nontrivial ($\neq \{0\}$, $\neq \mathbb R^d$) subspaces of $\mathbb R^d$.
In this section we apply the above-mentioned spectral theorem to the two symmetric products of projections $P_AP_BP_A$ and $P_BP_AP_B$, and to the simultaneous projection $\frac{P_A+P_B}{2}$.

\subsection{Nullspaces}
We begin by characterizing the nullspace of $P_BP_AP_B$. Analogously we can characterize the nullspace of $P_AP_BP_A$.

\begin{lemma} \label{lem:NullSet:S}
  We have
  \begin{equation}\label{lem:NullSet:S*S}
    \mathcal N(P_AP_B) = \mathcal N(P_BP_AP_B) = B^\perp \oplus (A^\perp \cap B).
  \end{equation}
\end{lemma}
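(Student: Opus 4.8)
The plan is to prove the two equalities in \eqref{lem:NullSet:S*S} separately, moving from the innermost operator outward. First I would establish $\mathcal N(P_AP_B) = \mathcal N(P_BP_AP_B)$. The inclusion $\mathcal N(P_AP_B) \subseteq \mathcal N(P_BP_AP_B)$ is immediate, since $P_AP_B(x) = 0$ forces $P_BP_AP_B(x) = P_B(0) = 0$. For the reverse inclusion, suppose $P_BP_AP_B(x) = 0$. The key observation is that $P_AP_B$ and $P_BP_AP_B$ are related through the self-adjointness of the projections: writing $y := P_AP_B(x)$, we have $\langle P_BP_AP_B(x), P_B(x)\rangle = \langle P_AP_B(x), P_B(x)\rangle = \langle P_AP_B(x), P_AP_B(x)\rangle = \|y\|^2$, where the middle step uses that $P_A$ is an orthogonal projection (so $\langle P_A z, w\rangle = \langle P_A z, P_A w\rangle$) and $P_B(x)$ is already in $B$. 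Hence $P_BP_AP_B(x) = 0$ implies $\|y\|^2 = 0$, i.e. $P_AP_B(x) = 0$. This gives the first equality.

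Next I would identify $\mathcal N(P_AP_B)$ with $B^\perp \oplus (A^\perp \cap B)$. Decompose an arbitrary $x \in \mathbb R^d$ as $x = P_B(x) + P_{B^\perp}(x)$. Then $P_AP_B(x) = P_A(P_B(x))$ depends only on $P_B(x) \in B$, so $P_AP_B(x) = 0$ if and only if $P_B(x) \in \mathcal N(P_A) = A^\perp$, that is, $P_B(x) \in A^\perp \cap B$. Since the $B^\perp$-component is unconstrained, this shows $x \in \mathcal N(P_AP_B)$ exactly when $x \in B^\perp \oplus (A^\perp \cap B)$, with the sum being orthogonal because $B^\perp \perp B \supseteq A^\perp \cap B$. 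Combining the two steps yields \eqref{lem:NullSet:S*S}.

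I do not anticipate a genuine obstacle here; the only thing requiring a little care is the direction $\mathcal N(P_BP_AP_B) \subseteq \mathcal N(P_AP_B)$, where one must resist the temptation to "cancel $P_B$" and instead exploit the orthogonality (self-adjointness and idempotency) of the projections via the inner-product computation above — this is the standard trick that $\mathcal N(T^*T) = \mathcal N(T)$ specialized to $T = P_AP_B$, noting $T^*T = P_BP_AP_B$. Everything else is a routine orthogonal decomposition.
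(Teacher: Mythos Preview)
Your proposal is correct and follows essentially the same approach as the paper: the first equality is obtained via the identity $\|P_AP_B(x)\|^2 = \langle P_BP_AP_B(x), x\rangle$ (your inner-product computation with $P_B(x)$ in the second slot is an equivalent variant of this, and you correctly identify it as the $\mathcal N(T^*T)=\mathcal N(T)$ trick), and the second equality is obtained from the orthogonal decomposition $x = P_{B^\perp}(x) + P_B(x)$ together with the observation that $P_AP_B(x)=0$ forces $P_B(x)\in A^\perp$.
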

\begin{proof}
  The inclusion $\mathcal N(P_BP_AP_B) \subset \mathcal N(P_AP_B)$ follows from the identity $\|P_AP_B(x)\|^2 = \langle P_BP_AP_B(x), x\rangle$. The opposite inclusion is obvious.

  The inclusion $B^\perp \oplus (A^\perp \cap B) \subset \mathcal N(P_AP_B)$ follows from the linearity of $P_AP_B$. On the other hand, for $x \in \mathcal N(P_AP_B)$, we get $P_B(x) = P_B(x) - P_AP_B(x) = P_{A^\perp}P_B(x)$. In particular, $P_B(x) \in A^\perp$. By combining this with the orthogonal decomposition theorem, we see that $x = P_{B^\perp}(x) + P_B(x) \in B^\perp \oplus (A^\perp \cap B)$.
\end{proof}

We now characterize the nullspace of the operator $\frac{P_A+P_B}{2}$.

\begin{lemma}\label{lem:NullSet:T}
  We have
  \begin{equation}\label{lem:NullSet:T:N}\textstyle
    \mathcal N(\frac{P_A+P_B}{2}) = A^\perp \cap B^\perp \subset \mathcal N(P_AP_B).
  \end{equation}
\end{lemma}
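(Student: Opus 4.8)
The plan is to prove the two claims in \eqref{lem:NullSet:T:N} separately: first the equality $\mathcal N(\frac{P_A+P_B}{2}) = A^\perp \cap B^\perp$, and then the inclusion into $\mathcal N(P_AP_B)$.

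First I would establish $A^\perp \cap B^\perp \subset \mathcal N(\frac{P_A+P_B}{2})$, which is immediate: if $x \in A^\perp \cap B^\perp$, then $P_A(x) = 0$ and $P_B(x) = 0$, so $\frac{P_A+P_B}{2}(x) = 0$. For the reverse inclusion, suppose $\frac{P_A+P_B}{2}(x) = 0$, i.e. $P_A(x) = -P_B(x)$. The key observation is that $P_A$ and $P_B$ are orthogonal projections, hence positive semidefinite and self-adjoint. Taking the inner product with $x$ gives $\langle P_A(x), x\rangle + \langle P_B(x), x\rangle = 0$. Since $\langle P_A(x), x\rangle = \|P_A(x)\|^2 \geq 0$ and $\langle P_B(x), x\rangle = \|P_B(x)\|^2 \geq 0$, both terms must vanish, so $P_A(x) = 0$ and $P_B(x) = 0$, which means $x \in A^\perp \cap B^\perp$. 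This settles the equality.

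For the inclusion $A^\perp \cap B^\perp \subset \mathcal N(P_AP_B)$, I would simply note that $A^\perp \cap B^\perp = A^\perp \cap B^\perp \subset B^\perp \oplus (A^\perp \cap B) = \mathcal N(P_AP_B)$ by Lemma \ref{lem:NullSet:S}; alternatively, directly: if $x \in A^\perp \cap B^\perp$ then $P_B(x) = 0$, so $P_AP_B(x) = 0$.

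I do not anticipate any real obstacle here; the only point requiring a moment's thought is the reverse inclusion for the equality, where the trick is to use the positivity of $P_A$ and $P_B$ (via $\langle P_A x, x\rangle = \|P_Ax\|^2$) to deduce that a sum of two nonnegative quantities being zero forces each to be zero — in particular one cannot have the nontrivial cancellation $P_A(x) = -P_B(x) \neq 0$. Everything else is a one-line verification.
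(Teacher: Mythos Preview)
Your proof is correct and follows essentially the same approach as the paper: for the nontrivial inclusion you take the inner product with $x$ and use $\langle P_A(x),x\rangle = \|P_A(x)\|^2 \ge 0$ (and similarly for $P_B$) to force $P_A(x)=P_B(x)=0$, exactly as the paper does. The paper does not spell out the final inclusion $A^\perp\cap B^\perp \subset \mathcal N(P_AP_B)$, but your one-line verification is fine.
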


\begin{proof}
The inclusion  $A^\perp \cap B^\perp \subset \mathcal N(\frac{P_A+P_B}{2})$  is trivial. On the other hand, for $x \in \mathcal N(\frac{P_A+P_B}{2})$,  we have
\begin{equation} \textstyle
  0 = \left\langle x, \frac{P_A+P_B}{2}(x)\right\rangle
  = \frac 1 2 \left\langle x, P_A(x)\right\rangle
  + \frac 1 2 \left\langle x, P_B(x)\right\rangle
  = \frac 1 2 \|P_A(x)\|^2 + \frac 1 2 \|P_B(x)\|^2.
\end{equation}
 Consequently, $P_A(x) = P_B(x) = 0$, which leads to $x \in A^\perp \cap B^\perp$.
\end{proof}

Next, we devote more attention to the eigenspace $\mathcal N(\frac 1 2 - \frac{P_A+P_B}{2})$.

\begin{lemma}\label{lem:EigenSet12:T}
We have
\begin{equation} \label{lem:EigenSet12:T:Set} \textstyle
  \mathcal N(\frac 1 2 - \frac{P_A+P_B}{2}) = (A^\perp \cap B) \oplus (A \cap B^\perp)
  \subset \mathcal N(P_AP_B).
\end{equation}
\end{lemma}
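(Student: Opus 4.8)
The plan is to prove the set equality by a straightforward double inclusion, together with a one-line computation for the containment in $\mathcal N(P_AP_B)$; no spectral machinery is needed here. Throughout I would abbreviate $S := (A^\perp \cap B) \oplus (A \cap B^\perp)$ and $E := \mathcal N(\frac 1 2 - \frac{P_A+P_B}{2})$, and first record that the displayed sum really is a direct (indeed orthogonal) sum: any vector lying in both $A^\perp \cap B$ and $A \cap B^\perp$ lies in $A \cap A^\perp = \{0\}$, so $S$ is well-defined and its two summands are mutually orthogonal.

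For the inclusion $S \subseteq E$ (and, at the same time, $S \subseteq \mathcal N(P_AP_B)$) I would take $x = u + v$ with $u \in A^\perp \cap B$ and $v \in A \cap B^\perp$. Because $u \in B$, $v \in B^\perp$, $u \in A^\perp$ and $v \in A$, the orthogonal projections split as $P_B(x) = u$ and $P_A(x) = v$. Hence $(P_A + P_B)(x) = u + v = x$, which is exactly $x \in E$; and $P_AP_B(x) = P_A(u) = 0$ since $u \in A^\perp$, which gives $x \in \mathcal N(P_AP_B)$.

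For the reverse inclusion $E \subseteq S$ the key idea is to feed the eigen-equation back into the projections rather than trying to decompose $x$ directly. Suppose $P_A(x) + P_B(x) = x$ and set $a := P_A(x)$, $b := P_B(x)$, so $a + b = x$. Applying $P_A$ to both sides and using $P_A^2 = P_A$ gives $a + P_A(b) = a$, i.e.\ $P_A(b) = 0$, so $b \in A^\perp$; combined with $b \in B$ this yields $b \in A^\perp \cap B$. Symmetrically, applying $P_B$ gives $P_B(a) = 0$, hence $a \in A \cap B^\perp$. Therefore $x = a + b \in (A \cap B^\perp) + (A^\perp \cap B) = S$, which completes the proof. (Equivalently, one may note that $\frac 1 2 - \frac{P_A+P_B}{2} = \frac 1 2 (P_{A^\perp} - P_B)$, so $x \in E$ iff $P_{A^\perp}(x) = P_B(x)$; then $P_A(x) = x - P_{A^\perp}(x) = x - P_B(x) = P_{B^\perp}(x) \in A \cap B^\perp$ and $P_{A^\perp}(x) = P_B(x) \in A^\perp \cap B$, and $x = P_A(x) + P_{A^\perp}(x)$ exhibits $x$ as an element of $S$.)

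Since every step is elementary, there is no genuine obstacle here; the only point that deserves a moment's care is the inclusion $E \subseteq S$, where one should decompose the projections $P_A(x)$ and $P_B(x)$ rather than $x$ itself. The inclusion $S \subseteq \mathcal N(P_AP_B)$ then comes for free from the same computation used for $S \subseteq E$.
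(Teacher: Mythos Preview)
Your proof is correct and follows essentially the same approach as the paper: both establish the set equality by a double inclusion, with the reverse inclusion obtained by rewriting the eigen-equation as $x = P_A(x) + P_B(x)$ and then reading off $P_A(x) \in A \cap B^\perp$ and $P_B(x) \in A^\perp \cap B$ (your parenthetical alternative is verbatim the paper's argument). The only minor difference is that for the containment in $\mathcal N(P_AP_B)$ you verify it directly via $P_AP_B(u+v)=P_A(u)=0$, whereas the paper invokes the previously established identity $\mathcal N(P_AP_B)=B^\perp\oplus(A^\perp\cap B)$; both are immediate.
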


\begin{proof}
It is not difficult to see that for $x = a + b$, where $a \in A\cap B^\perp$ and $b \in B \cap A^\perp$, we get $\frac{P_A+P_B}{2}(x) = \frac 1 2 x$. Consequently, $(A\cap B^\perp) \oplus (B \cap A^\perp) \subset \mathcal N(\frac 1 2 - \frac{P_A+P_B}{2})$. On the other hand, for $x \in \mathcal N(\frac 1 2 - \frac{P_A+P_B}{2})$, we get $x =  P_A(x) +  P_B(x)$. Consequently, $ P_A(x) = x - P_B(x) \in B^\perp$ and $P_B(x) = x - P_A(x) \in A^\perp$. This shows that $x \in (A \cap B^\perp) \oplus (A^\perp \cap B)$ and proves \eqref{lem:EigenSet12:T:Set}. The inclusions of \eqref{lem:EigenSet12:T:Set} follows from Lemma \ref{lem:NullSet:S}.
\end{proof}

We recall the following fact regarding commuting projections from \cite[Lemma 9.2]{Deutsch2001}.

\begin{fact} \label{fact:commutingProj}
  We have $P_A P_B = P_B P_A$ if and only if $P_AP_B = P_{A \cap B}$. In particular, if $A \subset B$, then $P_A P_B = P_B P_A$ but also $P_{A^\perp} P_B = P_BP_{A^\perp}$ (since $P_{A^\perp} = I - P_A$).  Furthermore,
  \begin{equation}\label{}
    P_A P_{A \cap B} = P_{A \cap B} P_A = P_{A \cap B} \quad \text{and} \quad
    P_B P_{A \cap B} = P_{A \cap B} P_B = P_{A \cap B}.
  \end{equation}
\end{fact}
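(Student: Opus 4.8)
The plan is to reduce the whole statement to one elementary principle: an operator on $\mathbb R^d$ that is both idempotent and self-adjoint coincides with the orthogonal projection onto its range (a consequence of the orthogonal decomposition theorem, or of the spectral considerations of Section~\ref{sec:spectralTheorem}). With this in hand, I would first settle the equivalence. The direction ``$P_AP_B = P_{A\cap B} \Rightarrow P_AP_B = P_BP_A$'' is instant, since orthogonal projections are self-adjoint: $P_AP_B = P_{A\cap B} = P_{A\cap B}^{*} = (P_AP_B)^{*} = P_BP_A$. For the converse, set $Q := P_AP_B = P_BP_A$. Then $Q^{*} = (P_AP_B)^{*} = P_BP_A = Q$, and, using $P_A^2 = P_A$, $P_B^2 = P_B$ and commutativity, $Q^2 = P_AP_BP_AP_B = P_AP_AP_BP_B = P_AP_B = Q$; hence $Q$ is the orthogonal projection onto $\image Q$. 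Finally $\image Q = A \cap B$: the inclusion $\image Q \subseteq A \cap B$ follows from $\image(P_AP_B) \subseteq A$ and $\image(P_BP_A) \subseteq B$, while for $x \in A \cap B$ one has $Qx = P_A P_B x = P_A x = x$; thus $Q = P_{A\cap B}$.

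Next, for the inclusion case $A \subseteq B$, I would first prove that $P_AP_B = P_A$. Writing an arbitrary $x$ as $x = P_Bx + P_{B^\perp}x$ and using that $A \subseteq B$ forces $B^\perp \subseteq A^\perp$, we get $P_AP_{B^\perp}x = 0$, hence $P_Ax = P_AP_Bx$; on the other hand $P_BP_Ax = P_Ax$ because $P_Ax \in A \subseteq B$. Therefore $P_AP_B = P_A = P_BP_A$. The assertion $P_{A^\perp}P_B = P_BP_{A^\perp}$ then follows by substituting $P_{A^\perp} = I - P_A$ and expanding: $P_{A^\perp}P_B = P_B - P_AP_B = P_B - P_BP_A = P_BP_{A^\perp}$.

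Finally, the ``furthermore'' identities are nothing but the previous paragraph applied to the pair $(A \cap B, A)$, respectively $(A \cap B, B)$, in place of $(A,B)$: since $A \cap B \subseteq A$ we obtain $P_{A\cap B}P_A = P_{A\cap B} = P_AP_{A\cap B}$, and symmetrically with $B$. There is no real obstacle in any of this; the only step deserving a moment's attention is the passage from ``self-adjoint idempotent'' to ``orthogonal projection onto the range'', which is exactly the kind of consequence of the spectral theorem recorded earlier.
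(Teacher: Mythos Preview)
Your argument is correct in every part: the ``$\Leftarrow$'' direction via self-adjointness, the ``$\Rightarrow$'' direction via showing $Q=P_AP_B$ is a self-adjoint idempotent with range $A\cap B$, the inclusion case via $P_AP_B=P_A=P_BP_A$, and the final identities by specializing to the pairs $(A\cap B,A)$ and $(A\cap B,B)$. Note, however, that the paper does not actually prove this fact; it merely records it and cites \cite[Lemma 9.2]{Deutsch2001}. So there is no ``paper's own proof'' to compare your approach against --- you have supplied a clean, self-contained proof where the paper chose to defer to the literature.
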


 By combining Lemmata \ref{lem:NullSet:S}--\ref{lem:EigenSet12:T} with Lemma \ref{lem:spectralShift}, we arrive at:
\begin{corollary}\label{cor:NullSets}
  We have
  \begin{equation}\label{cor:NullSets:S}
    \mathcal N(P_AP_B - P_{A \cap B})
    = \mathcal N(P_BP_AP_B - P_{A \cap B})
    = B^\perp \oplus (A^\perp \cap B) \oplus (A \cap B).
  \end{equation}   \
  Moreover,
  \begin{equation}\label{cor:NullSets:T}\textstyle
    \mathcal N(\frac{P_A+P_B}{2} - P_{A \cap B}) = (A^\perp \cap B^\perp) \oplus (A \cap B)
    \subset \mathcal N(P_AP_B - P_{A \cap B})
  \end{equation}
  and
  \begin{equation}\label{cor:NullSets:T12} \textstyle
    \mathcal N(\frac 1 2 - \frac{P_A+P_B}{2}) \subset \mathcal N(P_AP_B - P_{A \cap B}).
  \end{equation}
\end{corollary}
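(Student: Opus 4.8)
The plan is to deduce Corollary~\ref{cor:NullSets} from the three preceding nullspace/eigenspace computations (Lemmata~\ref{lem:NullSet:S}, \ref{lem:NullSet:T}, \ref{lem:EigenSet12:T}) by invoking Lemma~\ref{lem:spectralShift} with $\lambda = 1$. First I would observe that for a self-adjoint operator $T$, the shifted operator $T_1 = T - P_{\fix T}$ satisfies $\mathcal N(T_1) = \mathcal N(T) \oplus \mathcal N(1-T) = \mathcal N(T) \oplus \fix T$, which is exactly equation~\eqref{lem:spectralShift:NandE1} specialized to $\lambda = 1$. So to get \eqref{cor:NullSets:S} I want to apply this with $T = P_BP_AP_B$ (which is self-adjoint as the composition $P_B P_A P_B$ with $P_A, P_B$ self-adjoint idempotents). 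I need two ingredients: that $\fix(P_BP_AP_B) = A \cap B$, and that $\mathcal N(P_BP_AP_B) = B^\perp \oplus (A^\perp \cap B)$, the latter being exactly Lemma~\ref{lem:NullSet:S}. For the fixed-point set: if $x \in A \cap B$ then $P_B P_A P_B x = P_B P_A x = P_B x = x$; conversely if $P_B P_A P_B x = x$ then taking norms and using $\|P_B\| = \|P_A\| = 1$ forces $\|P_A P_B x\| = \|P_B x\| = \|x\|$, hence $P_B x = x$ (so $x \in B$) and $P_A x = x$ (so $x \in A$); thus $\fix(P_BP_AP_B) = A \cap B$. Then \eqref{lem:spectralShift:NandE1} gives $\mathcal N(P_BP_AP_B - P_{A\cap B}) = \mathcal N(P_BP_AP_B) \oplus (A \cap B) = B^\perp \oplus (A^\perp \cap B) \oplus (A \cap B)$. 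The equality with $\mathcal N(P_AP_B - P_{A\cap B})$ should follow either by the same argument applied to $P_AP_B$ directly, or — cleaner — by noting $\|P_AP_B(x) - P_{A\cap B}(x)\|^2$ relates to $\langle (P_BP_AP_B - P_{A\cap B})(x), x\rangle$ via $P_{A\cap B}$ being a projection commuting with $P_A$ and $P_B$ (Fact~\ref{fact:commutingProj}), mirroring the first line of the proof of Lemma~\ref{lem:NullSet:S}; I will spell out the short computation $\langle (P_BP_AP_B - P_{A\cap B})x, x\rangle = \|P_AP_B x\|^2 - \|P_{A\cap B}x\|^2 = \|P_A P_B x - P_{A\cap B} x\|^2$, using $P_{A\cap B} P_A P_B = P_{A\cap B}$ from Fact~\ref{fact:commutingProj}.

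For \eqref{cor:NullSets:T} I would apply the same $\lambda = 1$ specialization of \eqref{lem:spectralShift:NandE1} to $T = \frac{P_A+P_B}{2}$ (self-adjoint as an average of self-adjoint operators). Here Lemma~\ref{lem:NullSet:T} gives $\mathcal N(\frac{P_A+P_B}{2}) = A^\perp \cap B^\perp$, and I need $\fix(\frac{P_A+P_B}{2}) = A \cap B$: the inclusion $\supset$ is immediate since $x \in A \cap B$ gives $\frac{P_A+P_B}{2}x = \frac{x+x}{2} = x$; for $\subset$, if $\frac{P_A+P_B}{2}x = x$ then $x = \frac{1}{2}P_A x + \frac{1}{2}P_B x$, so $P_{(A+B)^\perp}x = 0$, i.e.\ $x \in A+B$, and then a standard argument (or: $\|x\| \le \frac12\|P_Ax\| + \frac12\|P_Bx\| \le \|x\|$ forcing equality in both, hence $P_A x = P_B x = x$) yields $x \in A \cap B$. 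Then \eqref{lem:spectralShift:NandE1} gives $\mathcal N(\frac{P_A+P_B}{2} - P_{A\cap B}) = (A^\perp \cap B^\perp) \oplus (A \cap B)$. The claimed inclusion into $\mathcal N(P_AP_B - P_{A\cap B})$ then just compares the two explicit direct sums: $A^\perp \cap B^\perp \subset B^\perp$ and $A \cap B \subset A \cap B$, both landing inside $B^\perp \oplus (A^\perp \cap B) \oplus (A \cap B)$.

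Finally \eqref{cor:NullSets:T12} is the most elementary: by Lemma~\ref{lem:EigenSet12:T}, $\mathcal N(\frac12 - \frac{P_A+P_B}{2}) = (A^\perp \cap B) \oplus (A \cap B^\perp)$, and since $A^\perp \cap B \subset A^\perp \cap B$ and $A \cap B^\perp \subset B^\perp$, this whole space sits inside $B^\perp \oplus (A^\perp \cap B) \oplus (A \cap B) = \mathcal N(P_AP_B - P_{A\cap B})$ by \eqref{cor:NullSets:S}. I expect no real obstacle here; the only mild subtlety is making sure the direct-sum decompositions are genuinely orthogonal (so that ``$\oplus$'' is justified) — this follows because $B^\perp$, $A^\perp \cap B$, and $A \cap B$ are mutually orthogonal (the first is orthogonal to the latter two since they lie in $B$; and $A^\perp \cap B \perp A \cap B$ since one is in $A^\perp$ and the other in $A$) — and being careful that $P_{A\cap B}$ commutes with $P_A$ and $P_B$, which is Fact~\ref{fact:commutingProj} and is what lets the $\lambda=1$ shift $T_1 = T - P_{A\cap B}$ coincide with $T - P_{\fix T}$ in each case. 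The bulk of the work is really just bookkeeping of which small subspaces appear in which sum.
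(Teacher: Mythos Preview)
Your proposal is correct and follows essentially the same route as the paper: apply Lemma~\ref{lem:spectralShift} with $\lambda=1$ to $T=P_BP_AP_B$ and to $T=\frac{P_A+P_B}{2}$ (after identifying $\fix T = A\cap B$ in each case), then invoke Lemmata~\ref{lem:NullSet:S}--\ref{lem:EigenSet12:T}. The only cosmetic differences are that the paper obtains the inclusion $\mathcal N(P_AP_B-P_{A\cap B})\subset\mathcal N(P_BP_AP_B-P_{A\cap B})$ by left-multiplying with $P_B$ (using Fact~\ref{fact:commutingProj}) rather than via your norm identity, and it cites the inclusions in \eqref{cor:NullSets:T}--\eqref{cor:NullSets:T12} directly from the inclusions already recorded in Lemmata~\ref{lem:NullSet:T} and~\ref{lem:EigenSet12:T} instead of re-comparing the explicit direct sums.
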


\begin{proof}
  Using Fact \ref{fact:commutingProj}, we have $P_B(P_AP_B - P_{A \cap B}) = P_BP_AP_B - P_{A \cap B}$. This shows the inclusion $\mathcal N(P_AP_B - P_{A \cap B}) \subset \mathcal N(P_BP_AP_B - P_{A \cap B})$. On the other hand, by again referring to Fact \ref{fact:commutingProj}, we have
  \begin{align}\label{} \nonumber
    \|P_AP_B(x) - P_{A \cap B}(x)\|^2
    & =  \langle (P_AP_B - P_{A \cap B})^*\cdot(P_AP_B - P_{A \cap B})(x), x\rangle  \\
    & = \langle P_BP_AP_B(x) - P_{A \cap B}(x), x\rangle,
  \end{align}
  which shows the opposite inclusion. In view of Lemma \ref{lem:spectralShift} and the identity $\mathcal N(1-P_BP_AP_B) = A \cap B$, we get $\mathcal N(P_BP_AP_B - P_{A \cap B}) = \mathcal N(P_BP_AP_B) \oplus (A \cap B)$. Analogously, the identity $\mathcal N(1-\frac{P_A+P_B}{2}) = A \cap B$ leads to $\mathcal N(\frac{P_A+P_B}{2} - P_{A \cap B}) = \mathcal N(\frac{P_A+P_B}{2}) \oplus (A \cap B)$. It now suffices to use Lemmata \ref{lem:NullSet:S}--\ref{lem:EigenSet12:T}.
\end{proof}

\subsection{Orthogonal Projections}

In Lemmata \ref{lem:proj1}--\ref{lem:proj3} we provide formulas for the orthogonal projections onto the nullspaces and eigenspaces of the operators $P_BP_AP_B$ and $\frac{P_A+P_B}{2}$. We emphasize that all of these formulas involve only the basic projections $P_A$ and $P_B$. By switching the roles between $A$ and $B$, we can easily deduce analogous formulas for projections corresponding to $P_AP_BP_A$.

\begin{lemma}\label{lem:proj1}
  Let $\lambda \in (0,1)$ and assume that $u \in \mathcal N(\lambda - P_AP_BP_A)$ is nonzero. Then,
  \begin{equation}\label{lem:proj1:PN}
    P_{\mathcal N(P_BP_AP_B)}(u) = P_{B^\perp}(u) \neq 0
  \end{equation}
  and, for any $\sigma \in (0,1]$, we have
  \begin{equation}\label{lem:proj1:PE}
    P_{\mathcal N(\sigma - P_BP_AP_B)}(u) =
    \begin{cases}
      P_B(u) \neq 0, & \text{if } \sigma = \lambda\\
      0, & \mbox{otherwise}.
    \end{cases}
  \end{equation}
\end{lemma}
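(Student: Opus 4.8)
The plan is to use Afriat's reciprocity relation from Lemma~\ref{lem:principal0} together with the uniqueness part of the spectral theorem (Lemma~\ref{lem:uniqueness}).

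First I would fix a nonzero $u \in \mathcal N(\lambda - P_AP_BP_A)$ with $\lambda \in (0,1)$, and set $v := P_B(u)$. The key computation is to show that $v$ is an eigenvector of $P_BP_AP_B$ at eigenvalue $\lambda$: indeed $P_BP_AP_B(v) = P_BP_AP_B P_B(u) = P_B(P_AP_BP_A)(u) \cdot (\text{after noting } P_BP_B = P_B \text{ and } P_AP_B P_A \text{ acts as } \lambda)$ — more carefully, $P_B P_A P_B v = P_B P_A P_B P_B u = P_B (P_A P_B P_A) u$ requires care since $P_A P_B P_B u = P_A P_B u$, not obviously $(P_A P_B P_A)u$. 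The clean way is: $P_B P_A P_B v = P_B P_A P_B u$. Now decompose $u = P_A u$ (since $u \in \image P_A$, as $u = \frac{1}{\lambda}P_A P_B P_A u \in A$), so $P_A P_B u = P_A P_B P_A u = \lambda u$, hence $P_B P_A P_B v = P_B(\lambda u) = \lambda P_B u = \lambda v$. So $v \in \mathcal N(\lambda - P_BP_AP_B)$. It remains to check $v \ne 0$: if $P_B(u) = 0$ then $P_A P_B P_A u = P_A P_B u = 0$, contradicting $\lambda u \ne 0$. Similarly $P_{B^\perp}(u) = u - v \ne 0$, since $u - v = 0$ would force $u = P_B u \in B$, whence $\lambda u = P_A P_B u = P_A u = u$, contradicting $\lambda \ne 1$.

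Next I would write the spectral decomposition $u = P_{B^\perp}(u) + P_B(u)$, which is exactly the orthogonal decomposition of $u \in \mathbb R^d = B^\perp \oplus B$. I claim this is the spectral representation of $u$ for the operator $T := P_BP_AP_B$: by Lemma~\ref{lem:NullSet:S} we have $P_{B^\perp}(u) \in B^\perp \subset \mathcal N(P_BP_AP_B) = \mathcal N(T)$, and we just showed $P_B(u) \in \mathcal N(\lambda - T)$. Since $\lambda \ne 0$, Lemma~\ref{lem:uniqueness} applied to $T$ and this representation yields $P_{\mathcal N(T)}(u) = P_{B^\perp}(u)$, $P_{\mathcal N(\lambda - T)}(u) = P_B(u)$, and $P_{\mathcal N(\sigma - T)}(u) = 0$ for every $\sigma \notin \{0, \lambda\}$. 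This gives \eqref{lem:proj1:PN} and \eqref{lem:proj1:PE} at once, the nonvanishing assertions having been verified above (and for $\sigma = 1 \ne \lambda$ the projection is $0$, which is consistent with $1 \in (0,1]$ being allowed in the statement).

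The only mild obstacle is the bookkeeping of which projections commute and the verification that $u \in A$; everything hinges on $u$ lying in the range of $P_A$ so that $P_A P_B u = P_A P_B P_A u = \lambda u$. Once that is in place the reciprocity $P_B(u) \in \mathcal N(\lambda - P_BP_AP_B)$ and the two nonvanishing claims are short, and uniqueness finishes the proof. I expect no essential difficulty beyond keeping the case $\sigma = \lambda$ versus $\sigma \ne \lambda$ straight and recording that $\lambda \in (0,1)$ (so $\lambda \ne 0$ and $\lambda \ne 1$) is used in both nonvanishing arguments.
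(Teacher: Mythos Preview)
Your proposal is correct and follows essentially the same route as the paper: write $u = P_{B^\perp}(u) + P_B(u)$, verify $P_{B^\perp}(u)\in\mathcal N(P_BP_AP_B)$ and $P_B(u)\in\mathcal N(\lambda - P_BP_AP_B)$ via $u=P_A(u)$, check both pieces are nonzero, and invoke Lemma~\ref{lem:uniqueness}. The only cosmetic difference is that the paper establishes nonvanishing by computing $\|P_B(u)\|^2=\lambda\|u\|^2$ and $\|P_{B^\perp}(u)\|^2=(1-\lambda)\|u\|^2$ explicitly, whereas you argue by contradiction; the quantitative version is later reused (e.g.\ in Lemma~\ref{lem:orthogonality}), but for the present lemma either works.
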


\begin{proof}
  We show that the spectral representation of $u$ for $P_BP_AP_B$ takes the form
  \begin{equation}\label{pr:proj1:SDu}
     u\quad = \underbrace{P_{B^\perp}(u)}_{\substack{\in \mathcal N(P_BP_AP_B) \\ \neq 0}} \quad + \quad \underbrace{P_B(u)}_{\substack{\in \mathcal N(\lambda - P_BP_AP_B) \\ \neq 0}}.
  \end{equation}
  It is not difficult to see that $P_BP_AP_B(P_{B^\perp}(u)) = 0$. Knowing that $u = P_A(u)$, we have
  \begin{equation}\label{}
    P_BP_AP_B(P_B(u)) = P_B(P_AP_BP_A(u)) = \lambda P_B(u).
  \end{equation}
  Moreover,
  \begin{equation}\label{pr:proj1:norm}
    \|P_B(u)\|^2 = \|P_BP_A(u)\|^2
    = \langle P_AP_BP_A(u), u\rangle
    = \lambda \|u\|^2 \neq 0.
  \end{equation}
  Since $\lambda < 1$, we also get
  \begin{equation}\label{}
    \|P_{B^\perp}(u)\|^2 = \|u\|^2 - \|P_B(u)\|^2  =  (1-\lambda) \|u\|^2 \neq 0.
  \end{equation}
  It now suffices to apply Lemma \ref{lem:uniqueness} to $P_BP_AP_B$ and \eqref{pr:proj1:SDu}.
\end{proof}

Surprisingly, a similar result holds for the eigenvectors of $\frac{P_A+P_B}{2}$.

\begin{lemma}\label{lem:proj2}
  Let $\mu \in (0,1)$ and assume that $w \in \mathcal N(\mu - \frac{P_A+P_B}{2})$ is nonzero. Then,
  \begin{equation}\label{lem:proj2:PN}
    P_{\mathcal N(P_BP_AP_B)}(w) =
    \begin{cases}
      P_{B^\perp}(w) \neq 0, & \text{if } \mu \neq \frac 1 2\\
      w \neq 0, & \text{if } \mu = \frac 1 2
    \end{cases}
  \end{equation}
  and, for any $\lambda \in (0,1]$, we have
  \begin{equation}\label{lem:proj2:PE}
    P_{\mathcal N(\lambda - P_BP_AP_B)}(w) =
    \begin{cases}
      P_B(w) \neq 0, &  \text{if } \lambda = (2\mu - 1)^2\\
      0,  &  \text{otherwise.}
    \end{cases}
  \end{equation}
\end{lemma}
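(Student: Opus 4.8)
The plan is to mimic the structure of the proof of Lemma \ref{lem:proj1}: exhibit explicitly the spectral representation of $w$ for the operator $P_BP_AP_B$ and then invoke Lemma \ref{lem:uniqueness}. The key algebraic fact to establish first is that for an eigenvector $w \in \mathcal N(\mu - \frac{P_A+P_B}{2})$, the vector $P_B(w)$ is an eigenvector of $P_BP_AP_B$ associated with the eigenvalue $(2\mu-1)^2$, while $P_{B^\perp}(w)$ lies in $\mathcal N(P_BP_AP_B)$. Starting from $\mu w = \frac12(P_A(w)+P_B(w))$, I would first derive the two companion identities $P_A(w) = (2\mu-1)w + (1-\mu)\cdot 2 P_{B^\perp}(w)$-type relations; more carefully, writing $w = P_B(w) + P_{B^\perp}(w)$ and applying $P_B$ and $I - P_B$ to the eigenvalue equation should yield, after a short computation, $P_BP_A(w) = (2\mu-1)P_B(w)$ and $P_{B^\perp}P_A(w) = -(2\mu-1)P_{B^\perp}(w) + 2(1-\mu) \cdot(\text{something})$; the cleanest route is probably to show directly that $P_A$ restricted appropriately behaves like multiplication, i.e. that $w$ also solves a quadratic relation. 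I expect the identity $P_BP_AP_B(w) = (2\mu-1)^2 P_B(w)$ to drop out by applying $P_BP_AP_B$ to $w$ and iterating the eigenvalue equation twice, using $P_A^2 = P_A$, $P_B^2 = P_B$ along the way.

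Concretely, the natural computation is: from $2\mu w = P_A(w) + P_B(w)$ one gets $P_B P_A(w) = 2\mu P_B(w) - P_B(w) = (2\mu-1)P_B(w)$, hence $P_B P_A P_B(w) = P_B P_A(w) - P_B P_A P_{B^\perp}(w)$. So the crux is to evaluate $P_BP_AP_{B^\perp}(w)$, equivalently $P_BP_A(w - P_B(w))$. Applying $I - P_B$ to the eigenvalue equation gives $2\mu P_{B^\perp}(w) = P_{B^\perp}P_A(w)$, i.e. $P_A(w) = P_B P_A(w) + 2\mu P_{B^\perp}(w) = (2\mu-1)P_B(w) + 2\mu P_{B^\perp}(w)$; applying $P_A$ to this (and using $P_A(w) = P_A^2(w)$, $P_A P_{B^\perp}(w) = P_A(w) - P_A P_B(w)$) should close the loop and produce $P_AP_B(w)$ in terms of $w$, from which $P_BP_AP_B(w) = (2\mu-1)^2 P_B(w)$ follows. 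Likewise $P_BP_AP_B(P_{B^\perp}(w)) = P_BP_A(w) - P_BP_AP_B(w) = (2\mu-1)P_B(w) - (2\mu-1)^2 P_B(w) = (2\mu-1)(1-(2\mu-1))P_B(w)$, which is not obviously zero — so I will instead argue directly that $P_{B^\perp}(w) \in \mathcal N(P_AP_B) = \mathcal N(P_BP_AP_B)$ by the decomposition $\mathcal N(P_AP_B) = B^\perp \oplus (A^\perp \cap B)$ together with the structural fact that $w - P_B(w) \in B^\perp$ trivially, which already gives $P_{B^\perp}(w) \in B^\perp \subset \mathcal N(P_BP_AP_B)$ immediately. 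That last observation actually makes that half trivial.

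Once $w = P_{B^\perp}(w) + P_B(w)$ is established as a sum of a vector in $\mathcal N(P_BP_AP_B)$ and a vector in $\mathcal N((2\mu-1)^2 - P_BP_AP_B)$, Lemma \ref{lem:uniqueness} applied to $T = P_BP_AP_B$ gives all the projection formulas at once: $P_{\mathcal N(\lambda - P_BP_AP_B)}(w) = P_B(w)$ if $\lambda = (2\mu-1)^2$ and $0$ for every other $\lambda \in (0,1]$, and $P_{\mathcal N(P_BP_AP_B)}(w) = P_{B^\perp}(w)$. The remaining points are the nonvanishing and nondegeneracy claims and the split according to whether $\mu = \tfrac12$. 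For $\mu \neq \tfrac12$ one has $(2\mu-1)^2 \in (0,1)$, $P_B(w) \neq 0$ (otherwise $w = P_{B^\perp}(w) \in B^\perp$ would force $P_A(w) = 2\mu w$ with $\mu \neq \tfrac12$, but $P_A(w) \in A$ and $\|P_A(w)\| \le \|w\|$ forces $\mu \le \tfrac12$, and a short argument excludes $\mu < \tfrac12$ as well since then $w \in \mathcal N(\tfrac12 - \frac{P_A+P_B}{2})$ by Lemma \ref{lem:EigenSet12:T} — contradiction), and similarly $P_{B^\perp}(w) \neq 0$; for $\mu = \tfrac12$, Lemma \ref{lem:EigenSet12:T} gives $w \in (A^\perp \cap B) \oplus (A \cap B^\perp) \subset \mathcal N(P_AP_B)$, so the whole of $w$ lands in the nullspace and $(2\mu-1)^2 = 0$, matching the stated formulas with the convention that the eigenvalue case $\lambda \in (0,1]$ never triggers. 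I expect the main obstacle to be the bookkeeping in verifying $P_BP_AP_B(w) = (2\mu-1)^2 P_B(w)$ cleanly and, secondarily, pinning down the nonvanishing of $P_B(w)$ and $P_{B^\perp}(w)$ in the case $\mu \neq \tfrac12$ without circularity; everything else is a direct appeal to Lemma \ref{lem:uniqueness}, Lemma \ref{lem:NullSet:S} and Lemma \ref{lem:EigenSet12:T}.
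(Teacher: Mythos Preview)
Your approach is essentially the paper's: decompose $w = P_{B^\perp}(w) + P_B(w)$, show these lie in $\mathcal N(P_BP_AP_B)$ and $\mathcal N((2\mu-1)^2 - P_BP_AP_B)$ respectively, then invoke Lemma~\ref{lem:uniqueness}. The paper streamlines your two detours: for the eigenvalue identity it simply applies $P_A$ and $P_B$ directly to $2\mu w = P_A(w)+P_B(w)$ to obtain $P_AP_B(w)=(2\mu-1)P_A(w)$ and $P_BP_A(w)=(2\mu-1)P_B(w)$ in one line, and for nonvanishing it computes the exact norms $\|P_B(w)\|^2=\mu\|w\|^2$ and $\|P_{B^\perp}(w)\|^2=(1-\mu)\|w\|^2$ rather than arguing by contradiction.
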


\begin{proof}
  Assume first that $\mu = \frac 1 2$. Then, Lemma \ref{lem:NullSet:T} leads to $w \in \mathcal N(P_BP_AP_B)$. Consequently, Lemma \ref{lem:uniqueness} yields $P_{\mathcal N(P_BP_AP_B)}(w) = w$ and $P_{\mathcal N(\lambda - P_BP_AP_B)}(w) = 0$.
  Assume now that $\mu \neq \frac 1 2$. We claim that  the spectral representation of $w$ for $P_BP_AP_B$ is given by
  \begin{equation}\label{pr:proj2:SDw}
     w\quad = \underbrace{P_{B^\perp}(w)}_{\substack{\in \mathcal N(P_BP_AP_B) \\ \neq 0}} \quad+ \underbrace{P_B(w)}_{\substack{\in \mathcal N\big((2\mu - 1)^2 - P_BP_AP_B\big) \\ \neq 0}}.
  \end{equation}
  It is not difficult to see that $P_BP_AP_B(P_{B^\perp}(w)) = 0$. On the other hand, by applying $P_A$ and $P_B$ to both sides of the equality
  \begin{equation}\label{pr:proj2:w}
    2 \mu w = P_A(w) + P_B(w),
  \end{equation}
  we obtain
  \begin{equation}\label{pr:proj2:PAPBw}
    P_AP_B(w) =  (2\mu - 1) P_A(w) \quad \text{and} \quad P_BP_A(w) = (2\mu - 1) P_B(w).
  \end{equation}
  Consequently, we get
  \begin{equation}\label{pr:proj2:PBPAPBw}
    P_BP_AP_B(P_B(w)) = (2\mu - 1) P_B P_A(w) = (2\mu - 1)^2 P_B(w).
  \end{equation}
  We show that $P_B(w)$ and $P_{B^\perp}(w)$ are nonzero. Indeed, by \eqref{pr:proj2:PAPBw},
  \begin{equation}\label{pr:proj2:PAwPBw}
    \langle P_A(w), P_B(w)\rangle =
    \begin{cases}
      \langle P_A(w), P_AP_B(w)\rangle = (2\mu-1)\|P_A(w)\|^2 \\
      \langle P_BP_A(w), P_B(w)\rangle = (2\mu-1)\|P_B(w)\|^2.
    \end{cases}
  \end{equation}
  This implies that $\|P_A(w)\| = \|P_B(w)\|$ (since $\mu \neq \frac 1 2$). In fact, by \eqref{pr:proj2:w} and \eqref{pr:proj2:PAwPBw},
  \begin{align}\label{}
    \|2\mu w\|^2 & = \|P_A(w)\|^2 + \|P_B(w)\|^2 + 2 \langle P_A(w), P_B(w)\rangle
    = 4\mu \|P_B(w)\|^2,
  \end{align}
  that is, $\|P_B(w)\| = \sqrt{\mu}\|w\| \neq 0$. Furthermore, since $\mu < 1$, we get
  \begin{equation}\label{}
    \|P_{B^\perp}(w)\|^2 = \|w\|^2 - \|P_B(w)\|^2 =(1 - \mu) \|w\|^2 \neq 0.
  \end{equation}
  It now suffices to apply Lemma \ref{lem:uniqueness} to $P_BP_AP_B$ and \eqref{pr:proj2:SDw}.
\end{proof}

We now describe how to project onto the nullspace and eigenspaces of $\frac{P_A+P_B}2$.

\begin{lemma}\label{lem:proj3}
  Let $\lambda \in (0,1)$ and assume that $u \in \mathcal N(\lambda - P_AP_BP_A)$ is nonzero. Then,
  \begin{equation}\label{lem:proj3:PN}
    P_{\mathcal N\left(\frac{P_A+P_B}2\right)}(u) = 0
  \end{equation}
  and, for any $\mu \in (0,1]$, we have
  \begin{equation}\label{lem:proj3:PE}
    P_{\mathcal N\left(\mu - \frac{P_A+P_B}2\right)}(u) =
    \begin{cases}
      \frac 1 2 u \pm \frac{1}{2\sqrt \lambda} P_B(u) \neq 0, & \text{if } \mu = \frac 1 2 \pm \frac 1 2 \sqrt \lambda \\
      0, & \text{otherwise.}
    \end{cases}
  \end{equation}
\end{lemma}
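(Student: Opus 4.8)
The plan is to mimic the proofs of Lemmas~\ref{lem:proj1} and \ref{lem:proj2}: I will write down explicitly the spectral representation of $u$ for the operator $T:=\frac{P_A+P_B}{2}$ and then appeal to Lemma~\ref{lem:uniqueness}. Two elementary observations do all the work. First, since $\lambda\neq0$, applying $P_A$ to $P_AP_BP_A(u)=\lambda u$ gives $P_A(u)=u$, whence $P_AP_B(u)=P_AP_BP_A(u)=\lambda u$, while trivially $P_B(P_B(u))=P_B(u)$. Second, exactly as in the proof of Lemma~\ref{lem:proj1}, $\|P_B(u)\|^2=\langle P_AP_BP_A(u),u\rangle=\lambda\|u\|^2$, and since $P_B$ is an orthogonal projection, $\langle u,P_B(u)\rangle=\|P_B(u)\|^2=\lambda\|u\|^2$.

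Next I would introduce the candidate vectors $w^{\pm}:=\frac12 u\pm\frac{1}{2\sqrt\lambda}P_B(u)$ and the scalars $\mu^{\pm}:=\frac12\pm\frac12\sqrt\lambda$, and check directly from the two observations above that $P_A(w^{\pm})=\mu^{\pm}u$ and $P_B(w^{\pm})=\frac{\sqrt\lambda\pm1}{2\sqrt\lambda}P_B(u)$, so that $P_A(w^{\pm})+P_B(w^{\pm})=2\mu^{\pm}w^{\pm}$, i.e.\ $w^{\pm}\in\mathcal N(\mu^{\pm}-T)$. A one-line norm computation using the two identities for $\langle u,P_B(u)\rangle$ and $\|P_B(u)\|^2$ gives $\|w^{\pm}\|^2=\mu^{\pm}\|u\|^2$; since $\lambda\in(0,1)$ we have $0<\mu^-<\frac12<\mu^+<1$, hence $\mu^{\pm}\neq0$, $\mu^+\neq\mu^-$, and $w^{\pm}\neq0$. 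Finally $w^++w^-=u$, so $u=0+w^++w^-$ is the spectral representation of $u$ for $T$ (with vanishing $\mathcal N(T)$-component), and Lemma~\ref{lem:uniqueness} applied to $T$ yields simultaneously $P_{\mathcal N(T)}(u)=0$, $P_{\mathcal N(\mu^{\pm}-T)}(u)=w^{\pm}$, and $P_{\mathcal N(\mu-T)}(u)=0$ for every other $\mu$. Because $\mu^{\pm}\in(0,1)$, this also disposes of the endpoint $\mu=1$ (it falls into the ``otherwise'' branch), which is all that is needed for \eqref{lem:proj3:PN} and \eqref{lem:proj3:PE}.

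I do not expect any genuine obstacle: the only creative step is guessing the eigenvectors $w^{\pm}$ and the eigenvalues $\mu^{\pm}=\frac12\pm\frac12\sqrt\lambda$, and the form of the latter is already dictated by the classical relation $\mu_n^{\pm}=\frac12\pm\frac12\cos\theta_n$ together with $\lambda_n=\cos^2\theta_n$. The two places that require a little care are the verification that the $P_B(u)$-coefficients match in $P_A(w^{\pm})+P_B(w^{\pm})=2\mu^{\pm}w^{\pm}$ (which reduces to the identity $\frac{\sqrt\lambda\pm1}{2}=\pm\frac{1\pm\sqrt\lambda}{2}$) and the verification that $w^{\pm}\neq0$, since this is precisely what lets Lemma~\ref{lem:uniqueness} genuinely identify the projections rather than possibly leave them zero.
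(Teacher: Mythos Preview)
Your proposal is correct and follows essentially the same approach as the paper: both define $w^{\pm}=\frac12 u\pm\frac{1}{2\sqrt\lambda}P_B(u)$, verify directly that $w^{\pm}\in\mathcal N(\mu^{\pm}-\frac{P_A+P_B}{2})$ using $P_A(u)=u$ and $P_AP_B(u)=\lambda u$, compute $\|w^{\pm}\|^2=\mu^{\pm}\|u\|^2\neq0$ from $\langle u,P_B(u)\rangle=\|P_B(u)\|^2=\lambda\|u\|^2$, and then invoke Lemma~\ref{lem:uniqueness} on the decomposition $u=w^++w^-$. The only cosmetic difference is that you compute $P_A(w^{\pm})$ and $P_B(w^{\pm})$ separately before adding, whereas the paper expands $\frac{P_A+P_B}{2}(w_-)$ in one line.
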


\begin{proof}
  We show that the spectral  representation of $u$ for  $\frac{P_A+P_B}2$ takes the form
  \begin{equation}\label{pr:proj3:SDv}
     u\quad = \quad \underbrace{\textstyle \frac 1 2 u - \frac {1}{2 \sqrt \lambda} P_B(u)}_{\substack{w_- \in \mathcal N \big(\mu_- - \frac{P_A+P_B}2  \big)\\ \neq 0 }}
    \quad + \quad
    \underbrace{\textstyle \frac 1 2 u + \frac {1}{2 \sqrt \lambda} P_B(u)}_{\substack{w_+ \in \mathcal N \big(\mu_+ - \frac{P_A+P_B}2 \big) \\ \neq 0}},
  \end{equation}
  where $\mu_- := \frac 1 2 - \frac 1 2 \sqrt \lambda$ and $\mu_+ := \frac 1 2 + \frac 1 2 \sqrt \lambda$. Indeed, knowing that $u = P_A(u)$, we obtain $P_AP_B(u) = \lambda u$. A direct calculation shows that
  \begin{align} \nonumber \textstyle
    \frac{P_A+P_B}2(w_-)
    & = \textstyle \frac 1 2 \left( \frac{P_A(u)}{2} - \frac{P_AP_B(u)}{2 \sqrt \lambda}
    + \frac{P_B(u)}{2} - \frac{P_B(u)}{2 \sqrt \lambda} \right) \\ \nonumber
      & =  \textstyle \frac 1 2 \left[\left( \frac 1 2 - \frac{\sqrt\lambda}{2}\right) u
      + \frac{1}{\sqrt{\lambda}}\left(\frac {\sqrt{\lambda}} 2  - \frac 1 2\right)P_B(u) \right]\\
      & =  \textstyle \frac 1 2 \left(  \mu_- \cdot u  - \frac{\mu_-}{\sqrt \lambda} \cdot P_B(u)\right)
      = \mu_- \cdot w_-.
  \end{align}
  Moreover, using $\langle u, P_B(u) \rangle = \|P_B(u)\|^2 = \lambda\|u\|^2$ (see \eqref{pr:proj1:norm}), we have
  \begin{equation}\label{} \textstyle
    \|w_-\|^2 = \frac 1 4 \|u\|^2 + \frac{1}{4\lambda}\|P_B(u)\|^2 - \frac 1 {2 \sqrt{\lambda}} \langle u, P_B(u)\rangle
    = \mu_- \|u\|^2 \neq 0.
  \end{equation}
  Thus we have shown that $w_- \in \mathcal N(\mu_- - \frac{P_A+P_B}2)$ and $w_- \neq 0$, as asserted. By using a similar argument, we can show that $w_+ \in \mathcal N(\mu_+ - \frac{P_A+P_B}2)$ and $w_+ \neq 0$. Thus, by invoking Lemma \ref{lem:uniqueness}, we arrive at \eqref{lem:proj3:PN} and \eqref{lem:proj3:PE}.
\end{proof}

\begin{remark}\label{rem:proj3}
  Alternatively to \eqref{lem:proj3:PE}, we can write
   \begin{equation}\label{rem:proj3:PE}
    P_{\mathcal N\left(\mu - \frac{P_A+P_B}2\right)}(u) =
    \begin{cases}
      \frac 1 2 u + \frac{1}{2(2\mu - 1)} P_B(u) \neq 0, & \text{if } (2\mu-1)^2 = \lambda \\
      0, & \text{otherwise.}
    \end{cases}
  \end{equation}
\end{remark}

In the next lemma we show that in some cases, the nontrivial projections presented in Lemmata \ref{lem:proj1}--\ref{lem:proj3} preserve the orthogonality of eigenvectors.

\begin{lemma} \label{lem:orthogonality}
  Let $\lambda, \mu \in (0,1)$, $\mu \neq \frac 1 2$. Moreover, let $u_1, u_2 \in \mathcal N(\lambda - P_AP_BP_A)$ and let $w_1, w_2 \in \mathcal N(\mu - \frac{P_A+P_B}2)$. Then,
  \begin{equation}\label{lem:orthogonality:1}
    \langle P_B(u_1), P_B(u_2) \rangle = \lambda \langle u_1, u_2\rangle,
  \end{equation}
  \begin{equation}\label{lem:orthogonality:2}
    \langle P_B(w_1), P_B(w_2) \rangle = \mu \langle w_1, w_2\rangle
  \end{equation}
  and
  \begin{equation}\label{lem:orthogonality:3} \textstyle
    \left\langle \frac 1 2 u_1 \pm \frac {1}{2 \sqrt \lambda} P_B(u_1), \frac 1 2 u_2 \pm\frac {1}{2 \sqrt \lambda} P_B(u_2)\right\rangle
    = (\frac 1 2 \pm \frac 1 2 \sqrt \lambda) \langle u_1, u_2\rangle.
  \end{equation}
\end{lemma}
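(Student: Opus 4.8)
The plan is to prove each of the three identities by a direct computation, using two facts established earlier: that for $u \in \mathcal N(\lambda - P_AP_BP_A)$ we have $u = P_A(u)$ and $P_AP_B(u) = \lambda u$ (so that $\langle u, P_B(u)\rangle = \|P_B(u)\|^2 = \lambda\|u\|^2$, as recorded in \eqref{pr:proj1:norm}), and the analogous relations \eqref{pr:proj2:PAPBw} for eigenvectors of $\frac{P_A+P_B}{2}$. The key observation making \eqref{lem:orthogonality:1} and \eqref{lem:orthogonality:2} work is that the ``norm-type'' identities $\|P_B(u)\|^2 = \lambda\|u\|^2$ and $\|P_B(w)\|^2 = \mu\|w\|^2$, which were proved for a single vector, are really polarization identities and hence extend to inner products of two distinct vectors lying in the same eigenspace.

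For \eqref{lem:orthogonality:1}, I would write $\langle P_B(u_1), P_B(u_2)\rangle = \langle P_B(u_1), u_2\rangle$ (since $u_2 = P_A(u_2)$ and $P_B = P_B^* $, or rather moving one $P_B$ across: $\langle P_B u_1, P_B u_2\rangle = \langle u_1, P_B u_2\rangle$, then use $P_B(u_2) = P_B P_A(u_2)$ and that $P_A$ is self-adjoint to get $\langle P_A u_1, P_B u_2\rangle = \langle u_1, P_A P_B u_2\rangle = \lambda \langle u_1, u_2\rangle$). This is cleanest done by pushing the projections around using self-adjointness and $u_i = P_A(u_i)$ together with $P_AP_B(u_i) = \lambda u_i$; no polarization is even needed. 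For \eqref{lem:orthogonality:2} the same manoeuvre applies: $\langle P_B(w_1), P_B(w_2)\rangle = \langle w_1, P_B(w_2)\rangle$; now $P_B(w_2) = 2\mu w_2 - P_A(w_2)$ by \eqref{pr:proj2:w}, so this equals $2\mu\langle w_1, w_2\rangle - \langle w_1, P_A(w_2)\rangle = 2\mu\langle w_1,w_2\rangle - \langle P_A(w_1), P_A(w_2)\rangle$; by symmetry $\langle P_A(w_1),P_A(w_2)\rangle = \langle P_B(w_1),P_B(w_2)\rangle$ (from $\langle P_A w_1, P_B w_2\rangle = (2\mu-1)\langle P_A w_1, P_A w_2\rangle$ and the mirror identity, exactly as in \eqref{pr:proj2:PAwPBw}), and solving the resulting linear relation gives $\langle P_B(w_1), P_B(w_2)\rangle = \mu\langle w_1, w_2\rangle$.

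Finally, \eqref{lem:orthogonality:3} is just an expansion: the left-hand side equals $\tfrac14\langle u_1,u_2\rangle + \tfrac{1}{4\lambda}\langle P_B(u_1),P_B(u_2)\rangle \pm \tfrac{1}{4\sqrt\lambda}\big(\langle u_1, P_B(u_2)\rangle + \langle P_B(u_1), u_2\rangle\big)$. Using \eqref{lem:orthogonality:1} the second term is $\tfrac14\langle u_1,u_2\rangle$, and using $\langle u_1, P_B(u_2)\rangle = \langle P_A(u_1), P_B(u_2)\rangle = \langle u_1, P_AP_B(u_2)\rangle = \lambda\langle u_1, u_2\rangle$ (and symmetrically for the other cross term) the last term is $\pm\tfrac{1}{4\sqrt\lambda}\cdot 2\lambda\langle u_1,u_2\rangle = \pm\tfrac{\sqrt\lambda}{2}\langle u_1,u_2\rangle$, so the total is $(\tfrac12 \pm \tfrac12\sqrt\lambda)\langle u_1,u_2\rangle$, as claimed. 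I expect no real obstacle here; the only point requiring a little care is keeping track of which vectors satisfy $u_i = P_A(u_i)$ versus the eigen-equation for the average, and making sure the self-adjointness of $P_A$ and $P_B$ is invoked so that the cross terms collapse correctly — in particular that $\langle P_A(w_1),P_A(w_2)\rangle = \langle P_B(w_1),P_B(w_2)\rangle$, which is the step that actually uses $\mu \neq \tfrac12$.
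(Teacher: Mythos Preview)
Your argument is correct. For \eqref{lem:orthogonality:1} and \eqref{lem:orthogonality:3} it is essentially the paper's computation: both you and the paper push the projections across using self-adjointness, idempotence, and $u_i = P_A(u_i)$, then collapse the cross terms via \eqref{lem:orthogonality:1}.

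Where you genuinely diverge is in \eqref{lem:orthogonality:2}. The paper's route is less direct: it first invokes Lemma~\ref{lem:proj2} to place $P_B(w_2)$ in the eigenspace $\mathcal N\big((2\mu-1)^2 - P_BP_AP_B\big)$, and then applies the projection formula of Lemma~\ref{lem:proj3} (with the roles of $A$ and $B$ switched, cf.\ Remark~\ref{rem:proj3}) to compute $P_{\mathcal N(\mu - \frac{P_A+P_B}{2})}(P_B(w_2)) = \tfrac12 P_B(w_2) + \tfrac{1}{2(2\mu-1)} P_A P_B(w_2)$, which after one more use of \eqref{pr:proj2:PAPBw} reduces to $\mu w_2$. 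Your approach sidesteps this machinery entirely: you write $P_B(w_2) = 2\mu w_2 - P_A(w_2)$ directly from the eigen-equation, and then use the two-vector version of \eqref{pr:proj2:PAwPBw} together with $\mu \neq \tfrac12$ to obtain the symmetry $\langle P_A w_1, P_A w_2\rangle = \langle P_B w_1, P_B w_2\rangle$, which closes the linear relation. This is more elementary and self-contained --- it does not depend on Lemmata~\ref{lem:proj2}--\ref{lem:proj3} --- and it makes the role of the hypothesis $\mu\neq\tfrac12$ completely transparent. The paper's argument, on the other hand, reuses the eigenspace projection formulas it has already built, so it fits more tightly into the surrounding narrative.
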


\begin{proof}
  Knowing that $u_1, u_2 \in A$,  we have
  \begin{equation}\label{pr:orthogonality:1}
    \langle P_B(u_1), P_B(u_2)\rangle
    = \langle P_BP_A(u_1), P_BP_A(u_2)\rangle
    = \langle P_AP_BP_A(u_1), u_2\rangle
    = \lambda \langle u_1, u_2\rangle.
  \end{equation}
  By Lemma \ref{lem:proj2}, we see that $P_B(w_2) \in \mathcal N((2\mu-1)^2 - P_BP_AP_B)$. Moreover, thanks to \eqref{pr:proj2:PAPBw}, we have $P_AP_B(w_2) = (2\mu-1)P_A(w_2)$. Thus, by  switching the roles of $A$ and $B$ in  Lemma \ref{lem:proj3} (see also Remark \ref{rem:proj3}), we obtain
  \begin{align}\label{} \nonumber
     \langle P_B(w_1), P_B(w_2) \rangle
    & = \textstyle \Big \langle  w_1, P_{\mathcal N \big(\mu - \frac{P_A+P_B}{2}\big)}(P_B(w_2)) \Big\rangle \\ \nonumber
    & = \textstyle \left\langle w_1, \frac 1 2 P_B(w_2) + \frac{1}{2(2\mu-1)} P_A(P_B(w_2)) \right\rangle\\ \nonumber
    & = \textstyle \left\langle w_1, \frac 1 2 P_B(w_2) + \frac 1 2 P_A(w_2) \right\rangle \\
    & = \mu \langle w_1, w_2 \rangle.
  \end{align}
  Finally, using the identity $\langle u_1,  P_B(u_2)  \rangle = \langle P_B(u_1),  P_B(u_2)  \rangle$ and \eqref{pr:orthogonality:1}, we arrive at
  \begin{align}\label{}\nonumber \textstyle
    \left\langle \frac 1 2 u_1 + \frac {1}{2 \sqrt \lambda} P_B(u_1), \frac 1 2 u_2 +\frac {1}{2 \sqrt \lambda} P_B(u_2)\right\rangle
    & = \textstyle \frac 1 4 \langle u_1, u_2\rangle + \left(\frac{1}{4\lambda} + \frac{1}{2\sqrt \lambda}\right) \langle P_B(u_1), P_B(u_2)\rangle \\ \nonumber
    & = \textstyle \frac 1 4 \langle u_1, u_2\rangle + \left(\frac{1}{4} + \frac{\sqrt \lambda}{2}\right) \langle u_1, u_2\rangle \\
    & = \textstyle \left(\frac{1}{2} + \frac{\sqrt \lambda}{2}\right) \langle u_1,u_1\rangle.
  \end{align}
  A similar argument can be used for showing \eqref{lem:orthogonality:3} with a minus ``$-$'' sign.
\end{proof}

\begin{remark}
  In Lemmata \ref{lem:proj1}--\ref{lem:proj3} we omitted the eigenvectors $u$ and $w$ corresponding to the eigenvalues $\lambda = \mu = 1$. Note, however, that for each $x \in A \cap B$ and for any $\sigma \in [0,1]$, we get
  \begin{equation}\label{}
    P_{\mathcal N(\sigma - P_BP_AP_B)}(x)
    = P_{\mathcal N\big(\sigma - \frac{P_A+P_B}{2}\big)}(x)
    = \begin{cases}
        x, & \text{if } \sigma = 1\\
        0, & \text{otherwise.}
      \end{cases}
  \end{equation}
\end{remark}

\begin{remark}\label{rem:cosANDlambda}
Observe that any of the conditions:
\begin{enumerate}[(i)]
  \item There is an eigenvalue $\lambda \in (0,1)$ of $P_AP_BP_A$ (see Lemmata \ref{lem:proj1} and \ref{lem:proj3});
  \item There is an eigenvalue $\mu \in (0,1)\setminus \{\frac 1 2\}$ of $\frac{P_A+P_B}{2}$ (see Lemma \ref{lem:proj2});
\end{enumerate}
implies the inequality $\cos(\theta_F) > 0$. In fact, as will be demonstrated in Theorem \ref{thm:Lambda} below, the inequality $\cos(\theta_F) > 0$ also implies (i) and (ii).
\end{remark}

\begin{proof}
Let $u \in \mathcal N(\lambda - P_AP_BP_A)$ be nonzero where $\lambda \in (0,1)$. Using the identities $P_AP_BP_A(u) = P_AP_B(u)$ and $P_{A \cap B}(u) = 0$, and by \eqref{int:KW88:eq}, we get
\begin{equation}\label{pr:cosANDlambda}
  0 < \lambda \|u\| = \|P_AP_BP_A(u)\| = \|(P_AP_B - P_{A \cap B})(u)\| \leq \cos(\theta_F) \|u\|.
\end{equation}
This shows that $\cos(\theta_F) > 0$ when (i) holds. Similarly, let $w \in \mathcal N(\mu - \frac{P_A+P_B}{2})$ be nonzero for some $\mu \in (0,1)\setminus \{\frac 1 2\}$. Then, by switching the roles of $A$ and $B$ in \eqref{lem:proj2:PE}, we conclude that $u := P_A(w) \in \mathcal N(\lambda - P_AP_BP_A)$ is nonzero, where $\lambda = (2\mu - 1) \in (0,1)$. From this point we can again use \eqref{pr:cosANDlambda} to see that $\cos(\theta_F) > 0$, when (ii) holds.
\end{proof}

\subsection{Eigenvalues} \label{sec:eigenvalues}
Below we present a connection between $\Lambda(P_BP_AP_B)$ and $\Lambda(\frac{P_A+P_B}2)$.

\begin{theorem} \label{thm:Lambda}
  Assume that $\cos(\theta_F) > 0$. Then,
  \begin{equation}\label{thm:Lambda:TandS} \textstyle
    \Lambda\left(\frac{P_A+P_B}2\right)\setminus \left\{0, \frac 1 2, 1 \right\}
    = \left\{\frac 1 2 \pm \frac 1 2 \sqrt \lambda \colon \lambda \in \Lambda(P_BP_AP_B)\setminus \{0,1\} \right\} \neq \emptyset
  \end{equation}
  with the maximum $\frac 1 2 + \frac 1 2 \cos(\theta_F) < 1$ and with the minimum $\frac 1 2 - \frac 1 2 \cos(\theta_F) > 0$.
\end{theorem}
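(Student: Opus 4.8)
The plan is to establish the set equality in \eqref{thm:Lambda:TandS} by proving two inclusions, using the projection formulas from Lemmata~\ref{lem:proj1}--\ref{lem:proj3}, and then to identify the extreme values by relating them to the Dixmier and Friedrichs angles via Lemma~\ref{lem:principal0} and the characterizations of the nullspaces and the $\frac12$-eigenspace.

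First I would observe that, by symmetry of the roles of $A$ and $B$, $\Lambda(P_BP_AP_B)\setminus\{0,1\} = \Lambda(P_AP_BP_A)\setminus\{0,1\}$; this is convenient because Lemma~\ref{lem:proj3} is phrased in terms of eigenvectors of $P_AP_BP_A$. For the inclusion ``$\supset$'', take $\lambda\in\Lambda(P_BP_AP_B)\setminus\{0,1\} = \Lambda(P_AP_BP_A)\setminus\{0,1\}\subset(0,1)$ and a nonzero $u\in\mathcal N(\lambda-P_AP_BP_A)$. Lemma~\ref{lem:proj3}, equation \eqref{lem:proj3:PE}, exhibits nonzero vectors $w_\pm = \frac12 u \pm \frac{1}{2\sqrt\lambda}P_B(u)$ lying in $\mathcal N\bigl(\mu_\pm - \frac{P_A+P_B}{2}\bigr)$ with $\mu_\pm = \frac12 \pm \frac12\sqrt\lambda$; since $\lambda\in(0,1)$ we have $\mu_\pm\in(0,1)\setminus\{\frac12\}$, hence $\mu_\pm\in\Lambda\bigl(\frac{P_A+P_B}{2}\bigr)\setminus\{0,\frac12,1\}$. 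For the reverse inclusion ``$\subset$'', take $\mu\in\Lambda\bigl(\frac{P_A+P_B}{2}\bigr)\setminus\{0,\frac12,1\}$ and a nonzero eigenvector $w$. Applying \eqref{lem:proj2:PE} (with the roles of $A$ and $B$ switched, exactly as in the proof of Remark~\ref{rem:cosANDlambda}) gives that $u:=P_A(w)$ is a nonzero vector in $\mathcal N(\lambda - P_AP_BP_A)$ with $\lambda=(2\mu-1)^2\in(0,1)$; thus $\lambda\in\Lambda(P_AP_BP_A)\setminus\{0,1\} = \Lambda(P_BP_AP_B)\setminus\{0,1\}$ and $\mu\in\{\frac12\pm\frac12\sqrt\lambda\}$, which puts $\mu$ in the right-hand side. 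Nonemptiness of both sides follows from Remark~\ref{rem:cosANDlambda}: the hypothesis $\cos(\theta_F)>0$ forces the existence of an eigenvalue $\lambda\in(0,1)$ of $P_AP_BP_A$ (this is the converse direction asserted there, which I would need to confirm is available — if not, it follows directly since $\cos(\theta_F)>0$ means there are unit vectors realizing a positive Friedrichs cosine, and Lemma~\ref{lem:principal0} applied to $A\cap(A\cap B)^\perp$, $B\cap(A\cap B)^\perp$ produces such an eigenvector).

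Next, for the extreme values: the map $\lambda\mapsto\frac12+\frac12\sqrt\lambda$ is increasing, so the maximum of the right-hand set over $\lambda\in\Lambda(P_BP_AP_B)\setminus\{0,1\}$ is attained at $\lambda^* := \max\bigl(\Lambda(P_BP_AP_B)\setminus\{0,1\}\bigr)$. I would argue that this $\lambda^*$ equals $\cos^2(\theta_F)$: by Lemma~\ref{lem:principal0}, $\max\Lambda(P_BP_AP_B) = \cos^2(\theta_D)$, but the eigenvalue $1$ corresponds precisely to $A\cap B$ (as noted in the proof of Corollary~\ref{cor:NullSets}), so the largest eigenvalue strictly below $1$ is the largest principal-angle cosine-squared below $1$, namely $\cos^2(\theta_F)$ since $\theta_F=\theta_f$ is the first nonzero principal angle. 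Here I should be careful about the edge case $\Lambda(P_BP_AP_B)\setminus\{0,1\}=\emptyset$, but nonemptiness was just established. Hence the maximum of the left-hand set is $\frac12+\frac12\cos(\theta_F)$, and since $\cos(\theta_F)<1$ by Remark~\ref{rem:parallel}, this is $<1$. Symmetrically, the minimum of $\frac12-\frac12\sqrt\lambda$ over the same $\lambda$-set is attained at $\lambda^*$ again, giving $\frac12-\frac12\cos(\theta_F)>0$.

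The main obstacle I anticipate is the bookkeeping around which eigenvalues to exclude and matching them to angles: precisely, showing $\max\bigl(\Lambda(P_BP_AP_B)\setminus\{1\}\bigr)=\cos^2(\theta_F)$ requires knowing that $1\in\Lambda(P_BP_AP_B)$ iff $A\cap B\neq\{0\}$ (handled by $\mathcal N(1-P_BP_AP_B)=A\cap B$ from the proof of Corollary~\ref{cor:NullSets}) and that no eigenvalue lies strictly between $\cos^2(\theta_F)$ and $1$ — which is exactly the statement that $\theta_F$ is the \emph{first} nonzero principal angle (Lemma~\ref{lem:principal} together with the remark identifying $\theta_F=\theta_f$). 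A secondary subtlety is confirming that the displayed set on the right of \eqref{thm:Lambda:TandS} is genuinely symmetric under switching $A\leftrightarrow B$ so that the formulas of Lemmata~\ref{lem:proj2} and \ref{lem:proj3}, stated for one ordering, can be freely applied; this is immediate since $P_AP_BP_A$ and $P_BP_AP_B$ have the same nonzero spectrum (e.g.\ via Lemma~\ref{lem:principal}, or directly: $P_A(v)$ intertwines the two products). With these points in place the argument is a routine assembly of the already-proven projection identities.
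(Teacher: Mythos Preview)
Your argument for the set equality \eqref{thm:Lambda:TandS} is correct and is essentially the paper's own proof: both inclusions are obtained by feeding an eigenvector of one operator through Lemma~\ref{lem:proj3} (for ``$\supset$'') and Lemma~\ref{lem:proj2} (for ``$\subset$''), with appropriate $A\leftrightarrow B$ swaps. The one substantive difference is in how you establish nonemptiness and the identification $\max\bigl(\Lambda(P_BP_AP_B)\setminus\{0,1\}\bigr)=\cos^2(\theta_F)$. You route this through the principal-angle machinery (Lemma~\ref{lem:principal0}, Lemma~\ref{lem:principal}, and the remark that $\theta_F=\theta_f$), which works but is somewhat indirect. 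The paper instead does it in one operator-norm chain: using \eqref{int:KW88:eq}, Fact~\ref{fact:commutingProj}, and Lemma~\ref{lem:spectralShift} with $\lambda=1$,
\[
\cos^2(\theta_F)=\|P_AP_B-P_{A\cap B}\|^2=\|P_BP_AP_B-P_{A\cap B}\|=\max\Lambda(P_BP_AP_B-P_{A\cap B})=\max\bigl(\Lambda(P_BP_AP_B)\setminus\{0,1\}\bigr),
\]
which simultaneously yields nonemptiness (since $\cos(\theta_F)>0$) and the value of the maximum without invoking principal angles at all. This also cleanly sidesteps the circularity you flagged with Remark~\ref{rem:cosANDlambda}.
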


\begin{proof}
Using \eqref{int:KW88:eq}, Fact \ref{fact:commutingProj}, \eqref{lem:spectralShift:Lambda} and the identity $A \cap B = \mathcal N(1 - P_BP_AP_B)$, we find that
  \begin{align} \label{pr:Lambda:cos}\nonumber
    \cos(\theta_F)^2 &
    = \|(P_AP_B - P_{A\cap B})^*\| \cdot \|P_AP_B - P_{A\cap B}\| \\ \nonumber
    & = \|(P_AP_B - P_{A\cap B})^* \cdot ( P_AP_B  - P_{A\cap B})\| \\ \nonumber
    & = \|P_BP_AP_B - P_{A\cap B}\| \\ \nonumber
    & = \max\Lambda(P_BP_AP_B - P_{A\cap B}) \\
    & = \max\big\{\Lambda(P_BP_AP_B)\setminus\{0,1\}\big\} \in (0,1).
  \end{align}
This shows that $\Lambda(P_BP_AP_B)\setminus \{0,1\} \neq \emptyset$ with the maximum $\cos^2(\theta_F) < 1$;  see also Remark \ref{rem:parallel}.

Denote the set on the right-hand side of \eqref{thm:Lambda:TandS} by $\Omega$ and let $\mu \in \Omega$, say $\mu = \frac 1 2 + \frac 1 2 \sqrt{\lambda}$ for some $\lambda \in \Lambda (P_BP_AP_B) \setminus \{0,1\}$ (a similar argument holds for $\mu = \frac 1 2 - \frac 1 2 \sqrt{\lambda}$). Then, there is a nonzero eigenvector $v \in \mathcal N(\lambda - P_BP_AP_B)$ and, thanks to Lemma \ref{lem:proj3}, we know that $w := \frac 1 2 v + \frac{1}{2\sqrt{\lambda}}P_A(v) \in \mathcal N(\mu - \frac{P_A+P_B} 2)$ is nonzero. In particular, $\mu \in \Lambda(\frac{P_A+P_B}{2})\setminus\{0,\frac 1 2, 1\}$. Thus we have shown that $\Omega \subset \Lambda(\frac{P_A+P_B}{2})\setminus\{0,\frac 1 2, 1\}$.

Assume now that $\mu \in \Lambda(\frac{P_A+P_B}{2})\setminus\{0,\frac 1 2, 1\}$. Then there is a nonzero eigenvector $w \in \mathcal N(\mu - \frac{P_A+P_B}{2})$ and, thanks to Lemma \ref{lem:proj2}, we know that $v := P_B(w) \in \mathcal N(\lambda - P_BP_AP_B)$ is nonzero for $\lambda = (2\mu - 1)^2$. In particular, $\mu = \frac 1 2 \pm \frac 1 2 \sqrt{\lambda} \in \Omega$. Consequently, $\Lambda(\frac{P_A+P_B}{2})\setminus\{0,\frac 1 2, 1\} \subset \Omega$.
\end{proof}

\begin{corollary}\label{cor:Lambda}
  Assume that $\cos(\theta_F) > 0$. Then,
  \begin{equation}\label{cor:Lambda:SandT}
    \Lambda(P_BP_AP_B) \setminus \{0, 1\}
    = \textstyle \{(2\mu - 1)^2 \colon \mu \in \Lambda(\frac{P_A+P_B}{2}) \setminus \{0, \frac 1 2 ,1\}\} \neq \emptyset
  \end{equation}
  with the maximum $\cos^2(\theta_F) \in (0,1)$.
\end{corollary}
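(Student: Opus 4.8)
The plan is to read off Corollary \ref{cor:Lambda} as an immediate consequence of Theorem \ref{thm:Lambda} by simply inverting the bijection $\lambda \mapsto \tfrac12 \pm \tfrac12\sqrt{\lambda}$. Theorem \ref{thm:Lambda} already establishes the set equality
\[
  \Lambda\!\left(\tfrac{P_A+P_B}{2}\right)\setminus\{0,\tfrac12,1\}
  = \Big\{\tfrac12 \pm \tfrac12\sqrt{\lambda} : \lambda\in\Lambda(P_BP_AP_B)\setminus\{0,1\}\Big\},
\]
so the first step is to apply the map $\mu\mapsto(2\mu-1)^2$ to both sides. On the right-hand side this map sends $\tfrac12 \pm \tfrac12\sqrt{\lambda}$ to $\big(\pm\sqrt{\lambda}\big)^2 = \lambda$, so the image of the right-hand set is precisely $\Lambda(P_BP_AP_B)\setminus\{0,1\}$ (using that for each $\lambda$ in this set at least one of the two signs occurs, which it trivially does). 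On the left-hand side, applying $\mu\mapsto(2\mu-1)^2$ to the set $\Lambda(\tfrac{P_A+P_B}{2})\setminus\{0,\tfrac12,1\}$ yields exactly the set on the right-hand side of \eqref{cor:Lambda:SandT}. Equating the two images gives the claimed identity.

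Next I would record that the set is nonempty and has the stated maximum. Nonemptiness is inherited directly: Theorem \ref{thm:Lambda} asserts $\Lambda(\tfrac{P_A+P_B}{2})\setminus\{0,\tfrac12,1\}\neq\emptyset$, and applying $\mu\mapsto(2\mu-1)^2$ to a nonempty set yields a nonempty set; alternatively, the computation of $\cos^2(\theta_F)$ in \eqref{pr:Lambda:cos} already shows $\Lambda(P_BP_AP_B)\setminus\{0,1\}\neq\emptyset$ with maximum $\cos^2(\theta_F)\in(0,1)$, which is exactly the maximum claimed in the corollary. So the maximum statement is literally line \eqref{pr:Lambda:cos} of the proof of Theorem \ref{thm:Lambda}, and one can simply cite it.

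The only point requiring a word of care is that $\mu\mapsto(2\mu-1)^2$ is not injective on all of $(0,1)$ — it identifies $\tfrac12-\tfrac12 t$ with $\tfrac12+\tfrac12 t$ — but this causes no problem here because we are comparing images of sets, not claiming a bijection; the two-to-one collapsing on the left is matched by the two-to-one structure ($\pm$ signs) on the right. Thus there is essentially no obstacle: the corollary is a reformulation of Theorem \ref{thm:Lambda} obtained by composing with the inverse branch of $\lambda\mapsto(\tfrac12+\tfrac12\sqrt{\lambda})$ and its mirror image, together with the already-proved maximum computation. The proof is therefore a short paragraph invoking Theorem \ref{thm:Lambda} and \eqref{pr:Lambda:cos}.
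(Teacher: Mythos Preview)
Your proposal is correct and matches the paper's treatment: the paper presents Corollary~\ref{cor:Lambda} without a separate proof, as an immediate consequence of Theorem~\ref{thm:Lambda}, and your argument (applying $\mu\mapsto(2\mu-1)^2$ to both sides of \eqref{thm:Lambda:TandS} and citing \eqref{pr:Lambda:cos} for the maximum) is exactly the intended justification.
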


It is not difficult to see that we can replace the product $P_BP_AP_B$ by $P_AP_BP_A$ in Theorem \ref{thm:Lambda} and Corollary \ref{cor:Lambda}. In particular,
\begin{equation}\label{rem:multiplicity:SandS'}
  \Lambda(P_BP_AP_B) \setminus \{0, 1\} = \Lambda(P_AP_BP_A) \setminus \{0, 1\} \neq \emptyset.
\end{equation}

\begin{remark}[Including Geometric Multiplicity and Principal Angles] \label{rem:multiplicity}
Thanks to Lemma \ref{lem:orthogonality}, for all $\lambda \in (0,1)$ and $\mu_\pm := \frac 1 2 \pm \frac 1 2 \sqrt{\lambda}$, we have
\begin{equation}\label{}\textstyle
  \dim \mathcal N(\lambda - P_BP_AP_B) = \dim \mathcal N(\lambda - P_AP_BP_A) = \dim \mathcal N(\mu_\pm - \frac{P_A+P_B}{2}).
\end{equation}
This implies that we can rewrite \eqref{thm:Lambda:TandS} and \eqref{rem:multiplicity:SandS'} using ``$\Lambda_{\#}(\cdot)$'' instead of ``$\Lambda(\cdot)$'' so that the eigenvalues are repeated according to their geometric multiplicity. We recall from Section \ref{sec:angles}, that $\Lambda_{\#}(P_BP_AP_B) \setminus \{0,1\} = \{\cos^2(\theta_n) \colon n = f, \ldots,r\}$, where $\theta_n$ are the principal angles, $f = \dim (A \cap B)+1$ and $r = \rank (P_BP_AP_B)$. Thus, $\Lambda_{\#}(\frac{P_A+P_B} 2) \setminus \{0,\frac 1 2, 1\} = \{\frac 1 2 \pm \frac 1 2 \cos(\theta_n) \colon n = f, \ldots, r\}$. More results of this type concerning the spectra of $P_A+P_B$ and $P_BP_AP_B$ can be found, for example, in \cite[Corollary 4.9]{BjorstadMandel1991}, \cite[Theorem 23]{Galantai2008}, \cite[Example 2.1]{BottcherSpitkovsky2010} and in \cite[Theorem 2.18]{KnyazevJujunashviliArgentati2010}.  Although these results were established in different settings, when formulated in our notation, they can be written as follows: $\Lambda_{\#}(P_A+P_B) \setminus \{0, 1, 2\} = \{1 \pm  \cos(\theta_n) \colon n = f, \ldots, r\}.$

\end{remark}

\smallskip
In the next theorem we show that \eqref{thm:Lambda:TandS} also holds when we use ``$\Lambda(x, \cdot )$'' instead of ``$\Lambda(\cdot)$''; see the notation of \eqref{def:sigmax}. We emphasize that such a result is not true for all $x \in \mathbb R^d$, as we show in Example \ref{ex:Lambdax} below. Before proceeding, we note that:

\begin{remark}\label{rem:assumption}
  If $\|P_AP_B(x) - P_{A \cap B}(x)\| > 0$ for some $x \in \mathbb R^d$, then $\cos(\theta_F) > 0$. On the other hand, if $\cos(\theta_F) > 0$, then there is $x \in A \cup B$ such that $\|P_AP_B(x) - P_{A \cap B}(x)\| > 0$. Moreover, if $x \in A \cup B$, then the following conditions are all equivalent:
  \begin{multicols}{2}
  \begin{enumerate}[(i)]
    \item $\|P_AP_B(x) - P_{A \cap B}(x)\| > 0$;
    \item $\|P_BP_A(x) - P_{A \cap B}(x)\| > 0$;
    \item $\|P_BP_AP_B(x) - P_{A \cap B}(x)\| > 0$;
    \item $\|P_AP_BP_A(x) - P_{A \cap B}(x)\| > 0$.
  \end{enumerate}
  \end{multicols}
\end{remark}

\begin{proof}
  The inequality $\|P_AP_B(x) - P_{A \cap B}(x)\| > 0$ implies that the corresponding operator $P_AP_B - P_{A \cap B}$ is nonzero. This, when combined with \eqref{int:KW88:eq}, leads to $\cos(\theta_F) > 0$.

  For the second statement, let $\lambda \in \Lambda(P_AP_BP_A) \setminus \{0,1\}$. The latter set is nonempty thanks to \eqref{rem:multiplicity:SandS'}. Let $u \in \mathcal N(\lambda - P_AP_BP_A) \subset A$ be nonzero and put $v := P_B(u) \in B$. Then $P_{A \cap B}(u) = P_{A \cap B}(v) = 0$, but $P_AP_B(v) = P_AP_B(u) = \lambda u \neq 0$.

  In view of Lemma \ref{lem:NullSet:S}, (i) is equivalent to (iii). Analogously, by switching the roles of $A$ and $B$, we see that (ii) is equivalent to (iv). Moreover, if $x \in A $, then $P_AP_B(x) = P_AP_BP_A(x)$, which shows that (i) is equivalent to (iv). Using an analogous argument, we can show that for $x \in B$, condition (ii) is equivalent to (iii).
\end{proof}

\begin{theorem} \label{thm:Lambdax}
  Let $x \in A \cup B$ be such that $\|P_AP_B(x) - P_{A \cap B}(x)\| > 0$. Then,
  \begin{equation} \label{thm:Lambdax:TandS} \textstyle
    \Lambda\left(x, \frac{P_A+P_B}2 \right)\setminus \left\{0, \frac 1 2, 1 \right\} = \left\{\frac 1 2 \pm \frac 1 2 \sqrt \lambda \colon \lambda \in \Lambda(x, P_BP_AP_B)\setminus \{0,1\} \right\} \neq \emptyset.
  \end{equation}
\end{theorem}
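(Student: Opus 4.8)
The strategy is to mimic the proof of Theorem \ref{thm:Lambda}, but this time tracking not merely which eigenvalues occur, but which eigenspaces the point $x$ has a nonzero projection onto. The whole argument splits into two inclusions, and by the symmetry built into Remark \ref{rem:assumption} (conditions (i)--(iv) are all equivalent when $x \in A \cup B$), it suffices to treat the case $x \in A$; the case $x \in B$ follows by interchanging $A$ and $B$, which also swaps $P_AP_BP_A$ and $P_BP_AP_B$ while leaving $\frac{P_A+P_B}{2}$ unchanged, and the set $\Lambda(P_BP_AP_B)\setminus\{0,1\} = \Lambda(P_AP_BP_A)\setminus\{0,1\}$ by \eqref{rem:multiplicity:SandS'}. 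So assume $x \in A$, hence $x = P_A(x)$ and $P_BP_AP_B(x)$ relates to $P_AP_BP_A(x) = P_AP_B(x)$ in the usual way; note that the hypothesis $\|P_AP_B(x) - P_{A\cap B}(x)\| > 0$ together with Remark \ref{rem:assumption} guarantees $\cos(\theta_F) > 0$ so that all the spectral-shift machinery applies.

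For the inclusion ``$\supset$'': take $\lambda \in \Lambda(x, P_BP_AP_B) \setminus \{0,1\}$, so that $v := P_{\mathcal N(\lambda - P_BP_AP_B)}(x) \neq 0$. I want to show $\mu_\pm := \frac 1 2 \pm \frac 1 2\sqrt\lambda$ lies in $\Lambda(x, \frac{P_A+P_B}{2})$. The natural route is to compute the spectral representation of $x$ for $\frac{P_A+P_B}{2}$ termwise: for each eigenvalue $\lambda'$ of $P_BP_AP_B$ in $(0,1)$, write $v' := P_{\mathcal N(\lambda' - P_BP_AP_B)}(x)$, and since $x \in A$ we have (by switching the roles of $A,B$ in Lemma \ref{lem:proj1}) that $v' \in \mathcal N(\lambda' - P_BP_AP_B)$ with $P_A(v') \in \mathcal N(\lambda' - P_AP_BP_A)$; then Lemma \ref{lem:proj3} (with $A,B$ switched) tells us the $\mu'_\pm$-components of $v'$. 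Summing over all $\lambda'$, plus the contributions from the nullspace, the $\frac 1 2$-eigenspace, and $A \cap B$, should reassemble $x$ and exhibit the $\mu_\pm$-component of $x$ as (a nonzero multiple of) $\frac 1 2 v \pm \frac{1}{2\sqrt\lambda} P_A(v)$. The key point making this clean is Lemma \ref{lem:orthogonality}: the map $v' \mapsto \frac 1 2 v' \pm \frac{1}{2\sqrt\lambda'} P_A(v')$ preserves orthogonality within a fixed eigenvalue, so the individual pieces that land in a given $\mu_\pm$-eigenspace do not cancel — their norms add. Hence $P_{\mathcal N(\mu_\pm - \frac{P_A+P_B}{2})}(x) \neq 0$, giving $\mu_\pm \in \Lambda(x, \frac{P_A+P_B}{2})$.

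For the inclusion ``$\subset$'': take $\mu \in \Lambda(x, \frac{P_A+P_B}{2}) \setminus \{0, \frac 1 2, 1\}$, so $w := P_{\mathcal N(\mu - \frac{P_A+P_B}{2})}(x) \neq 0$, and set $\lambda := (2\mu-1)^2 \in (0,1)$. By Lemma \ref{lem:proj2}, $P_B(w) \in \mathcal N(\lambda - P_BP_AP_B)$, and since $\mu \ne \frac12$ it is nonzero; I need to identify $P_B(w)$ as (a nonzero multiple of) $P_{\mathcal N(\lambda - P_BP_AP_B)}(x)$, which will then be nonzero, so $\lambda \in \Lambda(x, P_BP_AP_B) \setminus \{0,1\}$ and $\mu = \frac 1 2 \pm \frac 1 2\sqrt\lambda$ lies in the right-hand side. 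To do this I would take the spectral representation of $x$ for $\frac{P_A+P_B}{2}$, apply $P_B$ to it termwise (Lemma \ref{lem:proj2} governs how each $\mu'$-eigencomponent maps under $P_B$ into an eigenspace of $P_BP_AP_B$), and again invoke the orthogonality-preservation in Lemma \ref{lem:orthogonality} (applied with $\mu'$) to see that distinct contributions to a fixed $\lambda$-eigenspace cannot cancel. Then uniqueness (Lemma \ref{lem:uniqueness}) identifies $P_{\mathcal N(\lambda - P_BP_AP_B)}(x)$ with the sum of those $P_B$-images of $\mu'$-components with $(2\mu'-1)^2 = \lambda$, which includes the nonzero term $P_B(w)$. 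Finally, non-emptiness of the set in \eqref{thm:Lambdax:TandS} follows because the hypothesis forces, via Remark \ref{rem:assumption} and Lemma \ref{lem:NullSet:S}, that $P_{A\cap B}(x) \neq x$ while $x \in A$ has no component in $B^\perp \oplus (A^\perp \cap B)$ beyond what is killed, so some genuine $\lambda \in (0,1)$ must appear in $\Lambda(x, P_BP_AP_B)$ — more precisely, $\|P_AP_BP_A(x) - P_{A\cap B}(x)\| > 0$ means the spectral sum $\sum_{\lambda \in (0,1)} \lambda\, x_\lambda$ is nonzero.

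The main obstacle I anticipate is the bookkeeping in the two ``summing over eigenvalues'' steps: one must be careful that when several distinct eigenvalues $\lambda'$ of $P_BP_AP_B$ produce, via $\mu'_\pm = \frac12 \pm \frac12\sqrt{\lambda'}$, components in eigenspaces of $\frac{P_A+P_B}{2}$, these are genuinely the spectral components (no accidental collisions beyond the expected $\mu'_+ \ne \mu''_-$ type, which is automatic since $\lambda' \ne \lambda''$ forces $\frac12 \pm \frac12\sqrt{\lambda'} \ne \frac12 \pm \frac12\sqrt{\lambda''}$, and $\mu'_+ \ne \mu''_-$ always). Handling the boundary eigenvalues $0$, $\frac12$, and $1$ correctly — tracking that $P_{A\cap B}(x)$ contributes only to $\mu = 1$, that $\mathcal N(\frac 1 2 - \frac{P_A+P_B}{2}) \subset \mathcal N(P_AP_B - P_{A\cap B})$ by Lemma \ref{lem:EigenSet12:T}, and that the nullspace pieces map as described — is where a careless argument would slip. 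A cleaner alternative for the ``$\subset$'' direction may be to use the spectral-shift Lemma \ref{lem:spectralShift} to quotient out the eigenvalues $1$ (i.e. $A \cap B$) and reduce to the case $A \cap B = \{0\}$, where $P_{A\cap B} = 0$ and the formulas of Lemmata \ref{lem:proj1}--\ref{lem:proj3} apply directly without the $P_{A\cap B}$-corrections.
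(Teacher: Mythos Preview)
Your overall strategy --- compute the full spectral decomposition of $x$ for $\tfrac{P_A+P_B}{2}$ by first decomposing $x$ for a product operator and then applying Lemma~\ref{lem:proj3} termwise, so that both inclusions follow at once from Lemma~\ref{lem:uniqueness} --- is exactly the paper's. But there is a concrete slip: when $x \in A$ you start from the $P_BP_AP_B$-decomposition of $x$, whereas the paper uses $P_AP_BP_A$. This choice matters. With the paper's choice every eigencomponent $u_{\lambda_i}$ lies in $A$, and the constraint $x = P_A(x)$ forces the nullspace piece $u_0 = P_A(u_0)$ into $A \cap B^\perp \subset \mathcal N(\tfrac12 - \tfrac{P_A+P_B}{2})$, so $u_0$ sits in a single eigenspace of $\tfrac{P_A+P_B}{2}$. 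With your choice the nullspace piece is $v_0 = P_{B^\perp}(x) \in B^\perp$, and elements of $B^\perp$ do \emph{not} in general lie in one eigenspace of $\tfrac{P_A+P_B}{2}$: Example~\ref{ex:Lambdax} exhibits $P_{B^\perp}(u) \in B^\perp$ with $\Lambda(P_{B^\perp}(u), \tfrac{P_A+P_B}{2}) = \{\mu_+, \mu_-\}$. Hence your claimed identification ``the $\mu_\pm$-component of $x$ is $\tfrac12 v \pm \tfrac{1}{2\sqrt\lambda}P_A(v)$'' is false --- $v_0$ also contributes to those components, and you have no guarantee against cancellation. The same issue undermines your separate ``$\subset$'' argument: $P_B(w_{\mu_+})$ and $P_B(w_{\mu_-})$ lie in the \emph{same} $\lambda$-eigenspace of $P_BP_AP_B$, and Lemma~\ref{lem:orthogonality} (which treats only a fixed $\mu$) says nothing about their inner product.

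The fix is exactly the paper's: for $x \in A$ decompose via $P_AP_BP_A$. Then $u_0 \in A \cap B^\perp$ goes to the $\tfrac12$-eigenspace, each $u_{\lambda_i}$ splits via Lemma~\ref{lem:proj3} into nonzero $w_i^\pm$, and Lemma~\ref{lem:uniqueness} gives $\Lambda(x, \tfrac{P_A+P_B}{2}) \setminus \{0, \tfrac12, 1\} = \{\mu_i^\pm\}$ in one stroke --- no separate ``$\subset$'' argument and no orthogonality-preservation needed. One extra step then links this to $P_BP_AP_B$: write $x = P_{B^\perp}(x) + \sum_i P_B(u_{\lambda_i}) + u_1$ and use Lemma~\ref{lem:proj1} (each $P_B(u_{\lambda_i}) \in \mathcal N(\lambda_i - P_BP_AP_B)$ is nonzero) together with Lemma~\ref{lem:uniqueness} to get $\Lambda(x, P_BP_AP_B) \setminus \{0,1\} = \{\lambda_1,\ldots,\lambda_m\}$.
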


\begin{proof}
  We assume that  $x \in A$.  Using the spectral theorem (Theorem \ref{thm:SDT}), we have
  \begin{equation}\label{pr:Lambdax:x}
    x = u_0 + \sum_{\lambda \in \Lambda(P_AP_BP_A) \setminus \{0,1\}} u_{\lambda} + u_1,
  \end{equation}
  where $u_0 := P_{\mathcal N(P_AP_BP_A)}(x)$ and $u_{\lambda} := P_{\mathcal N(\lambda - P_AP_BP_A)}(x)$, $\lambda \in \Lambda(P_AP_BP_A) \setminus \{0\}$. Note that $u_1 = P_{A\cap B}(x)$. Thanks to Remark \ref{rem:assumption}, we have $\|P_AP_BP_A(x) - P_{A \cap B}(x)\| > 0$. Using the orthogonality of the eigenvectors, we obtain
  \begin{equation}\label{}
    0 < \|P_AP_BP_A(x) - P_{A \cap B}(x)\|^2
    = \sum_{\lambda \in \Lambda(P_AP_BP_A) \setminus \{0,1\}} \lambda^2 \|u_{\lambda}\|^2.
  \end{equation}
  This shows that $\Lambda(x,P_AP_BP_A) \setminus \{0, 1\} \neq \emptyset$, say it consists of $\{\lambda_1 , \ldots , \lambda_m\}$ for some $m \geq 1$.

  Lemma \ref{lem:NullSet:S} applied to $P_AP_BP_A$ implies that $u_0 \in A^\perp \oplus (A \cap B^\perp)$. Moreover, $u_{\lambda_i} \in A$, $i = 1, \ldots, m$.  Thus, the equality $x = P_A(x)$, when combined with \eqref{pr:Lambdax:x}, implies that $u_0 = P_A(u_0)$. In particular, $u_0 \in A \cap B^\perp $ and, thanks to Lemma \ref{lem:EigenSet12:T}, we get $u_0 \in \mathcal N(\frac 1 2 - \frac {P_A+P_B}2)$.
  On the other hand, using Lemma \ref{lem:proj3}, for each $i = 1, \ldots, m$, we obtain
  \begin{equation}\label{pr:Lmabdax:ui}
     u_{\lambda_i}\quad
     = \quad \underbrace{\textstyle \frac 1 2 u_{\lambda_i} - \frac {1}{2 \sqrt {\lambda_i}} P_B(u_{\lambda_i})}_{ \substack{ w_i^-  \in \mathcal N \big(\mu_i^- - \frac{P_A+P_B}2  \big)\\ \neq 0 }}
    \quad + \quad
    \underbrace{\textstyle \frac 1 2 u_{\lambda_i} + \frac {1}{2 \sqrt {\lambda_i}} P_B(u_{\lambda_i})}_{\substack{w_i^+ \in \mathcal N \big(\mu_i^+ - \frac{P_A+P_B}2 \big) \\ \neq 0}},
  \end{equation}
  where $\mu_i^- := \frac 1 2 - \frac 1 2 \sqrt {\lambda_i}$ and $\mu_i^+ := \frac 1 2 + \frac 1 2 \sqrt {\lambda_i}$.

  Put $w_{\frac 1 2 } : = u_0$ and $w_1 := u_1$. We can now write
  \begin{equation} \label{pr:Lambdax:xA}
    x = w_{\frac 1 2} + \sum_{i=1}^m (w_i^- + w_i^+) + w_1,
  \end{equation}
  where the summands $w_{\frac 1 2}$, $w_i^-$, $w_i^+$ and $w_1$ belong to different eigenspaces $\mathcal N(\mu - \frac{P_A+P_B}2)$ with $\mu$ equal to $\frac 1 2$, $\mu_i^-$, $\mu_i^+$ and $1$, respectively, $i = 1, \ldots, m$. Moreover, $w_i^-$ and $w_i^+$ are nonzero, $i = 1, \ldots, m$. Using Lemma \ref{lem:uniqueness}, we obtain
  \begin{equation} \label{pr:Lambdax:mupm}\textstyle
    \Lambda(x, \frac{P_A+P_B}2) \setminus\{0,\frac 1 2, 1\}
    = \{\mu_1^-, \ldots \mu_m^-,\ \mu_1^+, \ldots,\mu_m^+\}.
  \end{equation}

  Clearly, $P_{B^\perp}(x) \in \mathcal N(P_BP_AP_B)$. Moreover, by Lemma \ref{lem:proj1}, $P_B(u_i) \in \mathcal N(\lambda_i - P_BP_AP_B)$ are nonzero, $i = 1,\ldots,m$. Furthermore, knowing that $u_0 \in A \cap B^\perp$, we have $P_B(u_0) = 0$. Thus, we can write
  \begin{equation}
    x = P_{B^\perp}(x) + P_B(x)
    = P_{B^\perp}(x) + \sum_{i=1}^m P_B(u_{\lambda_i}) + u_1,
  \end{equation}
  where again each summand, $P_{B^\perp}(x)$, $P_B(u_{\lambda_i})$ and $u_1$, belong to a different eigenspace $\mathcal N(\lambda - P_BP_AP_B)$ with $\lambda$ equal to $0$, $\lambda_i$ and $1$, respectively, $i = 1,\ldots,m$. Consequently, using Lemma \ref{lem:uniqueness}, we find that
  \begin{equation} \label{pr:Lambdax:lambda}
    \Lambda(x, P_BP_AP_B) \setminus \{0,1\} = \{\lambda_1, \ldots, \lambda_m\}.
  \end{equation}
  By combining \eqref{pr:Lambdax:mupm} with \eqref{pr:Lambdax:lambda}, we arrive at \eqref{thm:Lambdax:TandS}.

  \bigskip
  Assume now that $x \in B$. Thanks to Remark \ref{rem:assumption}, we have $\|P_BP_AP_B(x) - P_{A \cap B}(x)\| > 0$. We can thus repeat the argument from the previous case by simply switching the roles of the subspaces $A$ and $B$. For the convenience of the reader, we highlight the main steps.
  The subset $\Lambda(a_0, P_BP_AP_B) \setminus \{0,1\}$ is nonempty and consists of $\{\lambda_1, \ldots, \lambda_m\}$. Moreover,
  \begin{equation}\label{}
    x = v_0 + \sum_{i=1}^{m} v_{\lambda_i} + v_1,
  \end{equation}
  where $v_0 := P_{\mathcal N(P_BP_AP_B)}(x)$, $v_{\lambda_i} := P_{\mathcal N(\lambda_i - P_BP_AP_B)}(x) \neq 0$, $i = 1,\ldots,m$ and $v_1 \in A \cap B$. The assumption $a_0 \in B$ implies that $v_0 \in A^\perp \cap B$. Using an analogue of Lemma \ref{lem:proj3}, for each $i = 1, \ldots, m$, we get
  \begin{equation}\label{pr:Lmabdax:vi}
     v_{\lambda_i}\quad
     = \quad \underbrace{\textstyle \frac 1 2 v_{\lambda_i} - \frac {1}{2 \sqrt {\lambda_i}} P_A(v_{\lambda_i})}_{\substack{w_i^-  \in \mathcal N \big(\mu_i^- - \frac{P_A+P_B}2  \big)\\ \neq 0 }}
    \quad + \quad
    \underbrace{\textstyle \frac 1 2 v_{\lambda_i} + \frac {1}{2 \sqrt {\lambda_i}} P_A(v_{\lambda_i})}_{\substack{w_i^+ \in \mathcal N \big(\mu_i^+ - \frac{P_A+P_B}2 \big) \\ \neq 0}},
  \end{equation}
  where $\mu_i^- := \frac 1 2 - \frac 1 2 \sqrt {\lambda_i}$ and $\mu_i^+ := \frac 1 2 + \frac 1 2 \sqrt {\lambda_i}$. By putting $ w_{\frac 1 2 } : = v_0$ and $ w_1 := v_1$, we can again write
  \begin{equation} \label{pr:Lambdax:xB}
    x = w_{\frac 1 2} + \sum_{i=1}^m (w_i^- + w_i^+) + w_1,
  \end{equation}
  where the summands $w_{\frac 1 2}$, $w_i^-$, $w_i^+$ and $w_1$ belong to different eigenspaces $\mathcal N(\mu - \frac{P_A+P_B}2)$ with $\mu$ equal to $\frac 1 2$, $\mu_i^-$, $\mu_i^+$ and $1$, respectively, $i = 1, \ldots, m$. In particular, using Lemma \ref{lem:uniqueness}, we obtain an analogue of \eqref{pr:Lambdax:mupm}. We can also show that $\Lambda(x, P_AP_BP_A)\setminus\{0,1\} = \{\lambda_1, \ldots, \lambda_m\}$.
\end{proof}

Below we present an immediate consequence of Theorem \ref{thm:Lambdax}, which is an analogue of Corollary \ref{cor:Lambda}.

\begin{corollary} \label{cor:Lambdax}
  Let $x \in A \cup B$ be such that $\|P_AP_B(x) - P_{A \cap B}(x)\| > 0$. Then,
  \begin{equation}\label{cor:Lambdax:SandS'} \textstyle
    \Lambda(x,P_BP_AP_B) \setminus \{0,1\}
    = \{(2\mu - 1)^2 \colon \mu \in \Lambda(x, \frac{P_A+P_B}{2}) \setminus \{0, \frac 1 2 ,1\}\} \neq \emptyset.
  \end{equation}
\end{corollary}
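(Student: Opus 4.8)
The plan is to read this off Theorem \ref{thm:Lambdax} by inverting the square-root correspondence $\lambda \leftrightarrow \tfrac12 \pm \tfrac12\sqrt\lambda$; there is essentially no new content here, so the argument will be short. Write $\Sigma := \Lambda(x, P_BP_AP_B)\setminus\{0,1\}$ and $M := \Lambda(x, \tfrac{P_A+P_B}{2})\setminus\{0,\tfrac12,1\}$. Theorem \ref{thm:Lambdax} tells us both that $M \neq \emptyset$ (whence $\Sigma \neq \emptyset$, which already delivers the non-emptiness claimed in \eqref{cor:Lambdax:SandS'}) and that $M = \{\tfrac12 \pm \tfrac12\sqrt\lambda \colon \lambda \in \Sigma\}$. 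It remains to prove the set equality $\Sigma = \{(2\mu-1)^2 \colon \mu \in M\}$.

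For the inclusion ``$\supseteq$'', I would take an arbitrary $\mu \in M$; by the displayed identity of Theorem \ref{thm:Lambdax} we have $\mu = \tfrac12 + \tfrac12\sqrt\lambda$ or $\mu = \tfrac12 - \tfrac12\sqrt\lambda$ for some $\lambda \in \Sigma$, so in either case $2\mu-1 = \pm\sqrt\lambda$ and hence $(2\mu-1)^2 = \lambda \in \Sigma$. For the reverse inclusion ``$\subseteq$'', take $\lambda \in \Sigma$ and set $\mu := \tfrac12 + \tfrac12\sqrt\lambda$; Theorem \ref{thm:Lambdax} gives $\mu \in M$, and $(2\mu-1)^2 = (\sqrt\lambda)^2 = \lambda$, so $\lambda$ lies in the right-hand set. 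Combining the two inclusions and the non-emptiness observation yields \eqref{cor:Lambdax:SandS'}.

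I do not anticipate any genuine obstacle: the only points requiring a moment's care are that squaring $2\mu - 1 = \pm\sqrt\lambda$ erases the sign ambiguity correctly, and that the hypothesis $\|P_AP_B(x) - P_{A\cap B}(x)\| > 0$ is invoked solely because it is verbatim the hypothesis of Theorem \ref{thm:Lambdax}, so nothing beyond that theorem is needed. This is the exact analogue of the passage from Theorem \ref{thm:Lambda} to Corollary \ref{cor:Lambda}, now carried out with $\Lambda(x,\cdot)$ in place of $\Lambda(\cdot)$.
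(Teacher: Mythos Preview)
Your proposal is correct and matches the paper's approach exactly: the paper states Corollary \ref{cor:Lambdax} as ``an immediate consequence of Theorem \ref{thm:Lambdax}'' with no further proof, and your argument simply makes explicit the inversion of the $\lambda \leftrightarrow \tfrac12 \pm \tfrac12\sqrt\lambda$ correspondence, exactly as in the passage from Theorem \ref{thm:Lambda} to Corollary \ref{cor:Lambda}.
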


Clearly, we can again replace the product $P_BP_AP_B$ by $P_AP_BP_A$ in Theorem \ref{thm:Lambdax} and in Corollary \ref{cor:Lambdax}. In particular, analogously to \eqref{rem:multiplicity:SandS'}, for $x \in A \cup B$ with $ \|P_AP_B(x) - P_{A \cap B}(x)\| > 0 $, we get
  \begin{equation}\label{rem:Lambdax:SandS'}
    \Lambda(x,P_BP_AP_B) \setminus \{0, 1\} = \Lambda(x,P_AP_BP_A) \setminus \{0, 1\}.
  \end{equation}
In the example below, we show that equalities \eqref{thm:Lambdax:TandS}, \eqref{cor:Lambdax:SandS'} and \eqref{rem:Lambdax:SandS'} do not hold for all $x \in \mathbb R^d$.

\begin{example} \label{ex:Lambdax}
  Let $\lambda \in (0,1)$ and assume that $u \in \mathcal N(\lambda - P_AP_BP_A)$ is nonzero. Put $x := P_{B^\perp}(u)$. We claim that for $\mu_\pm:= \frac 1 2 \pm \frac 1 2 \sqrt{\lambda}$, we have
  \begin{equation}\label{ex:Lambdax:eq1} \textstyle
    \Lambda(x, P_BP_AP_B) = \{0\},\
    \Lambda(x, P_AP_BP_A) = \{0, \lambda\} \  \text{and} \
    \Lambda(x, \frac{P_A+P_B}2) = \{\mu_+,\mu_-\}.
  \end{equation}
\end{example}

\begin{proof}
  It is clear that $x \in \mathcal N(P_BP_AP_B)$ and $x \neq 0$; see \eqref{lem:proj1:PN}. Consequently, $\Lambda(x, P_BP_AP_B) = \{0\}$. On the other hand, by Lemma \ref{lem:proj1} (and its analogoue, where $A \leftrightarrow B$), the vectors $P_B(u) \in \mathcal N(\lambda - P_BP_AP_B)$ and $P_{A^\perp}P_B(u) \in \mathcal N(P_AP_BP_A)$ are both nonzero. Using the orthogonal decomposition theorem, we arrive at the following representation of $x$  for $P_AP_BP_A$:
  \begin{align}\label{} \nonumber
    x & = u - P_B(u) = u - (P_AP_B(u) + P_{A^\perp}P_B(u)) \\ \nonumber
    & = u - P_AP_BP_A(u)  -  P_{A^\perp}P_B(u) \\
    & = \underbrace{(1-\lambda) u}_{\substack{\in \mathcal N(\lambda-P_AP_BP_A) \\ \neq 0}} - \quad \underbrace{P_{A^\perp}P_B(u)}_{\substack{\in \mathcal N(P_AP_BP_A) \\ \neq 0}}.
  \end{align}
  This, when combined with Lemma \ref{lem:uniqueness}, implies that $\Lambda(x, P_AP_BP_A) = \{0, \lambda\}$.

  We now show the last equality of \eqref{ex:Lambdax:eq1}. By \eqref{pr:proj3:SDv}, we have $u = w_- + w_+$, where $w_{\pm} := \frac 1 2 u \pm \frac {1}{2 \sqrt \lambda} P_B(u) \in \mathcal N(\mu_\pm - \frac{P_A+P_B}{2})$ are nonzero. Moreover,
  \begin{align}\label{} \nonumber
    P_B(u) & = \textstyle \left( \frac 1 2 P_B(u) + \frac{1}{2\sqrt \lambda} P_AP_B(u) \right)
           + \left( \frac 1 2 P_B(u) - \frac{1}{2\sqrt \lambda} P_AP_B(u) \right) \\
           &= \sqrt \lambda w_+ - \sqrt \lambda w_-,
  \end{align}
  so that $x = u - P_B(u) = (1 - \sqrt \lambda) w_+ + (1 + \sqrt \lambda) w_-$. This, when combined with Lemma \ref{lem:uniqueness}, implies that $\Lambda(x, \frac{P_A+P_B}2) = \{\mu_+,\mu_-\}$.
\end{proof}

\section{Rates of Convergence} \label{sec:rates}
 Let $A$ and $B$ be two nontrivial ($\neq \{0\}$, $\neq \mathbb R^d$) subspaces of $\mathbb R^d$.  We express the rates of $Q$-linear convergence for the MAP and the MSP using the eigenvalues of the operators $P_BP_AP_B$ and $\frac{P_A + P_B}{2}$.
\begin{theorem} \label{thm:main}
  Let the sequences $\{a_k\}_{k=0}^\infty$ and $\{x_k\}_{k=0}^\infty$ be defined by the MAP \eqref{int:ak} and the MSP \eqref{int:xk}, respectively, where $a_0 = x_0 \in \mathbb R^d$. Assume that $\|a_1 - P_{A \cap B}(a_0)\| > 0$ ( see Remarks \ref{rem:parallel} and \ref{rem:assumption}). Then, for each $k = 1,2, \ldots$, we have
  \begin{equation}\label{thm:main:xkak}
    \|a_k - P_{A \cap B}(a_0)\| > 0 \quad \text{and} \quad \|x_k - P_{A \cap B}(x_0)\| >0.
  \end{equation}
  Moreover,  for each $k = 0, 1, 2 , \ldots$, we have
  \begin{equation}\label{thm:main:ineq:ak}
    \|a_{k+1} - P_{A \cap B}(a_0)\| \leq
    \left\{
    \begin{array}{cc}
      \sqrt{\lambda}, & \text{if }\ k = 0 \\
      \lambda, & \text{if }\ k \geq 1
    \end{array}
    \right\} \cdot \|a_k - P_{A \cap B}(a_0)\|
  \end{equation}
  and
  \begin{equation}\label{thm:main:ineq:xk}
    \|x_{k+1} - P_{A \cap B}(x_0)\| \leq \mu \|x_k - P_{A \cap B}(x_0)\|,
  \end{equation}
  where  the rates of $Q$-linear convergence satisfy
  \begin{equation}\label{thm:main:lambda}
    \lambda := \lim_{k\to\infty} \frac{\|a_{k+1} - P_{A \cap B}(a_0)\|}{\|a_k - P_{A \cap B}(a_0)\|}
    = \max \Big\{\Lambda(a_0, P_BP_AP_B) \setminus \{0,1\} \Big\}
  \end{equation}
  and
  \begin{equation}\label{thm:main:mu}
    \mu := \lim_{k\to\infty} \frac{\|x_{k+1} - P_{A \cap B}(x_0)\|}{\|x_k - P_{A \cap B}(x_0)\|}
    = \max \Big\{\Lambda (\textstyle x_0, \frac{P_A+P_B}2) \setminus\{0,1\} \Big\}.
  \end{equation}
  In particular, $\lambda \in (0, \cos^2(\theta_F)]$ and $\mu \in [\frac 1 2 - \frac 1 2 \cos(\theta_F), \frac 1 2 + \frac 1 2 \cos(\theta_F)]$.
\end{theorem}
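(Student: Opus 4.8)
The plan is to apply the Spectral Theorem (Theorem \ref{thm:SDT}) to the self-adjoint operators $S := P_BP_AP_B$ and $T := \frac{P_A+P_B}{2}$, after first reducing both iterations to the convenient form $a_k - P_{A\cap B}(a_0) = (S - P_{A\cap B})^{?}(\cdots)$ — more precisely, using Fact \ref{fact:commutingProj} to write $(P_AP_B)^k = P_A(P_BP_A)^{k-1}P_B$ and the commutation relations $P_A P_{A\cap B} = P_{A\cap B} = P_B P_{A\cap B}$ to obtain, for $k\ge 1$,
\begin{equation*}
  a_k - P_{A\cap B}(a_0) = (P_AP_B)^k(a_0) - P_{A\cap B}(a_0) = P_A (P_BP_AP_B)^{k-1}\bigl(P_B(a_0) - P_{A\cap B}(a_0)\bigr),
\end{equation*}
and similarly $x_k - P_{A\cap B}(x_0) = \bigl(T - P_{A\cap B}\bigr)^k(x_0) = (T')^k(x_0 - P_{A\cap B}(x_0))$ where $T' := T - P_{A\cap B}$. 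Then I would expand $x_0 - P_{A\cap B}(x_0)$ (resp.\ $P_B(a_0) - P_{A\cap B}(a_0)$, or better $a_0 - P_{A\cap B}(a_0)$ in the spectral basis of $P_AP_BP_A$) in the spectral representation and track the eigenvalue contributions, using Corollary \ref{cor:NullSets} to identify which part of the starting vector dies, contributes a fixed vector, or decays.

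First I would prove the positivity claims \eqref{thm:main:xkak}. For the MSP this is clean: $T' = T - P_{A\cap B}$ is self-adjoint with all eigenvalues in $[0,1)$ on the relevant invariant subspace (by Corollary \ref{cor:NullSets:T} and Theorem \ref{thm:Lambda}, the eigenvalue $1$ of $T$ is exactly $A\cap B$, killed by subtracting $P_{A\cap B}$), so $x_k - P_{A\cap B}(x_0) = \sum_{\mu} \mu^k z_\mu$ over $\mu \in \Lambda(x_0,T)\setminus\{1\} \subset [0,1)$; the hypothesis $\|a_1 - P_{A\cap B}(a_0)\|>0$ forces (via Remark \ref{rem:assumption} and Corollary \ref{cor:NullSets}, noting $\mathcal N(T' ) \supseteq \mathcal N(1/2 - T)$ lies in $\mathcal N(P_AP_B - P_{A\cap B})$) at least one such $\mu$ with $\mu \neq 0$ and $z_\mu \neq 0$, hence $x_k \neq P_{A\cap B}(x_0)$ for all $k$. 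For the MAP I would instead work with $\tilde u := a_0 - P_{A\cap B}(a_0)$, decompose it for $P_AP_BP_A$ via Theorem \ref{thm:SDT}, note $a_k - P_{A\cap B}(a_0) = (P_AP_BP_A)^{k-1}P_A(a_0)$-type formula collapses (using $P_A^2 = P_A$) so that the $\mathcal N(P_AP_BP_A)$-component of $\tilde u$ contributes only through the single first projection and then vanishes, leaving $a_k - P_{A\cap B}(a_0) = \sum_{\lambda} \lambda^{k-1}(\text{something})$ with $\lambda \in (0,1)$; again the nonvanishing hypothesis guarantees some such $\lambda$ with nonzero component, giving \eqref{thm:main:xkak}.

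Next, the one-step inequalities \eqref{thm:main:ineq:ak}–\eqref{thm:main:ineq:xk} and the limits \eqref{thm:main:lambda}–\eqref{thm:main:mu}. For \eqref{thm:main:ineq:xk} this is just $\|T'(y)\| \le \|T'\|\cdot\|y\|$ applied iteratively, combined with the observation that the relevant restricted operator norm equals $\mu = \max\Lambda(x_0, T)\setminus\{0,1\}$ because all other eigenvalue-components of the starting vector are zero. For \eqref{thm:main:ineq:ak} the asymmetry between $k=0$ and $k\ge 1$ comes precisely from the extra projection $P_A$ (or $P_B$): passing from $a_0$ to $a_1$ costs only one factor $\|P_AP_B - P_{A\cap B}\| = \cos\theta_F$-type bound but more sharply $\sqrt{\lambda^{(0)}}$ where $\lambda^{(0)}$ is the largest active eigenvalue of $P_AP_BP_A$, whereas subsequent steps cost a full $\lambda$ by \eqref{int:KW88:eq}-style reasoning on $P_BP_AP_B$; I would make this precise by the Cauchy–Schwarz/norm computation $\|P_B(u)\|^2 = \lambda\|u\|^2$ from \eqref{pr:proj1:norm} applied componentwise. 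The limits follow from the standard fact that for $y = \sum \lambda^k y_\lambda$ with distinct $\lambda \in (0,1)$ and $y_{\lambda^*}\neq 0$ at the top, $\|T^{k+1}y\|/\|T^k y\| \to \lambda^*$; one applies this to the spectral expansions just obtained, and the values $\lambda^* = \max\Lambda(a_0, P_BP_AP_B)\setminus\{0,1\}$, $\mu^* = \max\Lambda(x_0, \frac{P_A+P_B}{2})\setminus\{0,1\}$ read off directly. Finally, $\lambda \in (0,\cos^2\theta_F]$ and $\mu \in [\frac12 - \frac12\cos\theta_F, \frac12 + \frac12\cos\theta_F]$ follow from \eqref{pr:Lambda:cos} and Theorem \ref{thm:Lambda} since the active eigenvalue sets are subsets of the full spectra.

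The main obstacle I anticipate is bookkeeping the difference between $\Lambda(a_0, P_AP_BP_A)$ and $\Lambda(a_0, P_BP_AP_B)$ — the MAP iterates naturally live in $A$ after the first step, so the "honest" operator governing $\{a_k\}_{k\ge1}$ is $P_BP_AP_B$ acting on $P_B(a_0)$ (or $P_AP_BP_A$ acting on $P_A(a_0)=$ part of $a_0$), and one must carefully use Lemma \ref{lem:proj1} together with \eqref{rem:multiplicity:SandS'} to see that the relevant eigenvalue set is the same whether one phrases it via $a_0$ and $P_BP_AP_B$ as in \eqref{thm:main:lambda}, and that the $\sqrt{\lambda}$ versus $\lambda$ discrepancy at the first step is exactly accounted for. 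Getting \eqref{thm:main:lambda} stated cleanly in terms of $a_0$ (not $P_B a_0$) requires invoking the nullspace identifications of Corollary \ref{cor:NullSets} to confirm that projecting $a_0$ onto $B$ (or the extra $P_A$) neither creates nor destroys any active eigenvalue in $(0,1)$ — this is where Lemmata \ref{lem:proj1} and \ref{lem:NullSet:S} do the real work.
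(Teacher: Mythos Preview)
Your proposal is essentially correct and follows the same line as the paper's proof: spectrally decompose the starting point for $P_BP_AP_B$ (resp.\ $\frac{P_A+P_B}{2}$), write $a_k - P_{A\cap B}(a_0) = P_A(P_BP_AP_B)^{k-1}P_B(a_0) - P_{A\cap B}(a_0)$, and read off positivity, the one-step inequalities and the $Q$-linear rates from the explicit formula $\|a_k - P_{A\cap B}(a_0)\|^2 = \sum_i \lambda_i^{2k-1}\|v_{\lambda_i}\|^2$ (resp.\ $\|x_k - P_{A\cap B}(x_0)\|^2 = \sum_i \mu_i^{2k}\|w_{\mu_i}\|^2$).

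One simplification: the obstacle you anticipate --- reconciling $\Lambda(a_0,P_AP_BP_A)$ with $\Lambda(a_0,P_BP_AP_B)$ --- does not actually arise if you decompose $a_0$ directly for $P_BP_AP_B$ (not for $P_AP_BP_A$) from the outset, as the paper does. Writing $a_0 = v_0 + v_1 + \sum_i v_{\lambda_i}$ in the $P_BP_AP_B$-basis, the identities $P_AP_B(v_0)=0$ (Lemma~\ref{lem:NullSet:S}), $P_B(v_{\lambda_i})=v_{\lambda_i}$ and $P_A(v_1)=P_B(v_1)=v_1$ immediately give $a_k - P_{A\cap B}(a_0) = \sum_i \lambda_i^{k-1} P_A(v_{\lambda_i})$, and then Lemma~\ref{lem:proj1} tells you the $P_A(v_{\lambda_i})$ are pairwise orthogonal (as eigenvectors of $P_AP_BP_A$ for distinct eigenvalues) with $\|P_A(v_{\lambda_i})\|^2 = \lambda_i\|v_{\lambda_i}\|^2$. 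This yields the closed form above with the index set \emph{exactly} $\Lambda(a_0,P_BP_AP_B)\setminus\{0,1\}$, and the $\sqrt{\lambda}$ at $k=0$ versus $\lambda$ for $k\ge 1$ drops out of the exponent $2k-1$ without any separate Cauchy--Schwarz step. Your detour through $P_AP_BP_A$ and the worry about ``creating or destroying active eigenvalues'' under $P_B$ is avoidable.
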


\begin{proof}
  First, we establish the above-mentioned properties for the sequence $\{a_k\}_{k=0}^\infty$. Using the spectral theorem (Theorem \ref{thm:SDT}), we have
  \begin{equation}\label{}
    a_0 = v_0 + v_1 + \sum_{\lambda \in \Lambda(P_BP_AP_B) \setminus \{0,1\}} v_{\lambda},
  \end{equation}
  where $ v_0 := P_{\mathcal N(P_BP_AP_B)}(a_0)$ and $v_{\lambda} := P_{\mathcal N(\lambda - P_BP_AP_B)}(a_0)$, $\lambda \in \Lambda(P_BP_AP_B) \setminus \{0\}$. Note that $v_1 = P_{A\cap B}(a_0)$. By combining the assumption $\|P_AP_B(a_0) - P_{A \cap B(a_0)}\| > 0$ with \eqref{cor:NullSets:S}, we see that $\|P_BP_AP_B(a_0) - P_{A \cap B}(a_0)\| > 0$. Using the orthogonality of the eigenvectors, we have
  \begin{equation}\label{}
    0 < \|P_BP_AP_B(a_0) - P_{A \cap B}(a_0)\|^2
    = \sum_{\lambda \in \Lambda(P_BP_AP_B) \setminus \{0,1\}} \lambda^2 \|v_{\lambda}\|^2.
  \end{equation}
  This shows that $\Lambda(a_0,P_BP_AP_B) \setminus \{0, 1\} \neq \emptyset$, say it consists of $\{\lambda_1 > \ldots > \lambda_m\}$ for some $m \geq 1$. On the other hand, using the identities:  $P_AP_B(v_0) = 0$ (see \eqref{lem:NullSet:S*S}), $P_B(v_{\lambda_i}) = v_{\lambda_i}$ and  $P_A(v_1) =  P_B(v_1)  = v_1$, for each $k =  1, 2, \ldots$, we have
  \begin{equation}\label{}
    a_k - P_{A\cap B}(a_0) = P_A(P_BP_AP_B)^{k-1} P_B  (a_0) - v_1
    = \sum_{i=1}^{m}\lambda_i^{k-1} P_A(v_{\lambda_i}).
  \end{equation}
  The vectors $P_A(v_{\lambda_i}) \in \mathcal N(\lambda_i - P_AP_BP_A)$ are nonzero, $i = 1,\ldots, m$, and pairwise orthogonal, as they belong to different eigenspaces; see Lemma \ref{lem:proj1}. Moreover, $\|P_A(v_{\lambda_i})\| = \sqrt{\lambda_i}\|v_{\lambda_i}\|$, $i=1,\ldots,m$; compare with \eqref{pr:proj1:norm}. Thus,  for $k = 1, 2, \ldots$,  we have
  \begin{equation}\label{pr:main:ak}
    \|a_k - P_{A\cap B}(a_0)\|^2 = \sum_{i=1}^{m} \lambda_i^{2(k-1)}\|P_A(v_{\lambda_i})\|^2
    = \sum_{i=1}^{m} \lambda_i^{2k-1}\|v_{\lambda_i}\|^2 > 0.
  \end{equation}
   Consequently, for $k = 0$, we obtain
  \begin{equation}\label{}
    \|a_1 - P_{A \cap B}(a_0)\|^2
    \leq \lambda_1 \sum_{i=1}^{m} \|v_{\lambda_i}\|^2
    \leq \lambda_1 \|a_0 - P_{A \cap B}(a_0)\|^2
  \end{equation}
  while for each $k = 1, 2, \ldots$, we arrive at
  \begin{equation}\label{}
    \|a_{k+1} - P_{A \cap B}(a_0)\|^2
    \leq \lambda_1^2 \sum_{i=1}^{m} \lambda_i^{2k-1}\|v_{\lambda_i}\|^2
    = \lambda_1^2 \|a_k - P_{A \cap B}(a_0)\|^2.
  \end{equation}
  This shows \eqref{thm:main:ineq:ak}. Furthermore, we see that
  \begin{equation}\label{}
    \frac{\|a_{k+1} - P_{A \cap B}(a_0)\|^2}{\|a_k - P_{A \cap B}(a_0)\|^2}
    = \frac{\sum_{i=1}^{m} \lambda_i^2 \left(\frac{\lambda_i}{\lambda_1}\right)^{2k-1}\|v_{\lambda_i}\|^2}{\sum_{i=1}^{m} \left(\frac{\lambda_i}{\lambda_1}\right)^{2k-1}\|v_{\lambda_i}\|^2} \to \lambda_1^2
  \end{equation}
  as $k \to \infty$, since $\frac{\lambda_i}{\lambda_1} < 1$ except for $i = 1$. This proves \eqref{thm:main:lambda}.

  \bigskip
  We now turn our attention to the properties of the sequence $\{x_k\}_{k=0}^\infty$. By \eqref{cor:NullSets:T}, we have $\|x_1 - P_{A \cap B}(x_0)\| > 0$. By using an analogous argument to the one presented above, we deduce that $\Lambda(x, \frac{P_A+P_B}{2}) \setminus \{0,1\} \neq \emptyset$, say it consists of $\{\mu_1 > \ldots > \mu_n\}$ for some $n \geq 1$. Using Theorem \ref{thm:SDT}, we can write
  \begin{equation}\label{}
    x_0 = w_0 + w_1 + \sum_{i=1}^{n} w_{\mu_i},
  \end{equation}
  where $w_0 := P_{\mathcal N(\frac{P_A+P_B}2)}(x_0)$, $w_{\mu_i} := P_{\mathcal N(\mu_i - \frac{P_A+P_B}2)}(x_0) \neq 0$, $i = 1, \ldots, n$, where again $w_1 = P_{A\cap B}(x_0)$. Knowing that $w_{\mu_i}$ are pairwise orthogonal, we have
  \begin{equation}\label{}
    \|x_k - P_{A \cap B}(x_0)\|^2 = \sum_{i=1}^{n} \mu_i^{2k} \|w_{\mu_i}\|^2 > 0.
  \end{equation}
   Consequently, for each $k = 0, 1, 2, \ldots$, we obtain
  \begin{equation}\label{}
    \|x_{k+1} - P_{A \cap B}(x_0)\|^2
    \leq \mu_1^2 \sum_{i=1}^{n} \mu_i^{2k}\|w_{\mu_i}\|^2
    = \mu_1^2 \|x_k - P_{A \cap B}(x_0)\|^2,
  \end{equation}
  which shows \eqref{thm:main:ineq:xk}. Moreover,
  \begin{equation}\label{}
    \frac{\|x_{k+1} - P_{A \cap B}(a_0)\|^2}{\|x_k - P_{A \cap B}(a_0)\|^2}
    = \frac{\sum_{i=1}^{m} \mu_i^2 \left(\frac{\mu_i}{\mu_1}\right)^{2k}\|w_{\mu_i}\|^2}{\sum_{i=1}^{m} \left(\frac{\mu_i}{\mu_1}\right)^{2k}\|w_{\mu_i}\|^2} \to \mu_1^2
  \end{equation}
  as $k \to \infty$. This proves \eqref{thm:main:mu} and completes the proof of the theorem.
\end{proof}

\begin{remark} \label{rem:cos12}

\begin{enumerate}[(i)]
  \item  Assume that $\theta_F > \frac \pi 3$ (equivalently, $\cos(\theta_F) < 1/2$).  Then the rates $\lambda$ and $\mu$ defined in \eqref{thm:main:lambda} and \eqref{thm:main:mu} satisfy $\mu \geq \frac 1 2 - \frac 1 2 \cos(\theta_F) > \cos^2(\theta_F) \geq \lambda$ and we conclude that the MAP is significantly faster than the MSP. In particular, starting from some point $k$ onward, we will have $\|x_k - P_{A \cap B}(x_0)\| > \|a_k - P_{A \cap B}(a_0)\|$.

  \item When  $\theta_F < \frac \pi 3$ (equivalently, $\cos(\theta_F) > 1/2$),  we can always find starting points for which the MSP outperforms the MAP. Indeed, is suffices to take any nonzero $a_0 = x_0 = w \in \mathcal N((\frac 1 2 - \frac 1 2 \cos(\theta_F)) - \frac{P_A+P_B}{2})$. Clearly, $\Lambda(x_0, \frac{P_A+P_B}2) = \{\frac 1 2 - \frac 1 2 \cos(\theta_F)\}$. Moreover, $\Lambda(a_0, P_BP_AP_B) = \{0, \cos^2(\theta_F)\}$ (write $a_0 = P_{B^\perp}(w) + P_B(w)$ and use Lemma \ref{lem:proj2}). Therefore, in this case, the rates $\lambda$ and $\mu$ defined in \eqref{thm:main:lambda} and \eqref{thm:main:mu} satisfy $\mu = \frac 1 2 - \frac 1 2 \cos(\theta_F) < \cos^2(\theta_F) = \lambda$. In fact, by \eqref{pr:proj2:PAPBw} and since $\|P_A(w)\| = \mu \|w\|$ (see \eqref{lem:orthogonality:2}), for each $k = 1,2, \ldots,$ we obtain
      \begin{align}\label{rem:cos12:ineq} \nonumber
        \|a_k - P_{A \cap B}(a_0)\|^2 & = \|(P_AP_B)^{k} (w)\|^2 = \lambda^{2k-1} \|P_A(w)\|^2\\
        &  = \lambda^{2k-1} \mu \|w\|^2 > \mu^{2k}\|w\|^2 = \|x_k - P_{A \cap B}(x_0)\|^2.
      \end{align}
      Note that for $\cos(\theta_F) = \frac 1 2$, we have $\lambda = \mu$ and \eqref{rem:cos12:ineq} holds as an equality.
\end{enumerate}
\end{remark}

\begin{remark}[Alternative for MAP]
It follows from Theorem \ref{thm:main} that the sequence $\{a_k\}_{k=0}^\infty$ defined by the MAP will either reach the solution after one iteration or that it will not reach the solution after any finite number of steps. Surprisingly, the same alternative holds true when $A$ and $B$ are both closed and convex subsets of $\mathbb R^d$ having nonempty intersection; see \cite[Theorem 7]{LukeTeboulleThao2020}.
\end{remark}

\begin{remark}[Comparing with \cite{KayalarWeinert1988} and \cite{ReichZalas2017}] \label{rem:KW88andRZ17} Let the sequences $\{a_k\}_{k=0}^\infty$ and $\{x_k\}_{k=0}^\infty$ be defined as in Theorem \ref{thm:main}. It follows from \eqref{int:KW88:eq} and \eqref{int:RZ17:eq}, that correspond to \cite{KayalarWeinert1988} and \cite{ReichZalas2017}, respectively, that
\begin{equation}\label{}
  \|a_k - P_{A \cap B}(a_0)\| \leq \cos^{2k-1}(\theta_F) \|a_0\|
\end{equation}
and
\begin{equation}\textstyle
  \|x_k - P_{A \cap B}(x_0)\| \leq \left(\frac 1 2 + \frac 1 2 \cos(\theta_F) \right)^k \|x_0\|.
\end{equation}
In particular, we obtain $R$-linear convergence. On the other hand, using Theorem \ref{thm:main}, we arrive at $Q$-linear convergence. Moreover,
\begin{equation}\label{rem:KW88andRZ17:inek:ak}
    \|a_{k+1} - P_{A \cap B}(a_0)\| \leq
    \left\{
    \begin{array}{cc}
      \cos(\theta_F), & \text{if }\ k = 0 \\
      \cos^2(\theta_F), & \text{if }\ k \geq 1
    \end{array}
    \right\} \cdot \|a_k - P_{A \cap B}(a_0)\|
  \end{equation}
and
\begin{equation}\label{rem:KW88andRZ17:inek:xk} \textstyle
  \|x_{k+1} - P_{A \cap B}(x_0)\| \leq \left(\frac 1 2 + \frac 1 2 \cos(\theta_F) \right) \|x_k - P_{A \cap B}(x_0)\|,
\end{equation}
$k = 0, 1, 2, \ldots$. Inequalities \eqref{rem:KW88andRZ17:inek:ak} and \eqref{rem:KW88andRZ17:inek:xk} appear to be less known, if not new.
\end{remark}

\begin{remark}[Directional Asymptotics]
  By slightly adjusting the argument in the proof of Theorem \ref{thm:main}, we see that
  \begin{equation}\label{rem:DirAsy:eq1}
    \lim_{k \to \infty} \frac{a_k - P_{A \cap B}(a_0)}{\|a_k - P_{A \cap B}(a_0)\|}
    = \lim_{k \to \infty} \frac{a_k - a_{k+1}}{\|a_k - a_{k+1}\|} = \frac{P_A(  v_\lambda )}{\|P_A( v_\lambda )\|},
  \end{equation}
  where  $\lambda$ is given by \eqref{thm:main:lambda} and $v_\lambda := P_{\mathcal N(\lambda - P_BP_AP_B)}(a_0)$.  Similarly, we obtain
  \begin{equation}\label{rem:DirAsy:eq2}
    \lim_{k \to \infty} \frac{x_k - P_{A \cap B}(x_0)}{\|x_k - P_{A \cap B}(x_0)\|}
    = \lim_{k \to \infty} \frac{x_k - x_{k+1}}{\|x_k - x_{k+1}\|} = \frac{ w_\mu }{\| w_\mu \|},
  \end{equation}
  where  $\mu$ is given by \eqref{thm:main:mu} and where $w_\mu := P_{\mathcal N(\mu - \frac{P_A+P_B}{2})}(x_0)$.  The above-mentioned directional asymptotics were of particular interest in \cite[Theorem 3.1]{BauschkeLalXianfu2023}, in which the authors considered general Fej\'{e}r monotone sequences.  The limits in \eqref{rem:DirAsy:eq1}--\eqref{rem:DirAsy:eq2} are also related to the so-called \emph{power iteration} which is used to find eigenpairs of diagonalizable matrices; see, for instance, \cite[Example 7.3.7]{Meyer2000}.
\end{remark}

We are now ready to present our main result.

\begin{theorem} \label{thm:main2}
  Let the sequences $\{a_k\}_{k=0}^\infty$ and $\{x_k\}_{k=0}^\infty$ be defined by the MAP \eqref{int:ak} and the MSP \eqref{int:xk}, respectively, where $a_0 = x_0 \in A \cup B$. Assume that $\|a_1 - P_{A \cap B}(a_0)\| > 0$  ( see Remarks \ref{rem:parallel} and \ref{rem:assumption}).  Then, for each $k = 1,2, \ldots$, we have
  \begin{equation}\label{thm:main2:xkak}
    \|x_k - P_{A \cap B}(x_0)\| > \|a_k - P_{A \cap B}(a_0)\| > 0
  \end{equation}
  and the rates of $Q$-linear convergence $\lambda$ and $\mu$ defined in \eqref{thm:main:lambda} and \eqref{thm:main:mu}, respectively, satisfy
  \begin{equation}\label{thm:main2:lambdaANDmu} \textstyle
    \mu = \frac 1 2 + \frac 1 2 \sqrt{\lambda} > \lambda.
  \end{equation}
  In particular, for the above-described set of starting points, the MAP outperforms the MSP.
\end{theorem}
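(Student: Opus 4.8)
The plan is to read off the rate identity \eqref{thm:main2:lambdaANDmu} from Theorems \ref{thm:main} and \ref{thm:Lambdax}, and then to establish \eqref{thm:main2:xkak} by expanding both error terms spectrally and reducing the comparison to one elementary scalar inequality. Since $a_0 = x_0 \in A \cup B$ and $\|a_1 - P_{A \cap B}(a_0)\| > 0$, Remark \ref{rem:assumption} gives $\|P_AP_B(a_0) - P_{A \cap B}(a_0)\| > 0$, so Theorem \ref{thm:Lambdax} applies with $x = a_0 = x_0$. By \eqref{thm:main:lambda}, $\lambda = \max\big(\Lambda(a_0, P_BP_AP_B)\setminus\{0,1\}\big) \in (0,\cos^2(\theta_F)] \subset (0,1)$. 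By \eqref{thm:Lambdax:TandS}, the set $\Lambda(x_0, \frac{P_A+P_B}{2})\setminus\{0,\frac12,1\}$ equals $\{\frac12 \pm \frac12\sqrt\sigma : \sigma \in \Lambda(a_0,P_BP_AP_B)\setminus\{0,1\}\}$, whose largest element is $\frac12 + \frac12\sqrt\lambda > \frac12$; hence the possible extra eigenvalue $\frac12$ does not change the maximum, and \eqref{thm:main:mu} gives $\mu = \frac12 + \frac12\sqrt\lambda$. With $t := \sqrt\lambda \in (0,1)$ we get $\mu - \lambda = \frac12 + \frac12 t - t^2 = \frac12(2t+1)(1-t) > 0$, which is \eqref{thm:main2:lambdaANDmu}; the positivity statements in \eqref{thm:main2:xkak} are part of Theorem \ref{thm:main}.

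\textbf{Error comparison.} Suppose $x_0 \in B$; the case $x_0 \in A$ is analogous. Let $\{\lambda_1 > \ldots > \lambda_m\} := \Lambda(a_0,P_BP_AP_B)\setminus\{0,1\}$ with $m \geq 1$, and put $v_i := P_{\mathcal N(\lambda_i - P_BP_AP_B)}(x_0) \neq 0$ and $\mu_i^\pm := \frac12 \pm \frac12\sqrt{\lambda_i}$. By \eqref{pr:main:ak}, $\|a_k - P_{A\cap B}(a_0)\|^2 = \sum_{i=1}^m \lambda_i^{2k-1}\|v_i\|^2$ for $k \geq 1$. On the other hand, by the proof of Theorem \ref{thm:Lambdax} (case $x \in B$), the spectral representation of $x_0$ for $\frac{P_A+P_B}{2}$ contains the nonzero components $w_i^\pm = \frac12 v_i \pm \frac{1}{2\sqrt{\lambda_i}}P_A(v_i) \in \mathcal N(\mu_i^\pm - \frac{P_A+P_B}{2})$, and the $A \leftrightarrow B$ version of \eqref{lem:orthogonality:3} gives $\|w_i^\pm\|^2 = \mu_i^\pm\|v_i\|^2$. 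Inserting these into the MSP expansion from the proof of Theorem \ref{thm:main} and discarding the nonnegative contribution of the eigenvalue $\frac12$, we get, for $k \geq 1$,
\[
  \|x_k - P_{A\cap B}(x_0)\|^2 \ \geq\ \sum_{i=1}^m\big((\mu_i^-)^{2k}\|w_i^-\|^2 + (\mu_i^+)^{2k}\|w_i^+\|^2\big) = \sum_{i=1}^m\big((\mu_i^-)^{2k+1}+(\mu_i^+)^{2k+1}\big)\|v_i\|^2 .
\]
As $m \geq 1$ and each $\|v_i\| > 0$, and since the two error terms agree at $k = 0$, inequality \eqref{thm:main2:xkak} will follow once we prove $(\mu_i^+)^{2k+1} > \lambda_i^{2k-1}$ for every $i$ and every $k \geq 1$. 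In the case $x_0 \in A$ the identical bookkeeping leaves the weaker requirement $(\mu_i^+)^{2k+1} > \lambda_i^{2k}$, which is implied by the above since $\lambda_i < 1$.

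\textbf{Scalar inequality.} It remains to show: for all $\sigma \in (0,1)$ and integers $k \geq 1$, $\big(\frac12 + \frac12\sqrt\sigma\big)^{2k+1} > \sigma^{2k-1}$. Writing $p := \frac12 + \frac12\sqrt\sigma \in (\frac12,1)$ and taking logarithms, this is $\psi(k) := (2k+1)\ln p - (2k-1)\ln\sigma > 0$. Since $p > \sigma$ (the same computation as for $\mu > \lambda$), $\psi'(k) = 2\ln(p/\sigma) > 0$, so it suffices to check $\psi(1) > 0$, i.e. $p^3 > \sigma$. With $t := \sqrt\sigma$ this is $(1+t)^3 > 8t^2$, which holds on $(0,1)$ since $(1+t)^3 - 8t^2 = (1-t)(1 + 4t - t^2)$ and both factors are positive there. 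This completes the proof.

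\textbf{Main obstacle.} The computations are routine; the delicate part is the spectral bookkeeping. One must track, separately for $x_0 \in A$ and $x_0 \in B$, into which eigenspace of $P_AP_BP_A$, $P_BP_AP_B$ and $\frac{P_A+P_B}{2}$ each projected component of $x_0$ falls, and keep the powers of $\lambda_i$ consistent when passing between the parametrizations by $P_{\mathcal N(\lambda_i - P_AP_BP_A)}(x_0)$ and $P_{\mathcal N(\lambda_i - P_BP_AP_B)}(x_0)$, whose squared norms differ by a factor $\lambda_i$ and hence shift the relevant exponent between $2k$ and $2k-1$. Once the two expansions are assembled, the substance of the theorem is simply that the MSP error permanently carries a nonzero $\mu_i^-$-tail on top of a $\mu_i^+$-term that already dominates the MAP error --- the content of the scalar inequality above.
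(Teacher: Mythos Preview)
Your argument is correct and follows the same spectral bookkeeping as the paper: you invoke Theorems \ref{thm:main} and \ref{thm:Lambdax} for \eqref{thm:main2:lambdaANDmu}, and for \eqref{thm:main2:xkak} you assemble the spectral expansions of $\|a_k - P_{A\cap B}(a_0)\|^2$ and $\|x_k - P_{A\cap B}(x_0)\|^2$ exactly as the paper does (equations \eqref{pr:main2:ak}--\eqref{pr:main2:xkB}). The only genuine difference is the final scalar step. The paper keeps \emph{both} terms $(\mu_i^+)^{2k+1} + (\mu_i^-)^{2k+1}$ and, writing $t = \mu_i^+$, observes that this equals $t(\mu_i^+)^{2k} + (1-t)(-\mu_i^-)^{2k}$; strict convexity of $s\mapsto s^{2k}$ then yields the bound $> (t\mu_i^+ - (1-t)\mu_i^-)^{2k} = (2t-1)^{2k} = \lambda_i^k$, which already dominates $\lambda_i^{2k-1}$ and $\lambda_i^{2k}$. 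You instead discard the nonnegative $(\mu_i^-)^{2k+1}$ contribution and prove the one-term inequality $(\mu_i^+)^{2k+1} > \lambda_i^{2k-1}$ directly, via monotonicity in $k$ and the factorization $(1+t)^3 - 8t^2 = (1-t)(1+4t-t^2)$ for the base case. Both routes are elementary; the paper's convexity trick is more conceptual and gives the stronger intermediate bound $\lambda_i^k$, while yours is a direct verification that requires a bit of calculus. (The remark that ``the two error terms agree at $k=0$'' plays no role in your argument and could be omitted.)
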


\begin{proof}
  The second part of the theorem, that is, equality \eqref{thm:main2:lambdaANDmu}, follows directly from Theorem \ref{thm:Lambdax} and Theorem \ref{thm:main}. We now turn our attention to inequality \eqref{thm:main2:xkak}. We borrow the notation from the proof of Theorem \ref{thm:Lambdax} with $x := a_0 = x_0$.

  We assume that $x \in A$. Then, by \eqref{pr:Lambdax:xA}, we have
  \begin{equation} \label{pr:main2:xA}
    x = w_{\frac 1 2} + \sum_{i=1}^m (\underbrace{w_i^- + w_i^+}_{u_{\lambda_i}}) + w_1,
  \end{equation}
  with $w_i^+$ and $w_i^-$ defined as in \eqref{pr:Lmabdax:ui},  and where $w_{\frac 1 2} \in A \cap B^\perp$.  Noticing that $a_k = (P_AP_BP_A)^k(x)$, $k = 1,2,\ldots,$ and $P_{A \cap B}(a_0) = w_1$, and using the orthogonality of the eigenvectors $u_{\lambda_i}$,  for each $k = 1,2, \ldots$,  we have
  \begin{equation}\label{pr:main2:ak}
    \|a_k - P_{A \cap B}(a_0)\|^2 = \sum_{i=1}^{m} \lambda_i^{2k} \|u_{\lambda_i}\|^2.
  \end{equation}
  On the other hand, by \eqref{lem:orthogonality:3}, we get $\|w_i^\pm\|^2 = \mu_i^\pm \|u_{\lambda_i}\|^2$, $i = 1, \ldots, m$. Consequently, thanks to the orthogonality of the eigenvectors $w_i^\pm$,  for each $k = 1,2, \ldots$,  we get
  \begin{align}\label{pr:main2:xk} \nonumber
    \|x_k - P_{A \cap B}(x_0)\|^2 & = {\textstyle(\frac 1 2)^{2k}}\|w_{\frac 1 2}\|^2 + \sum_{i=1}^{m} \left((\mu_i^-)^{2k} \|w_i^-\|^2 + (\mu_i^+)^{2k} \|w_i^+\|^2 \right)\\
    & = {\textstyle(\frac 1 2)^{2k}}\|w_{\frac 1 2}\|^2 + \sum_{i=1}^{m} \left((\mu_i^-)^{2k+1} + (\mu_i^+)^{2k+1} \right) \|u_{\lambda_i}\|^2.
  \end{align}
  Observe that for $t := \mu_i^+ \in (\frac 1 2, 1)$, we have $\mu_i^- = 1-t$ and $ \sqrt{\lambda_i} = \mu_i^+ - \mu_i^-  = 2t-1$. Thus, using the  strict convexity  of $s \mapsto s^{2k}$, we get
  \begin{align}\label{pr:main2:ineqMu} \nonumber
    (\mu_i^+)^{2k+1} + (\mu_i^-)^{2k+1}
    & = t(\mu_i^+)^{2k} + (1-t)(-\mu_i^-)^{2k}
     > (t \mu_i^+ + (1-t) (-\mu_i^-))^{2k}  \\
    & = (t^2 - (1-t)^2)^{2k}
    = (2t - 1)^{2k}
    = \lambda_i^k > \lambda_i^{2k}.
  \end{align}
  By combining \eqref{pr:main2:ak} and \eqref{pr:main2:xk} with \eqref{pr:main2:ineqMu}, we arrive at \eqref{thm:main2:xkak}.

  \bigskip
  We now assume that $x \in B$. By \eqref{pr:Lambdax:xB}, we can again write
  \begin{equation} \label{pr:main2:xB}
    x = w_{\frac 1 2} + \sum_{i=1}^m (\underbrace{w_i^- + w_i^+}_{v_{\lambda_i}}) + w_1,
  \end{equation}
  where this time $w_i^+$ and $w_i^-$ are defined as in \eqref{pr:Lmabdax:vi},  and where $w_{\frac 1 2} \in A^\perp \cap B$. The main difference  from the previous case is that now  $a_k =  P_A(P_BP_AP_B)^{k-1}P_B(x) $, $k = 1,2,\ldots$,  as in the proof of Theorem \ref{thm:main}.  By repeating the argument used for deriving \eqref{pr:main:ak}  (the orthogonality of the eigenvectors $P_A(v_{\lambda_i})$ of $P_AP_BP_A$  and $\|P_A(v_{\lambda_i})\| = \sqrt{\lambda_i} \|v_{\lambda_i}\|$),  we get
  \begin{equation}\label{pr:main2:akB}
    \|a_k - P_{A \cap B}(a_0)\|^2 = \sum_{i=1}^{m} \lambda_i^{2(k-1)} \|P_A(v_{\lambda_i})\|^2
    = \sum_{i=1}^{m} \lambda_i^{2k-1} \|v_{\lambda_i}\|^2;
  \end{equation}
  Moreover, by adjusting \eqref{lem:orthogonality:3} to the eigenvectors $v_{\lambda_i}$ of $P_BP_AP_B$, we obtain $\|w_i^\pm\|^2 = \mu_i^\pm \|v_{\lambda_i}\|^2$ and
  \begin{equation} \label{pr:main2:xkB}
    \|x_k - P_{A \cap B}(x_0)\|^2
     = {\textstyle(\frac 1 2)^{2k}}\|w_{\frac 1 2}\|^2 + \sum_{i=1}^{m} \left((\mu_i^-)^{2k+1} + (\mu_i^+)^{2k+1} \right) \|v_{\lambda_i}\|^2;
  \end{equation}
  compare with \eqref{pr:main2:xk}. By \eqref{pr:main2:ineqMu}, we have $(\mu_i^+)^{2k+1} + (\mu_i^-)^{2k+1}  > \lambda_i^k \geq  \lambda_i^{2k-1}$, which, when combined with \eqref{pr:main2:akB} and \eqref{pr:main2:xkB}, yields \eqref{thm:main2:xkak}.
\end{proof}

\subsection{A Word on Compactness}
It might be a natural approach to extend the results of  Theorems \ref{thm:main} and \ref{thm:main2}  assuming that  $A$ and $B$ are both closed linear subspaces of a Hilbert space $\mathcal H$ and that both operators $P_BP_AP_B$ and $\frac{P_A+P_B}2$ are compact.  However, if $P_BP_AP_B$ were compact with an infinite number of eigenvalues, say, $\Lambda(P_BP_AP_B)\setminus \{0\} = \{\lambda_1 > \lambda_2 > \ldots \}$, then, by repeating the argument in the proof of Lemma \ref{lem:proj3}, we would arrive at $\mu_n := \frac 1 2 + \frac 1 2 \sqrt{\lambda _n} \in \Lambda(\frac{P_A+P_B}2) \setminus\{0\}$. Noting that $\lambda_n \to 0$, we would also have $\mu_n \to \frac 1 2$. This would imply that $\frac{P_A+P_B}2$ cannot be compact as zero is the only possible accumulation point for the spectrum. In fact, we have the following lemma.

\begin{lemma}[Lack of Compactness] \label{ex:TisNotCompact}
  Let $A$ and $B$ be closed linear subspaces of $\mathcal H$. Assume that one of the subspaces among $A$ and $B$ is of finite dimension while the second one has infinite dimension. Then the symmetric alternating projection $S := P_BP_AP_B$ is compact (since $\dim \mathcal R(S) < \infty$) while the simultaneous projection $T := \frac{P_A+P_B}{2}$ is not compact.
\end{lemma}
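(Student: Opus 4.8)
The plan is to handle the two assertions separately, treating the compactness of $S$ as routine and the non-compactness of $T$ as the substantive point. Throughout I would assume, without loss of generality, that $\dim A < \infty$ and $\dim B = \infty$; the opposite configuration follows by interchanging the roles of $A$ and $B$. For $S = P_BP_AP_B$: since $\mathcal R(S) \subseteq P_B(\mathcal R(P_A)) = P_B(A)$, we get $\dim \mathcal R(S) \le \dim A < \infty$, so $S$ is a bounded finite-rank operator and hence compact; the same bound, via $\mathcal R(S)\subseteq \mathcal R(P_B)=B$, covers the case in which $B$ is the finite-dimensional subspace.

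For $T = \frac{P_A+P_B}{2}$, the plan is to exhibit an infinite-dimensional eigenspace at the eigenvalue $\tfrac12$, which is incompatible with compactness. First I would observe that the kernel of the bounded map $P_A|_B \colon B \to A$ equals $B \cap A^\perp$, while its range is contained in the finite-dimensional space $A$; consequently $\dim\big(B/(B\cap A^\perp)\big) \le \dim A < \infty$, and since $\dim B = \infty$ this forces $\dim(B \cap A^\perp) = \infty$. Next, for every $x \in B \cap A^\perp$ one has $P_A(x) = 0$ and $P_B(x) = x$, whence $T(x) = \tfrac12 x$; thus the whole infinite-dimensional space $B \cap A^\perp$ lies in the eigenspace $\mathcal N(\tfrac12 - T)$. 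Finally, picking an orthonormal sequence $\{e_n\}_{n=1}^\infty$ in $B \cap A^\perp$, this sequence is bounded while $\{Te_n\} = \{\tfrac12 e_n\}$ satisfies $\|Te_n - Te_m\| = 1/\sqrt2$ for $n \ne m$ and therefore has no convergent subsequence; hence $T$ is not compact.

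The only step that requires a moment's attention is the dimension count showing that $B \cap A^\perp$ is infinite-dimensional, and even that is elementary linear algebra; everything else is immediate. As an alternative to the orthonormal-sequence argument, one could instead invoke the Riesz--Schauder theory, by which every nonzero eigenvalue of a compact operator has a finite-dimensional eigenspace, but the direct argument keeps the proof self-contained.
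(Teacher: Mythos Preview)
Your proof is correct. Your treatment of the compactness of $S$ matches the paper's parenthetical remark, and your non-compactness argument for $T$ is sound: the rank--nullity step establishing $\dim(B\cap A^\perp)=\infty$ is valid, and the orthonormal-sequence argument (or the Riesz--Schauder alternative you mention) cleanly finishes the job.

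The paper takes a different, shorter route for the non-compactness of $T$. With the roles reversed ($\dim A=\infty$, $\dim B<\infty$), it first notes that $P_A$ cannot be compact because its range $A$ is a closed infinite-dimensional subspace (invoking the fact that the range of a compact operator contains no such subspace). Then it simply observes the algebraic identity $P_A = 2T - P_B$: since $P_B$ has finite rank, compactness of $T$ would force $P_A$ to be compact, a contradiction. Your approach is more constructive and ties in nicely with Lemma~\ref{lem:EigenSet12:T} of the paper, which identifies $\mathcal N(\tfrac12 - T) = (A^\perp\cap B)\oplus(A\cap B^\perp)$; you are effectively showing one of these summands is already infinite-dimensional. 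The paper's approach, by contrast, avoids any spectral computation and reduces the question to the stability of compactness under sums with finite-rank operators. Both arguments are elementary; the paper's is a one-liner, while yours gives a bit more structural insight into why $T$ fails to be compact.
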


\begin{proof}
   Assume that $\dim A = \infty$ and $\dim B < \infty$. Observe that $P_A$ is not compact. Indeed, recall that the range of a compact operator contains no closed infinite-dimensional subspace; see, for example, \cite[Theorem 2.5]{FillmoreWilliams1971}. Moreover, in our case $\mathcal R(P_A) = A$. Thus $P_A$ is not compact, as claimed. Consequently, if $T$ were compact, so would $2T - P_B = P_A$ be - a contradiction.
\end{proof}

This implies that in the general infinite-dimensional case,  for the proofs of Theorems \ref{thm:main} and \ref{thm:main2},  we have to follow a different path, which will be presented in a sequel to the present paper.

\section*{Acknowledgement}
\addcontentsline{toc}{section}{Acknowledgement}
We are grateful to two anonymous referees for their close reading of our manuscript, and for their detailed comments and constructive suggestions.

\section*{Funding}
\addcontentsline{toc}{section}{Funding}
This work was partially supported by the Israel Science Foundation (Grants 389/12 and 820/17), the Fund for the Promotion of Research at the Technion (Grant 2001893) and by the Technion General Research Fund (Grant 2016723).

\section*{Appendix A} \label{sec:AppendixA}
\addcontentsline{toc}{section}{Appendix A}

\begin{proof}[Proof of Lemma \ref{lem:principal0}]
  Note that if $A \cap B \neq \{0\}$, then $\cos(\theta_D) = 1 = \|P_AP_B\|$. Moreover, if $A \cap B = \{0\}$, then, thanks to \eqref{int:KW88:eq}, we again have $\cos(\theta_D) = \cos(\theta_F) = \|P_AP_B\| $. Clearly, $\|P_AP_B\|^2 = \|(P_AP_B)^*(P_AP_B)\| = \lambda^*$. This shows the equality $\cos(\theta_D) = \sqrt{\lambda^*}$.

  Suppose that the unit vectors $u^* \in A$ and $v^* \in B$ satisfy $\langle u^*, v^*\rangle = \sqrt{\lambda^*}$. If $\lambda^* = 0$, then, for all $u \in A$, $v \in B$, we get $|\langle u^*, v\rangle| = |\langle u, v^*\rangle| \leq \sqrt{\lambda^*} = 0$. Thus, $u^* \in B^\perp$ and $v^* \in A^\perp$. In particular, equality \eqref{lem:principal0:reciprocal} holds. We may thus assume that $\lambda^* > 0$. Then,
  \begin{equation}\label{}
    \sqrt{\lambda^*} = \langle u^*, v^*\rangle
    \leq \max_{u \in A,\ \|u\| = 1} \langle u, v^*\rangle
    \leq \cos(\theta_D) = \sqrt{\lambda^*}.
  \end{equation}
  Moreover, using the Riesz representation theorem, we see that
  \begin{equation}\label{}
    \|P_A(v^*)\| = \max_{\|u\| = 1} \langle u, P_A(v^*)\rangle
    = \max_{\|u\| = 1} \langle P_A(u), v^*\rangle
    = \max_{u \in A,\ \|u\| = 1} \langle u, v^*\rangle.
  \end{equation}
  Consequently, $\|P_A(v^*)\| = \sqrt{\lambda^*}$. Using the equality $\langle u^*, P_A(v^*)\rangle = \langle u^*, v^*\rangle = \sqrt{\lambda^*}$, we obtain
  \begin{equation}\label{}
    \|P_A(v^*) - \sqrt{\lambda^*} u^*\|^2 = \|P_A(v^*)\|^2 - 2 \sqrt{\lambda^*} \langle u^*, P_A(v^*)\rangle + \lambda^*\|u^*\|^2 = 0,
  \end{equation}
  which shows $P_A(v^*) = \sqrt{\lambda^*} u^*$. Similarly we can show that $P_B(u^*) = \sqrt{\lambda^*} v^*$.
\end{proof}

\begin{proof}[Proof of Lemma \ref{lem:principal}.]
  Let $U_n := \spn\{u_0, \ldots, u_n\}$ and $V_n := \spn\{v_0, \ldots, v_n\}$, $n = 0, 1 ,\ldots,p$. Also, put $A_n := A \cap U_{n-1}^\perp$ and $B_n := B \cap V_{n-1}^\perp$, $n = 1, \ldots, p$. Then \eqref{def:principal:cos} can be written as $\langle u_n, v_n \rangle = \cos(\theta_D(A_n, B_n))$. Moreover, thanks to Lemma \ref{lem:principal0}, we know that $\cos(\theta_D(A_n, B_n) = \sqrt{\lambda_n}$, where $\lambda_n := \max \Lambda(P_{B_n}P_{A_n}P_{B_n})$, $n = 1, \ldots, p$.

  \bigskip
  \emph{\underline{Step 1.}}
  Assume that (i) holds. Thanks to Lemma \ref{lem:principal0}, we obtain $P_{A_n}(v_n) = \sqrt{\lambda_n} u_n$ and $P_{B_n}(u_n) = \sqrt{\lambda_n} v_n$. Observe that $A_1 = A$ and $B_1 = B$ which shows \eqref{lem:principal:reciprocal} for $n = 1$. Suppose now that \eqref{lem:principal:reciprocal} holds for $u_i$ and $v_i$, $i = 1, \ldots, n-1$, $n \leq p$. Then, using mathematical induction, we have
  \begin{equation}\label{pr:principal:viun}
    \langle v_i, u_n \rangle = \langle v_i, P_A(u_n) \rangle = \langle P_A(v_i), u_n \rangle
    = \sqrt{\lambda_i} \langle u_i, u_n \rangle = 0,
  \end{equation}
  which shows that $u_n \in V_{n-1}^\perp$. Noting that $V_{n-1} \subset B$ and using Fact \ref{fact:commutingProj}, we get $P_{B_n} = P_B P_{V_{n-1}^\perp}$. Consequently, $P_B(u_n) = P_B P_{V_{n-1}^\perp}(u_n) = P_{B_n}(u_n)$. Analogously, we can show that $P_A(v_n) = P_AP_{V_{n-1}^\perp}(v_n) = P_{A_n}(v_n)$. This yields \eqref{lem:principal:reciprocal} for $n = 1, \ldots, p$.

  We now show that $\lambda_n > 0$ for $n = 1, \ldots, r$. Indeed, since $n \leq r$, we can always choose an eigenvalue $\lambda >0$ and a corresponding unit vector $v \in \mathcal N(\lambda - P_BP_AP_B)$ such that $v \in V_{n-1}^\perp$. Then
  \begin{equation}\label{}
    \|P_A(v)\|^2 = \|P_AP_B(v)\|^2
    = \langle P_BP_AP_B(v), v\rangle
    = \lambda > 0
  \end{equation}
  and we can define $u := \frac{P_A(v)}{\|P_A(v)\|}$. Moreover, by \eqref{lem:principal:reciprocal}, for each $i = 1, \ldots, n-1$, we get
  \begin{equation}\label{}
    \langle u_i, u \rangle = \frac{\langle u_i, P_A(v)\rangle}{\|P_A(v)\|}
    = \frac{\langle u_i, u\rangle}{\sqrt{\lambda}}
    = \frac{\langle u_i, P_B(v)\rangle}{\sqrt{\lambda}}
    = \frac{\langle P_B(u_i), v\rangle}{\sqrt{\lambda}}
    = \frac{\sqrt{\lambda_i}}{\sqrt{\lambda}}\langle v_i, v\rangle = 0,
  \end{equation}
  which shows that $u \in U_{n-1}^\perp$. Consequently, $u \in A_n$, $v \in B_n$ and $\sqrt{\lambda_n} \geq \langle u, v\rangle = \|P_A(v)\| > 0$.

  Observe that we also have $\lambda_n = 0$ for $n = r+1, \ldots p$. Otherwise, because of \eqref{lem:principal:reciprocal}, the vector $v_n$ would become an eigenvector of $P_BP_AP_B$ that is orthogonal to $V_r$ and so $\rank (P_BP_AP_B) \geq r+1$ -- a contradiction.

  \bigskip
  \emph{\underline{Step 2.}}
  Assume that (ii) holds. In view of Lemma \ref{lem:principal0}, it suffices to show that $\Lambda_\#(P_{B_n}P_{A_n}P_{B_n}) \setminus \{0\} = \{\lambda_n, \ldots,\lambda_r\}$.
  Indeed, similarly to \eqref{pr:principal:viun}, for each $i \neq j$, we get $\langle v_i, u_j\rangle = \sqrt{\lambda_i} \langle u_i, u_j \rangle = 0$. Thus for $i \geq n$, we have $u_i \in V_{n-1}^\perp$ and $v_i \in U_{n-1}^\perp$. Consequently, $P_{A_n}(v_i) = \sqrt{\lambda_i}u_i$ and $P_{B_n}(u_i) = \sqrt{\lambda_i} v_i$. In particular, $v_i \in \mathcal N(\lambda_i - P_{B_n}P_{A_n}P_{B_n})$ for $i = n, \ldots, r$ and $\{\lambda_n, \ldots,\lambda_r\} \subset \Lambda_\#(P_{B_n}P_{A_n}P_{B_n}) \setminus \{0\}$.

  Suppose there is a unit vector $v \in \mathcal N(\lambda - P_{B_n}P_{A_n}P_{B_n})$ for some $\lambda > 0$ that is orthogonal to $v_n, \ldots, v_r$. Note that $v$ is also orthogonal to $v_1, \ldots, v_{n-1}$ because $v \in B_n$. By \eqref{lem:principal:reciprocal}, for each $i = 1,\ldots,r$, we have
  \begin{equation}\label{}
    \langle v, u_i \rangle = \langle P_B(v), u_i \rangle = \langle v, P_B(u_i) \rangle
    = \sqrt{\lambda_i} \langle v, v_i \rangle = 0.
  \end{equation}
  Consequently, $v \in U_{n-1}^\perp$ and $P_A(v) = P_{A_n}(v)$. Similarly, for $u := \frac{P_A(v)}{\|P_A(v)\|}$, we have $\|P_A(v)\| = \sqrt{\lambda}$ and
  \begin{equation}\label{}
    \langle u, v_i \rangle = \frac{\langle P_A(v), u_i \rangle}{\sqrt{\lambda}} = \frac{\langle v, u_i \rangle}{\sqrt{\lambda}} = 0,
  \end{equation}
  $i = 1,\ldots,r$. Thus $u \in V_{n-1}^\perp$ and $P_B(u) = P_{B_n}(u)$. Consequently, we obtain
  \begin{align}\label{} \nonumber
    P_BP_AP_B(v) & = P_BP_A(v) = \sqrt{\lambda} P_B(u) = \sqrt{\lambda} P_{B_n}(u) \\
    & = P_{B_n}P_A(v) = P_{B_n}P_{A_n}(v) = P_{B_n}P_{A_n}P_{B_n}(v) = \lambda v,
  \end{align}
  that is, $v$ is an eigenvector of $P_BP_AP_B$ which is orthogonal to $v_1,\ldots,v_r$. This implies that $\rank(P_BP_AP_B) \geq r+1$ -- a contradiction with our assumption. Consequently we must have $\Lambda_\#(P_{B_n}P_{A_n}P_{B_n}) \setminus \{0\} = \{\lambda_n, \ldots,\lambda_r\}$, as asserted.
\end{proof}

\small

\begin{thebibliography}{10}
\addcontentsline{toc}{section}{References}

\bibitem{Afriat1957}
{\sc S.~N. Afriat}, {\em Orthogonal and oblique projectors and the
  characteristics of pairs of vector spaces}, Proc. Cambridge Philos. Soc., 53
  (1957), pp.~800--816.

\bibitem{ArtachoCampoy2019}
{\sc F.~J. Arag\'{o}n~Artacho and R.~Campoy}, {\em Optimal rates of linear
  convergence of the averaged alternating modified reflections method for two
  subspaces}, Numer. Algorithms, 82 (2019), pp.~397--421.

\bibitem{BadeaGrivauxMuller2011}
{\sc C.~Badea, S.~Grivaux, and V.~M\"uller}, {\em The rate of convergence in
  the method of alternating projections}, Algebra i Analiz, 23 (2011),
  pp.~1--30.

\bibitem{BargetzKlemencReichSkorokhod2020}
{\sc C.~Bargetz, J.~Klemenc, S.~Reich, and N.~Skorokhod}, {\em On angles,
  projections and iterations}, Linear Algebra Appl., 603 (2020), pp.~41--56.

\bibitem{BargetzLuggin2023}
{\sc C.~Bargetz and F.~Luggin}, {\em Observations on the metric projection in
  finite dimensional Banach spaces}, 2023,
  \url{https://arxiv.org/abs/2305.19908}.

\bibitem{BauschkeCruzNghiaPhanWang2014}
{\sc H.~H. Bauschke, J.~Y. Bello~Cruz, T.~T.~A. Nghia, H.~M. Phan, and
  X.~Wang}, {\em The rate of linear convergence of the {D}ouglas-{R}achford
  algorithm for subspaces is the cosine of the {F}riedrichs angle}, J. Approx.
  Theory, 185 (2014), pp.~63--79.

\bibitem{BauschkeCruzNghiaPhanWang2016}
{\sc H.~H. Bauschke, J.~Y. Bello~Cruz, T.~T.~A. Nghia, H.~M. Phan, and
  X.~Wang}, {\em Optimal rates of linear convergence of relaxed alternating
  projections and generalized {D}ouglas-{R}achford methods for two subspaces},
  Numer. Algorithms, 73 (2016), pp.~33--76.

\bibitem{BauschkeBorwein1996}
{\sc H.~H. Bauschke and J.~M. Borwein}, {\em On projection algorithms for
  solving convex feasibility problems}, SIAM Rev., 38 (1996), pp.~367--426.

\bibitem{BauschkeBorweinLewis1997}
{\sc H.~H. Bauschke, J.~M. Borwein, and A.~S. Lewis}, {\em The method of cyclic
  projections for closed convex sets in {H}ilbert space}, in Recent
  developments in optimization theory and nonlinear analysis ({J}erusalem,
  1995), vol.~204 of Contemp. Math., Amer. Math. Soc., Providence, RI, 1997,
  pp.~1--38.

\bibitem{BauschkeCombettes2017}
{\sc H.~H. Bauschke and P.~L. Combettes}, {\em Convex Analysis and Monotone
  Operator Theory in {H}ilbert Spaces}, CMS Books in Mathematics/Ouvrages de
  Math\'{e}matiques de la SMC, Springer, Cham, second~ed., 2017.
\newblock With a foreword by H\'{e}dy Attouch.

\bibitem{BauschkeDeutschHundal2009}
{\sc H.~H. Bauschke, F.~Deutsch, and H.~Hundal}, {\em Characterizing
  arbitrarily slow convergence in the method of alternating projections}, Int.
  Trans. Oper. Res., 16 (2009), pp.~413--425.

\bibitem{BauschkeDeutschHundalPark2003}
{\sc H.~H. Bauschke, F.~Deutsch, H.~Hundal, and S.-H. Park}, {\em Accelerating
  the convergence of the method of alternating projections}, Trans. Amer. Math.
  Soc., 355 (2003), pp.~3433--3461.

\bibitem{BauschkeLalXianfu2023}
{\sc H.~H. Bauschke, M.~Krishan~Lal, and X.~Wang}, {\em Directional asymptotics
  of {F}ej\'{e}r monotone sequences}, Optim. Lett., 17 (2023), pp.~531--544.

\bibitem{BehlingCruzSantos2018}
{\sc R.~Behling, J.~Y. Bello~Cruz, and L.-R. Santos}, {\em Circumcentering the
  {D}ouglas-{R}achford method}, Numer. Algorithms, 78 (2018), pp.~759--776.

\bibitem{BjorckGolub1973}
{\sc A.~Bj\"{o}rck and G.~H. Golub}, {\em Numerical methods for computing
  angles between linear subspaces}, Math. Comp., 27 (1973), pp.~579--594.

\bibitem{BjorstadMandel1991}
{\sc P.~E. Bj{\o}rstad and J.~Mandel}, {\em On the spectra of sums of
  orthogonal projections with applications to parallel computing}, BIT, 31
  (1991), pp.~76--88.

\bibitem{BorodinKopecka2020}
{\sc P.~A. Borodin and E.~Kopeck\'{a}}, {\em Alternating projections, remotest
  projections, and greedy approximation}, J. Approx. Theory, 260 (2020),
  105486, 16 pages.

\bibitem{BottcherSpitkovsky2010}
{\sc A.~B\"{o}ttcher and I.~M. Spitkovsky}, {\em A gentle guide to the basics
  of two projections theory}, Linear Algebra Appl., 432 (2010), pp.~1412--1459.

\bibitem{Cegielski2012}
{\sc A.~Cegielski}, {\em Iterative Methods for Fixed Point Problems in
  {H}ilbert Spaces}, vol.~2057 of Lecture Notes in Mathematics, Springer,
  Heidelberg, 2012.

\bibitem{CensorCegielski2015}
{\sc Y.~Censor and A.~Cegielski}, {\em Projection methods: an annotated
  bibliography of books and reviews}, Optimization, 64 (2015), pp.~2343--2358.

\bibitem{Deutsch2001}
{\sc F.~Deutsch}, {\em Best Approximation in Inner Product Spaces}, vol.~7 of
  CMS Books in Mathematics/Ouvrages de Math\'ematiques de la SMC,
  Springer-Verlag, New York, 2001.

\bibitem{DeutschHundal2015}
{\sc F.~Deutsch and H.~Hundal}, {\em Arbitrarily slow convergence of sequences
  of linear operators}, in Infinite products of operators and their
  applications, vol.~636 of Contemp. Math., Amer. Math. Soc., Providence, RI,
  2015, pp.~93--120.

\bibitem{FaltGiselsson2017}
{\sc M.~F\"{a}lt and P.~Giselsson}, {\em Optimal convergence rates for
  generalized alternating projections}, in 2017 IEEE 56th Annual Conference on
  Decision and Control (CDC), 2017, pp.~2268--2274.

\bibitem{FillmoreWilliams1971}
{\sc P. A.~Fillmore and J. P.~Williams}, {\em On operator ranges}, Advances in Math., 7 (1971), pp.~254--281.

\bibitem{Galantai2004}
{\sc A.~Gal\'{a}ntai}, {\em Projectors and Projection Methods}, vol.~6 of
  Advances in Mathematics (Dordrecht), Kluwer Academic Publishers, Boston, MA,
  2004.

\bibitem{Galantai2008}
{\sc A.~Gal\'{a}ntai}, {\em Subspaces, angles and pairs of orthogonal
  projections}, Linear Multilinear Algebra, 56 (2008), pp.~227--260.

\bibitem{GalantaiHegedus2006}
{\sc A.~Gal\'{a}ntai and C.~J. Heged\H{u}s}, {\em Jordan's principal angles in
  complex vector spaces}, Numer. Linear Algebra Appl., 13 (2006), pp.~589--598.

\bibitem{Halmos1974}
{\sc P.~R. Halmos}, {\em Finite-dimensional Vector Spaces}, Undergraduate Texts
  in Mathematics, Springer-Verlag, New York-Heidelberg, second~ed., 1974.

\bibitem{Halperin1962}
{\sc I.~Halperin}, {\em The product of projection operators}, Acta Sci. Math.
  (Szeged), 23 (1962), pp.~96--99.

\bibitem{KayalarWeinert1988}
{\sc S.~Kayalar and H.~L. Weinert}, {\em Error bounds for the method of
  alternating projections}, Math. Control Signals Systems, 1 (1988),
  pp.~43--59.

\bibitem{KnyazevJujunashviliArgentati2010}
{\sc A.~Knyazev, A.~Jujunashvili, and M.~Argentati}, {\em Angles between
  infinite dimensional subspaces with applications to the {R}ayleigh-{R}itz and
  alternating projectors methods}, J. Funct. Anal., 259 (2010), pp.~1323--1345.

\bibitem{KopeckaReich2004}
{\sc E.~Kopeck\'{a} and S.~Reich}, {\em A note on the von {N}eumann alternating
  projections algorithm}, J. Nonlinear Convex Anal., 5 (2004), pp.~379--386.

\bibitem{Kress2014}
{\sc R.~Kress}, {\em Linear Integral Equations}, vol.~82 of Applied
  Mathematical Sciences, Springer, New York, third~ed., 2014.

\bibitem{Lapidus1981}
{\sc M.~L. Lapidus}, {\em Generalization of the {T}rotter-{L}ie formula},
  Integral Equations Operator Theory, 4 (1981), pp.~366--415.

\bibitem{LukeTeboulleThao2020}
{\sc D. R. Luke, M. Teboulle and N. H. Thao}, {\em Necessary conditions for linear convergence of iterated expansive, set-valued mappings}, Math. Program., 180 (2020), pp.~1--31.

\bibitem{Meyer2000}
{\sc C. D. Meyer}, {\em Matrix Analysis and Applied Linear Algebra.} SIAM, Philadelphia, 2000.

\bibitem{Neumann1949}
{\sc J.~von Neumann}, {\em On rings of operators. {R}eduction theory}, Ann. of
  Math. (2), 50 (1949), pp.~401--485.

\bibitem{Popa2012}
{\sc C.~Popa}, {\em Projection Algorithms  -- Classical Results and Developments}, LAP Lambert Academic Publishing, 2012.

\bibitem{PustylnikReichZaslavski2010}
{\sc E.~Pustylnik, S.~Reich, and A.~J. Zaslavski}, {\em Convergence of infinite
  products of nonexpansive operators in {H}ilbert space}, J. Nonlinear Convex
  Anal., 11 (2010), pp.~461--474.

\bibitem{Reich1983}
{\sc S.~Reich}, {\em A limit theorem for projections}, Linear and Multilinear
  Algebra, 13 (1983), pp.~281--290.

\bibitem{ReichZalas2017}
{\sc S.~Reich and R.~Zalas}, {\em The optimal error bound for the method of
  simultaneous projections}, J. Approx. Theory, 223 (2017), pp.~96--107.

\bibitem{ReichZalas2021}
{\sc S.~Reich and R.~Zalas}, {\em Error bounds for the method of simultaneous
  projections with infinitely many subspaces}, J. Approx. Theory, 272 (2021),
  105648, 24 pages.

\bibitem{ReichZalas2023}
{\sc S.~Reich and R.~Zalas}, {\em Polynomial estimates for the method of cyclic
  projections in {H}ilbert spaces}, Numer. Algorithms,  94 (2023), pp.~1217--1242.

\end{thebibliography}

\end{document}